\documentclass{article}
\usepackage[english]{babel}
\usepackage{graphicx}
\usepackage{textcomp}
\usepackage{amssymb}
\usepackage{amsmath}
\usepackage{amsthm}
\usepackage{setspace}
\usepackage[margin=2.5cm]{geometry}
\usepackage{bm}
\usepackage{centernot}
\usepackage{relsize}
\usepackage{hyperref}
\usepackage{amsmath}
\usepackage{amssymb}
\usepackage{mathtools}
\usepackage{xcolor}
\DeclareFontFamily{U}{mathx}{}
\DeclareFontShape{U}{mathx}{m}{n}{
	<-> mathx10
}{}
\DeclareSymbolFont{mathx}{U}{mathx}{m}{n}
\DeclareMathAccent{\widecheck}{0}{mathx}{"71}
\newtheorem{proposition}{Proposition}

\newtheorem{remark}{Remark}

\newtheorem{lemma}{Lemma}

\makeatletter
\renewcommand*\env@matrix[1][*\c@MaxMatrixCols c]{%
  \hskip -\arraycolsep
  \let\@ifnextchar\new@ifnextchar
  \array{#1}}
\makeatother

\begin{document}

\title{Very weak solutions of the heat equation with anisotropically singular time-dependent diffusivity}
\author{Zhirayr Avetisyan, Zahra Keyshams, Monire Mikaeili Nia, Michael Ruzhansky}
\date{}
\maketitle
\begin{abstract}
We investigate the heat equation with a time-dependent, anisotropic, and potentially singular diffusivity tensor.
Since weak (in the Sobolev sense) or distributional solutions may not exist in this setting, we employ the framework of very weak solutions to establish the existence and uniqueness of solutions to the heat equation with singular, anisotropic, time-dependent diffusivity.
\end{abstract}
\section{Introduction}
The concept of a solution to a partial differential equation (PDE) has evolved remarkably in the last century or more, from the classical solutions which satisfied the equation in the normal sense (i.e., pointwise), to the weak solutions in sense of Sobolev, or the weak solutions in the sense of an optimization problem (e.g., Lagrangian), all the way to the distributional solutions in various spaces of distributions, ultradistributions or even Fourier hyperfunctions. While each of these approaches has its advantages, it appears that none of them is capable of handling PDEs with strongest (e.g., distributional) coefficients. The recently proposed very weak solutions seem to be a promising way of addressing such problems.

The concept of very weak solutions of PDEs with singular (i.e., non-smooth) coefficients goes back to the paper \cite{GaRu15} by C. Garetto and M. Ruzhansky, where the authors considered second-order weakly hyperbolic equations with singular time-dependent coefficients. While weak (in the sense of Sobolev) or distributional solutions may not exist, it is established that a certain regularization scheme, called very weak solution, always produces appropriately unique solutions to such equations. Moreover, in case the equation does admit distributional or ultra-distributional solutions, the very weak solutions coincide with them in the appropriate sense. The latter condition, called consistency, is the main advantage of the method of very weak solutions over another widely used approach to singular PDEs called the Colombeau algebras.

Since then, the subject of very weak solutions has gained much momentum, see \cite{AltybayNEW, ARST21a, ARST21b, ChRT22, Gar20, RY20} for more references. However, to our knowledge the only work where the heat equation with time-dependent singular coefficients is considered is \cite{ARST21}, where the authors address the heat equation with a singular potential. In the present paper, we study the existence and uniqueness of very weak solutions of the heat equation without a potential, but with a time-dependent, fully anisotropic thermal diffusivity tensor, which is allowed to be strongly singular. Sharp, non-gradual (i.e., singular) coefficients in PDEs appear in many physically relevant problems, where the medium consists of more than one phases with a sharp transmission interface. In particular, when a phase transition in the medium happens much faster than the diffusion process under consideration, the diffusivity coefficients can be effectively seen to be suffering discontinuities. Such a rapid phase transition may happen during the crystallization of a supercooled fluid, and if the resulting crystal is anisotropic, then a diffusion process in this medium will be described with the kind of equations we handle in this paper. Another interesting situation where such equations may be relevant is cosmology. Our best understanding of the history of early universe is that it has undergone phase transitions, which have dramatically changed its chemical and thermal properties. Moreover, many models of early universe assume that it was anisotropic, which could result in anisotropic phase transitions. Thus, any diffusion process that has happened in the early universe at cosmic scales may have to be described by the kind of equations discussed in this work. Readers interested in PDE-theoretical and Fourier-analytical aspects of anisotropic cosmological models may consult \cite{AvVe13} and references therein. 

The structure of this paper is as follows: Section 3 develops the classical theory for the considered anisotropic time-dependent heat equation. 
After introducing the fundamental kernel and establishing its structural properties, this section proves the classical existence and uniqueness result for solutions in the appropriate functional framework.
Section 4 shows stability for the solution maps. it is preparing the ground for the analysis of irregular data. 
Section 5 introduces the framework of very weak solutions, based on mollified operator families together with moderateness and negligibility conditions. 
Within this section we establish the main results of the paper: the existence, uniqueness, and consistency theorems for very weak solutions. 
The consistency part shows the relation between very weak and classical solutions in the regular case.

\textbf{Convention.} 
Throughout this paper,  $C$ is used to represent positive constants whose values may change at every occurrence, which are independent of the main involved parameters. We use the notation $A\lesssim_{\mu}B$ to indicate that there exists a constant $C_{\mu}$, depending only on the parameter $\mu$, such that
$A\leq C_{\mu} B.$
When the dependence on $\mu$
is not essential or is clear from the context, we may write $A\lesssim B$ to simplify the notation to mean $A\leq CB$, for a positive constant $C$.
\section{The classical theory}
In this section, we introduce a heat equation characterized by an anisotropic, smooth, time-dependent diffusivity. Unlike the standard heat equation with constant or isotropic diffusion, this model incorporates a diffusion matrix that varies with time and may have different effects in different spatial directions (anisotropy). Here, we present a classical approach to solving this equation. Now, consider the following partial differential equation:
\begin{equation}
\partial_tu(t,x)-\mathcal{L}_tu(t,x)=f(t,x),\label{heatEq}
\end{equation}
where $\{\mathcal{L}_t\}_{t\in\mathbb{R}_+}$ is a smooth family of partial differential operators given by
$$
\mathcal{L}_tv(x)=\sum_{i,j=1}^n a_{ij}(t)\partial^2_{x_ix_j}v(x),\quad\forall x\in\mathbb{R}^n,\quad\forall t\in\mathbb{R}_+,\quad\forall v\in C^\infty(\mathbb{R}^n),\quad n\in\mathbb{N}.
$$
In this section, $a\in C^\infty(\mathbb{R}_+,\mathrm{GL}(n)) \,\cap \, C([0,\infty),\mathrm{GL}(n))$ is a matrix-valued smooth function which is continuous at $t=0$, such that $a(t)$ is a symmetric positive definite matrix for all $t\in[0,\infty)$. We will use the inner product in $\mathbb{R}^n$ to write
$$
\mathcal{L}_t=\langle\partial_x,a(t)\partial_x\rangle,\quad\forall t\in\mathbb{R}_+,
$$
where $\partial_{x}$ stands for the gradient.
Denote
$$
\Delta\doteq\left\{(s,t)\in[0,+\infty)\times\mathbb{R}_+\;\vline\quad s<t\right\},
$$
then its closure in $\mathbb{R}^2$ will be
$$
\overline{\Delta}=\left\{(s,t)\in[0,+\infty)^2\;\vline\quad s\le t\right\}.
$$
Introduce the matrix-valued smooth function $A\in C^\infty(\mathbb{R}_+,\mathrm{GL}(n))$ by
$$
A(t)=\int\limits_0^ta(s)ds,\quad\forall t\in\mathbb{R}_+.
$$
It is clear that $A(t)-A(s)$ is a symmetric positive definite matrix for every $(s,t)\in\Delta$.

Define the function $\mathcal{W}\in C^\infty(\Delta, \mathcal{S}(\mathbb{R}^n)) $ by
$$
\mathcal{W}(x;s,t)\doteq\frac1{\sqrt{(4\pi)^n\det[A(t)-A(s)]}}e^{-\frac14\langle x,[A(t)-A(s)]^{-1}x\rangle},\quad\forall x\in\mathbb{R}^n,\quad\forall(s,t)\in\Delta.
$$
\begin{remark}\label{remarkfourierinverse} We have
\begin{align}\label{fourierinverse}
	\mathcal{W}(x;s,t) = \mathcal{F}^{-1}\left(e^{-\langle[A(t)-A(s)]\cdot, \cdot \rangle}\right)(x)=\widecheck{\left(e^{-\langle[A(t)-A(s)]\cdot, \cdot \rangle}\right)}(x),~~~\forall x\in \mathbb{R}^{n},~~\forall (s,t)\in \Delta,
\end{align}
where $\mathcal{F}^{-1}$ stands for the inverse Fourier transform. This is because, for a symmetric positive definite matrix $B$ and an arbitrary vector $b$ , one can verify through computation that
\begin{align}\label{formulaforinverse}
	\int\limits_{\mathbb{R}^n} e^{-\frac{1}{2}x^{\top}Bx+b^{\top} x} dx = \sqrt{\dfrac{(2\pi)^n}{\det B}} e^{\frac{1}{2}b^{\top}B^{-1}b}.
\end{align}	
	Here, we have
\begin{align*}
	\mathcal{F}^{-1}\left(e^{-\langle[A(t)-A(s)]\cdot, \cdot\rangle} \right)(x)= (2\pi)^{-n}\int\limits_{\mathbb{R}^n} e^{-\frac{1}{2}\xi^{\top}(2[A(t)-A(s)])\xi+i\langle x,\xi\rangle} d\xi,~~~\forall x\in \mathbb{R}^{n},~~~\forall (s,t)\in \Delta.
\end{align*}
We recall that the Euclidean inner product is denoted by $\langle a,b \rangle =a^{\top}b$ for column vectors $a,b\in\mathbb{C}^{n}$.
Therefore, by letting $B=2[A(t)-A(s)]$ and $b^{\top}=ix$ in \eqref{formulaforinverse} where $x\in\mathbb{R}^{n}$ is a column vector, we obtain the result in \eqref{fourierinverse}.
\end{remark}
\begin{proposition}\label{pr1}
The following properties hold:
\begin{itemize}
\item[1.]
$$
\int\limits_{\mathbb{R}^n}\mathcal{W}(x;s,t)dx=1,\quad\forall(s,t)\in\Delta;
$$

\item[2.]
$$
\partial_t\mathcal{W}(x;s,t)-\mathcal{L}_t\mathcal{W}(x;s,t)=0,\quad\forall x\in\mathbb{R}^n,\quad\forall(s,t)\in\Delta;
$$

\item[3.]
$$
\lim_{\epsilon\to0+}\mathcal{W}(\cdot;s,s+\epsilon)=\delta,\quad\forall s\in[0,+\infty);
$$

\item[4.] 
$$
\lim_{\epsilon\to0+}\mathcal{W}(\cdot; t-\epsilon,t)=\delta,\quad\forall t\in(0,+\infty).
$$
\end{itemize}
\end{proposition}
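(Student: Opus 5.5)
I will work throughout with the Fourier representation from Remark~\ref{remarkfourierinverse}, writing $B(s,t)\doteq A(t)-A(s)$, which is symmetric positive definite for $(s,t)\in\Delta$. Thus
$$
\widehat{\mathcal{W}}(\xi;s,t)=e^{-\langle B(s,t)\xi,\xi\rangle}.
$$

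\textbf{Item 1.} Evaluate this at $\xi=0$, which gives $1$. Equivalently, apply \eqref{formulaforinverse} directly with the matrix $\tfrac12 B^{-1}$ and $b=0$; the Gaussian integral $\sqrt{(4\pi)^n\det B}$ exactly cancels the normalising prefactor.

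\textbf{Item 2.} Compute on the Fourier side, where $\mathcal{L}_t$ becomes multiplication by $-\langle a(t)\xi,\xi\rangle$. Since $\partial_t B(s,t)=a(t)$, we get
$$
\partial_t \widehat{\mathcal{W}}=-\langle a(t)\xi,\xi\rangle\,\widehat{\mathcal{W}}.
$$
Differentiation under the inverse Fourier integral is justified because, for $t$ in a compact subset of $(s,\infty)$, $B(s,t)\ge c>0$. Hence the integrand and its $t$-derivative are dominated by $C(1+|\xi|^2)e^{-c|\xi|^2}$, which is integrable, so dominated convergence applies.

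Alternatively, one can argue directly in $x$. Use Jacobi's formula $\partial_t\log\det B=\operatorname{tr}(B^{-1}a)$ and $\partial_t B^{-1}=-B^{-1}aB^{-1}$. Then compare with
$$
\langle\partial_x,a\partial_x\rangle\mathcal{W}=\Bigl(\tfrac14\langle B^{-1}x,aB^{-1}x\rangle-\tfrac12\operatorname{tr}(aB^{-1})\Bigr)\mathcal{W}.
$$
The two sides match term by term.

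\textbf{Items 3 and 4.} The main point is that $B(s,s+\epsilon)\to0$ and $B(t-\epsilon,t)\to0$ as $\epsilon\to0+$. This follows from continuity of $A$ on $[0,\infty)$, since $a$ is continuous at $0$ and hence locally integrable there.

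I will prove convergence in $\mathcal{S}'(\mathbb{R}^n)$ (which also gives $\mathcal{D}'$) by working on the Fourier side. For $\varphi\in\mathcal{S}$,
$$
\langle\mathcal{W}(\cdot;s,t),\varphi\rangle=(2\pi)^{-n}\int e^{-\langle B\xi,\xi\rangle}\,\widehat{\varphi}(-\xi)\,d\xi .
$$
The integrand is bounded by $|\widehat\varphi|\in L^1$ and converges pointwise to $\widehat\varphi(-\xi)$ as $B\to0$. Dominated convergence then gives the limit $(2\pi)^{-n}\int\widehat\varphi(-\xi)\,d\xi=\varphi(0)$.

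For item 4, note that $t-\epsilon\ge0$ for small $\epsilon$, so $(t-\epsilon,t)\in\Delta$ and the same argument applies.

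The main obstacle is essentially bookkeeping rather than a real difficulty. One must check the Fourier convention, so that the pairing identity matches the $(2\pi)^{-n}$ normalisation used in the Remark. For item 2, one must justify differentiating under the integral. Both are handled by the uniform Gaussian domination noted above.
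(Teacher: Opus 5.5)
Your proposal is correct. Items 3 and 4 follow the paper's argument: pair with a Schwartz function, pass to the Fourier side, and apply dominated convergence using $|e^{-\langle B\xi,\xi\rangle}|\le 1$. You also make explicit that $B(s,s+\epsilon)\to 0$ and $B(t-\epsilon,t)\to0$, including at $s=0$ via continuity of $a$ at $0$. The paper uses this only implicitly when it replaces the limit of the exponential by $1$.

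The difference lies in items 1 and 2. The paper works entirely in $x$. It proves item 1 by the substitution $y=[A(t)-A(s)]^{-1/2}x$. It proves item 2 by explicitly computing $\partial^2_{x_ix_j}\mathcal{W}$ and $\partial_t\mathcal{W}$ and matching the trace and quadratic terms, which is precisely your ``alternative''. Your primary route instead reads both facts off the symbol: $\int\mathcal{W}\,dx=\widehat{\mathcal{W}}(0)=1$ and $\partial_t e^{-\langle B\xi,\xi\rangle}=-\langle a(t)\xi,\xi\rangle e^{-\langle B\xi,\xi\rangle}$.

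The Fourier route is shorter and explains why the kernel works: the exponent is a $t$-antiderivative of the symbol of $\mathcal{L}_t$. It does require differentiating under the inverse Fourier integral. You justify this correctly by the lower bound $B(s,t)\ge B(s,t_0)\ge c>0$ for $t\ge t_0>s$, which holds because $t\mapsto B(s,t)$ is monotone in the Loewner order, $a(\tau)$ being positive definite. The paper's computation is purely algebraic and needs no interchange of limits. Its cost is the matrix-calculus identities (Jacobi's formula and $\partial_t B^{-1}=-B^{-1}aB^{-1}$), which the paper uses without naming them.
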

\begin{proof}
In order to prove part 1, consider the Gaussian integral as follows:
$$
\int\limits_{\mathbb{R}^n} e^{-\alpha y^{\top}y} dy = \left(\frac{\pi}{\alpha}\right)^{\frac{n}{2}}, ~~\alpha>0.
$$
Here, we have 
\begin{align}\label{gaussian}
	I &\doteq\int\limits_{\mathbb{R}^n} e^{-\frac{1}{4}\langle x, [A(t)-A(s)]^{-1}x\rangle} dx \notag\\
	&= \int\limits_{\mathbb{R}^n} e^{-\frac{1}{4}\langle [A(t)-A(s)]^{-\frac{1}{2}}x, [A(t)-A(s)]^{-\frac{1}{2}}x\rangle} dx,\quad\forall(s,t)\in\Delta.
\end{align}
Note that $[A(t)-A(s)]^{-\frac{1}{2}}$ exists because $A(t)-A(s)$ is a positive definite matrix. Let $y = [A(t)-A(s)]^{-\frac{1}{2}}x$, therefore, $dx = \det[(A(t)-A(s))^{\frac{1}{2}}] dy$.
Continuing the calculation in \eqref{gaussian}, we have
\begin{align*}
	I &= \int\limits_{\mathbb{R}^n}\sqrt{\det[A(t)-A(s)]}\, e^{-\frac{1}{4} y^Ty} dy  = (4\pi)^{\frac{n}{2}} \sqrt{\det[A(t)-A(s)]},\quad\forall(s,t)\in\Delta.
\end{align*}
Hence,
\begin{align*}
	\int\limits_{\mathbb{R}^n}\mathcal{W}(x;s,t)dx= \frac1{\sqrt{(4\pi)^n\det[A(t)-A(s)]}}\, \mathrm{I} =1,\quad\forall(s,t)\in\Delta.
\end{align*}
To prove part 2, note that
$$
\mathcal{L}_t\mathcal{W}(x;s,t)=\sum_{i,j=1}^n a_{ij}(t)\partial^2_{x_ix_j}\mathcal{W}(x;s,t),\quad\forall x\in\mathbb{R}^n,\quad\forall (s,t)\in\Delta.
$$
Therefore, we need to calculate $\partial^2_{x_ix_j}\mathcal{W}(x;s,t)$.
We have
\begin{align}
	\partial_{x_j}\mathcal{W}(x;s,t) &= \frac{1}{\sqrt{(4\pi)^n\det[A(t)-A(s)]}}  \partial_{x_j}\left(e^{-\frac{1}{4}\langle x, [A(t)-A(s)]^{-1}x\rangle}\right) \notag\\ 
	&= -\frac{1}{4} \frac{1}{\sqrt{(4\pi)^n\det[A(t)-A(s)]}} e^{-\frac{1}{4}\langle x, (A(t)-A(s))^{-1}x\rangle} 2 \left((A(t)-A(s))^{-1}x\right)_{j}\notag\\
	&= -\frac{1}{2}\mathcal{W}(x;s,t)\left((A(t)-A(s))^{-1}x\right)_{j},\quad\forall x\in\mathbb{R}^n,\quad\forall(s,t)\in\Delta.
\end{align}
Then, with a similar calculation,
\begin{align}
	&\partial^2_{x_ix_j}\mathcal{W}(x;s,t) = 
	\partial_{x_i}(\partial_{x_j}\mathcal{W}(x;s,t))= \partial_{x_i}\left(-\frac{1}{2}\mathcal{W}(x;s,t)\left((A(t)-A(s))^{-1}x\right)_{j}\right)\notag\\
	&= \frac{1}{4} \mathcal{W}(x;s,t) \left((A(t)-A(s))^{-1}x\right)_{i}\left((A(t)-A(s))^{-1}x\right)_{j} -\frac{1}{2} \mathcal{W}(x;s,t)\left((A(t)-A(s))^{-1}\right)_{i,j},\\
	&\forall x\in\mathbb{R}^n, \forall(s,t)\in\Delta.\notag
\end{align}
Therefore,
\begin{align}\label{LtW} 
	&\mathcal{L}_t\mathcal{W}(x;s,t)\notag\\
	&= \mathcal{W}(x;s,t)\left(-\frac{1}{2} \sum_{i,j=1}^n a_{i,j}(t)\left([A(t)-A(s)]^{-1}\right)_{i,j}+\frac{1}{4}\sum_{i,j=1}^n a_{i,j}(t)\left([A(t)-A(s)]^{-1}x\right)_{i}\left([A(t)-A(s)]^{-1}x\right)_{j}\right)\notag\\
	&=\mathcal{W}(x;s,t)\left(-\frac{1}{2} \operatorname{Tr}\left(a(t)[A(t)-A(s)]^{-1}\right)+\frac{1}{4}\langle[A(t)-A(s)]^{-1}x, a(t)[A(t)-A(s)]^{-1}x\rangle\right),\\
	&\forall x\in\mathbb{R}^n,\quad \forall(s,t)\in\Delta\notag.
\end{align}
To complete the proof, we need to calculate 
$\partial_t \mathcal{W}(x;s,t),$ for $ \forall x\in\mathbb{R}^n, \forall(s,t)\in\Delta$.
\begin{align}\label{partialtW}
	\partial_t \mathcal{W}(x;s,t)&=\partial_t\left(\frac{1}{\sqrt{(4\pi)^n\det[A(t)-A(s)]}}\right)e^{-\frac{1}{4}\langle x, [A(t)-A(s)]^{-1}x\rangle}\notag\\
	&+\frac{1}{\sqrt{(4\pi)^n\det[A(t)-A(s)]}}\partial_t\left(e^{-\frac{1}{4}\langle x, [A(t)-A(s)]^{-1}x\rangle}\right)\notag\\
	&=-\frac{1}{2\sqrt{(4\pi)^n}}{\det[A(t)-A(s)]^{-\frac{1}{2}}}\operatorname{Tr}\left([A(t)-A(s)]^{-1}a(t)\right)e^{-\frac{1}{4}\langle x, [A(t)-A(s)]^{-1}x\rangle}\notag\\
	&+\frac{1}{4}\frac{1}{\sqrt{(4\pi)^n\det[A(t)-A(s)]}}e^{-\frac{1}{4}\langle x, [A(t)-A(s)]^{-1}x\rangle}\langle x, [A(t)-A(s)]^{-1}a(t)[A(t)-A(s)]^{-1}x\rangle\notag\\
	&=\mathcal{W}(x; s,t)\left(-\frac{1}{2} \operatorname{Tr}\left([A(t)-A(s)]^{-1}a(t)\right)+\frac{1}{4}\langle [A(t)-A(s)]^{-1}x, a(t)[A(t)-A(s)]^{-1}x\rangle\right).
\end{align}
Comparing \eqref{LtW} and \eqref{partialtW} leads us to $$
\partial_t\mathcal{W}(x;s,t)-\mathcal{L}_t\mathcal{W}(x;s,t)=0,\quad\forall x\in\mathbb{R}^n,~ \forall(s,t)\in\Delta.
$$
To prove part $3$, note that for any $s\in [0,\infty)$, any $\epsilon>0$, and any $\phi \in \mathcal{S}(\mathbb{R}^n)$, the function  $\mathcal{W}(\cdot;s,s+\epsilon)$ belongs to $ \mathcal{S}(\mathbb{R}^n)\subset \mathcal{S}'(\mathbb{R}^n)$. Therefore, applying Remark \ref{remarkfourierinverse}, we obtain
\begin{align}
	\lim_{\epsilon\to0+}\mathcal{W}(\cdot;s,s+\epsilon)(\phi) &=\lim_{\epsilon\to0+} \int\limits_{\mathbb{R}^n} \mathcal{W}(x;s,s+\epsilon) \phi(x) dx\notag\\
	&=\lim_{\epsilon\to0+}\int\limits_{\mathbb{R}^n} \widecheck{\left(e^{-\langle [A(s+\epsilon)-A(s)]\xi, \xi \rangle}\right)}(x) \phi(x) \, dx
	\notag\\
	&=\lim_{\epsilon\to0+} \int\limits_{\mathbb{R}^n} \left(e^{-\langle[A(s+\epsilon)-A(s)]\xi, \xi \rangle}\right) \check{\phi}(\xi) d\xi \notag\\
	&=\int\limits_{\mathbb{R}^n} \lim_{\epsilon\to0+} \left(e^{-\langle[A(s+\epsilon)-A(s)]\xi, \xi \rangle}\right) \check{\phi}(\xi) d\xi \notag\\
	&=\int\limits_{\mathbb{R}^n} 1 \check{\phi}(\xi) d\xi= \check{1}(\phi) = \delta(\phi).
\end{align}
Note that by the Dominated Convergence Theorem, we can pass to the limit inside the integral because   $e^{-\langle[A(s+\epsilon)-A(s)]\cdot, \cdot \rangle}$  is a uniformly bounded function in $ \epsilon>0$ and $\forall s\in [0,\infty)$, as well as  $\check{\phi}\in \mathcal{S}(\mathbb{R}^n)\subset L^{1}(\mathbb{R}^n)$.

The proof of part 4 follows similarly to part 3. For every \( t > 0 \) and sufficiently small \( \epsilon > 0 \), we have \( \mathcal{W}(\cdot; t - \epsilon, t) \in \mathcal{S}(\mathbb{R}^n) \subset \mathcal{S}'(\mathbb{R}^n) \). Therefore, for any \( \phi \in \mathcal{S}(\mathbb{R}^n) \), by applying Remark \ref{remarkfourierinverse}, we obtain 
\begin{align}
	\lim_{\epsilon\to0+}\mathcal{W}(\cdot;t-\epsilon,t)(\phi) &=\lim_{\epsilon\to0+} \int\limits_{\mathbb{R}^n} \mathcal{W}(x;t-\epsilon,t) \phi(x) dx\notag\\
	&=\lim_{\epsilon\to0+}\int\limits_{\mathbb{R}^n} \widecheck{\left(e^{-\langle[A(t)-A(t-\epsilon)]\xi, \xi \rangle}\right)}(x) \phi(x) dx \notag\\
	&=\lim_{\epsilon\to0+} \int\limits_{\mathbb{R}^n} \left(e^{-\langle[A(t)-A(t-\epsilon)]\xi, \xi \rangle}\right) \check{\phi}(\xi) d\xi \notag\\
	&=\int\limits_{\mathbb{R}^n} \lim_{\epsilon\to0+} \left(e^{-\langle[A(t)-A(t-\epsilon)]\xi, \xi \rangle}\right) \check{\phi}(\xi) d\xi \notag\\
	&=\check{1}(\phi) = \delta(\phi).
	\end{align}
\end{proof}

Before proceeding, we first collect several remarks concerning the kernel $\mathcal{W}$, which will play an important role throughout the paper.
\begin{remark}\label{partialtimes}

For every $v\in\mathcal{S}(\mathbb{R}^n),~ (s,t)\in\Delta ~ \text{and} ~ \alpha, \beta \in \mathbb{N}_{0}$, we have 
\begin{align}
	\frac{\partial^{\alpha+\beta}}{\partial t^\alpha \partial s^{\beta}}\left(\mathcal{W}(\cdot; s,t)\ast v\right) = \left(	\frac{\partial^{\alpha+\beta}}{\partial t^\alpha \partial s^{\beta}}\mathcal{W}(\cdot; s,t)\right)\ast v,
\end{align}
since $\mathcal{W}\in C^\infty(\Delta, \mathcal{S}(\mathbb{R}^n))$ implies that the mapping 
$$
\Delta\longrightarrow \mathcal{S}(\mathbb{R}^n),
\quad
(s,t)\mapsto \mathcal{W}(\cdot;s,t),
$$ 
belongs to $ C^\infty(\Delta,\mathcal{S}(\mathbb{R}^n))$.
On the other hand, for fixed $v\in \mathcal{S}(\mathbb{R}^n)$, the convolution operator
$$
\mathcal{S}(\mathbb{R}^n)\to \mathcal{S}(\mathbb{R}^n), 
	\quad f \mapsto f * v,
$$
is linear and continuous. Hence, the composition
$$
\Delta\longrightarrow \mathcal{S}(\mathbb{R}^n),
\quad
(s,t)\longmapsto  \mathcal{W}(\cdot;s,t)*v
$$
belongs to $C^\infty(\Delta,\mathcal{S}(\mathbb{R}^n))$.
\end{remark}
\begin{remark}\label{partialtimesx}
For every $v\in\mathcal{S}(\mathbb{R}^n),~ (s,t)\in\Delta$ and $\beta \in \mathbb{N}^n_{0}$ as well as $x\in \mathbb{R}^n$, we have 
\begin{align}
{\partial^\beta_x}\left(\mathcal{W}(\cdot; s,t)\ast v\right)(x) = \left(	{\partial^\beta_x}\mathcal{W}(\cdot; s,t)\right)\ast v(x) = \mathcal{W}(\cdot; s,t)\ast\left({\partial^\beta_x}v\right)(x),
\end{align}
	Since $\mathcal{W}(\cdot;s,t)\in \mathcal{S}(\mathbb{R}^n)$ and $v\in \mathcal{S}(\mathbb{R}^n)$, their convolution belongs to $\mathcal{S}(\mathbb{R}^n)$.
\end{remark}
Here, we include the following lemma specifically related to the proof of Part 5 of Proposition \ref{pr2} stated below.

\begin{lemma}\label{upperboundderivative}
Let  $M$ be an $n \times n$  symmetric, positive definite, real-valued matrix where $n\in\mathbb{N}$. We denote the operator norm $\left\| M \right\|_{op}$ as
\[
\left\| M \right\|_{op} \doteq \sup_{|\xi|=1}|M\xi|,\quad\forall\xi\in\mathbb{R}^n,
\]
where $\left|M\xi \right|=\left( \sum^{n}_{i=1}\left( \sum^{n}_{j=1}M_{ij}\xi_{j}\right) ^{2}\right) ^{\frac{1}{2}}$.
Then, 
\begin{align}\label{moshtagh}
	\left|	\partial^{\gamma}_{\xi} e^{-\langle \xi, M\xi\rangle}\right|&\lesssim_{n,\gamma} ~~e^{ - \langle \xi, M \xi \rangle } (1 + |\xi|)^{|\gamma|} \sum_{\mu=1}^{|\gamma|}\|M\|_{op}^{\mu},\quad \forall \xi\in\mathbb{R}^{n},~~\forall \gamma\in\mathbb{N}^{n}_{0}.
\end{align}
\end{lemma}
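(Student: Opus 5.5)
The plan is to show by induction on $|\gamma|$ that derivatives of $e^{-\langle\xi,M\xi\rangle}$ have polynomial form. Precisely, for every $\gamma\in\mathbb{N}_0^n$,
$$
\partial^\gamma_\xi e^{-\langle\xi,M\xi\rangle}=P_\gamma(\xi)\,e^{-\langle\xi,M\xi\rangle},
$$
where $P_\gamma$ is a polynomial in $\xi$ whose coefficients are polynomials in the entries of $M$. The induction tracks this structure: $P_\gamma$ is a finite sum, with number of terms and combinatorial coefficients depending only on $n$ and $\gamma$, of monomials of the form
$$
c\,\prod_{k=1}^{\mu}(M\xi)_{i_k}\prod_{l=1}^{\nu}M_{p_lq_l},\qquad \mu+2\nu=|\gamma|,\quad \mu,\nu\ge0.
$$
Each such term has total degree $\mu+\nu$ in $M$ and degree $\mu\le|\gamma|$ in $\xi$. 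For the base case, $\partial_{\xi_j}e^{-\langle\xi,M\xi\rangle}=-2(M\xi)_je^{-\langle\xi,M\xi\rangle}$ by symmetry of $M$. For the inductive step, applying $\partial_{\xi_j}$ to a monomial times the exponential gives two kinds of terms. Differentiating the exponential adds a factor $-2(M\xi)_j$, so $\mu$ increases by $1$. Differentiating a factor $(M\xi)_{i_k}$ produces $M_{i_kj}$, so $\mu$ decreases by $1$ and $\nu$ increases by $1$. In both cases $\mu+2\nu$ goes up by exactly one, so the form is preserved.

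Next, I estimate each monomial. Since $|(M\xi)_i|\le|M\xi|\le\|M\|_{op}|\xi|$ and $|M_{pq}|=|\langle e_p,Me_q\rangle|\le\|M\|_{op}$, a monomial is bounded by $|c|\,\|M\|_{op}^{\mu+\nu}|\xi|^{\mu}$. Then $|\xi|^\mu\le(1+|\xi|)^{|\gamma|}$ because $\mu\le|\gamma|$. The exponent $m=\mu+\nu$ satisfies $\lceil|\gamma|/2\rceil\le m\le|\gamma|$, and in particular $1\le m\le|\gamma|$ when $|\gamma|\ge1$. Hence $\|M\|_{op}^{m}\le\sum_{\mu=1}^{|\gamma|}\|M\|_{op}^{\mu}$, and summing over the finitely many monomials gives \eqref{moshtagh} with an implicit constant depending only on $n$ and $\gamma$.

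The case $\gamma=0$ is degenerate: the sum on the right is empty while the left side equals $e^{-\langle\xi,M\xi\rangle}$. I would either exclude it or read the empty sum as $1$, and note this explicitly. The main point requiring care is the bookkeeping in the induction: one has to make sure that every power of $M$ arising is at least $1$ and at most $|\gamma|$. This is exactly what the invariant $\mu+2\nu=|\gamma|$ guarantees. Using the operator norm rather than expanding entries keeps the constants independent of $M$, and that independence is what the later application in Proposition~\ref{pr2} needs.
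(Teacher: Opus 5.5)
Your argument is correct and is essentially the paper's proof: the inductive invariant $\mu+2\nu=|\gamma|$ re-derives, for the quadratic inner function $\langle\xi,M\xi\rangle$, the Fa\`{a} di Bruno expansion the paper invokes, with your $\mu+\nu$ equal to the paper's number of blocks, and the first- and second-order factors bounded by $\|M\|_{op}|\xi|$ and $\|M\|_{op}$ exactly as the paper does. Your caveat about $\gamma=0$ is also justified: the displayed inequality is false there as literally stated (the sum on the right is empty), and the paper's proof tacitly assumes $|\gamma|\ge 1$, which is the only case used in Proposition~\ref{pr2}.
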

\begin{proof}
To prove \eqref{moshtagh},
define functions $k$ and $g$ by
\begin{center}
\begin{minipage}{0.45\textwidth}
	\begin{align*}
		k &: [0, \infty) \to \mathbb{R}_+, \\
		&\quad k(y) \doteq e^{-y},\quad \forall y\in [0,\infty),
	\end{align*}
\end{minipage}
\hfill
\begin{minipage}{0.45\textwidth}
\begin{align*}
	g &: \mathbb{R}^n \to [0, \infty), \\
	&\quad g(\xi) \doteq \langle \xi, M \xi \rangle, \quad \forall \xi \in \mathbb{R}^n.
\end{align*}
\end{minipage}
\end{center}
Now, we use	Fa\`{a} di Bruno's formula, which takes the following form:
\begin{align}\label{faadibruno}
	\partial^{\gamma}_{\xi} k(g(\xi))&=	\sum_{\mu=1}^{|\gamma|} k^{(\mu)}(g(\xi)) \sum_{\substack{\beta_{1}+\ldots+\beta_{\mu}=\gamma \\ \beta_j\neq 0, ~ |\beta_j|\leq 2}} \frac{\gamma!}{\beta_{1}!\ldots\beta_{\mu}!}\prod_{j=1}^{\mu} \partial_{\xi}^{\beta_j}g(\xi),\quad \forall \xi\in\mathbb{R}^{n},~\forall \gamma\in\mathbb{N}^{n}_{0}.
\end{align}
If we suppose $\{e_{k}\}_{k=1}^n$ as the standard basis vector in $\mathbb{R}^{n}$, then we have 
\begin{equation}\label{derivative4}
k^{(\mu)}(g(\xi))=(-1)^{\mu}e^{-\langle \xi, M\xi \rangle},\quad\forall \xi\in\mathbb{R}^{n},
\end{equation} and
\begin{align}\label{dbetajg}
	\partial_{\xi}^{\beta_j}\langle \xi, M\xi \rangle=\begin{cases}
		2(M\xi)_{k}, & {} \beta_j=e_k,\\
		2M_{kl}, & {} \beta_j=e_k+e_l,\\
		0, & {otherwise},
	\end{cases}
\end{align}
where $j\in\{1,\ldots,\mu\}$ and $\mu\in\{1,2,\ldots,|\gamma|\}$ for $\gamma\in\mathbb{N}^{n}_{0}$.
It is worth noting about derivatives of  $g$ 
that 
\begin{align}\label{derivative1}
|\partial_{\xi}^{e_k} g(\xi)| = &\left|\frac{\partial}{\partial \xi_k} g(\xi)\right|= |2(M\xi)_k| = 2 \left|\sum_{j=1}^n M_{kj} \xi_j\right| \leq 2 \|M\|_{op} |\xi|<2 \|M\|_{op} (1+|\xi|),\\
&\forall\xi\in\mathbb{R}^{n},~~\forall k\in \{1,2,\ldots,n\}\notag,
\end{align}
and
\begin{equation}\label{derivative2}
|\partial_{\xi}^{e_k + e_\ell} g(\xi)| = \left|\frac{\partial^2}{\partial \xi_k \partial \xi_\ell} g(\xi)\right| = |2 M_{k\ell}| \leq 2 \|M\|_{op}<2 \|M\|_{op} (1+|\xi|),\quad \forall\xi\in\mathbb{R}^{n},~~\forall k,\ell\in \{1,2,\ldots,n\},
\end{equation}
and its higher derivatives vanish 
\begin{equation}\label{derivative33}\partial_{\xi}^\gamma g(\xi) = 0,\quad \forall\xi\in\mathbb{R}^{n},~~\forall \gamma\in\mathbb{N}^{n}_{0},~~ for ~ |\gamma|>2,
\end{equation}
because $g$ is quadratic.
Hence, since $\partial_{\xi}^\gamma g(\xi) = 0$ for all multi-indices $\gamma\in\mathbb{N}^{n}_{0}$ with $|\gamma| >2$, the only nonzero contributions in the Fa\`{a} di Bruno expansion arise from partitions of the form
$\beta_1+ \ldots+ \beta_{\mu}=\gamma$  where each  $\beta_{j}\in\mathbb{N}^{n}_{0}\setminus\{0\}$, $|\beta_j|\leq 2$, and $\mu\in\{1,2,\ldots,|\gamma|\}$.
On the other hand, considering \eqref{derivative1} and \eqref{derivative2}, we deduce that
\begin{equation}\label{derivative3}
\left| \prod_{j=1}^\mu \partial_{\xi}^{\beta_j} \langle \xi, M \xi \rangle \right| \leq 2^\mu \|M\|_{op}^\mu (1 + |\xi|)^\mu, \quad \forall \mu\in\mathbb{N},~~\forall\xi\in\mathbb{R}^{n},~~\beta_{j}\in\mathbb{N}^{n}_{0}\setminus\{0\},~~ |\beta_j|\leq 2.
\end{equation}
Consequently, considering \eqref{faadibruno}, \eqref{derivative4} and \eqref{derivative3}, we see
\begin{align*}
&\left| \partial_{\xi}^\gamma \left( e^{- \langle \xi, M \xi \rangle } \right) \right|\leq e^{-\langle \xi, M\xi \rangle}\sum_{\mu=1}^{|\gamma|}\left|  (-1)^{\mu} \sum_{\substack{\beta_{1}+\ldots+\beta_{\mu}=\gamma \\ \beta_j\neq 0, ~ |\beta_j|\leq 2}} \frac{\gamma!}{\beta_{1}!,\ldots\beta_{\mu}!}\prod_{j=1}^{\mu} \partial_{\xi}^{\beta_j}g(\xi)\right| \\
&\leq e^{-\langle \xi, M\xi \rangle}\sum_{\mu=1}^{|\gamma|} \sum_{\substack{\beta_{1}+\ldots+\beta_{\mu}=\gamma \\ \beta_j\neq 0, ~ |\beta_j|\leq 2}} \frac{\gamma!}{\beta_{1}!,\ldots\beta_{\mu}!}\prod_{j=1}^{\mu} \left|\partial_{\xi}^{\beta_j}g(\xi)\right|\\
&\leq e^{-\langle \xi, M\xi \rangle}\sum_{\mu=1}^{|\gamma|} \sum_{\substack{\beta_{1}+\ldots+\beta_{\mu}=\gamma \\ \beta_j\neq 0, ~ |\beta_j|\leq 2}} \frac{\gamma!}{\beta_{1}!,\ldots\beta_{\mu}!} 2^{\mu}\|M\|_{op}^{\mu} (1+|\xi|)^{\mu}\\
&=e^{-\langle \xi, M\xi \rangle}\sum_{\mu=1}^{|\gamma|}2^{\mu}\|M\|_{op}^{\mu} (1+|\xi|)^{\mu} \sum_{\substack{\beta_{1}+\ldots+\beta_{\mu}=\gamma \\ \beta_j\neq 0, ~ |\beta_j|\leq 2}} \frac{\gamma!}{\beta_{1}!,\ldots\beta_{\mu}!} \\
&\leq e^{-\langle \xi, M\xi \rangle}\sum_{\mu=1}^{|\gamma|}2^{\mu}\|M\|_{op}^{\mu} (1+|\xi|)^{\mu} \sum_{\substack{\beta_{1}+\ldots+\beta_{\mu}=\gamma \\ \beta_j\neq 0, ~ |\beta_j|\leq 2}} \gamma! \\
& \leq 2^{|\gamma|}\gamma! \left(\frac{n(n+3)}{2}\right)^{|\gamma|} e^{ - \langle \xi, M \xi \rangle } (1 + |\xi|)^{|\gamma|} \sum_{\mu=1}^{|\gamma|}\|M\|_{op}^{\mu}\notag\\
&\lesssim_{n,\gamma}e^{ - \langle \xi, M \xi \rangle } (1 + |\xi|)^{|\gamma|} \sum_{\mu=1}^{|\gamma|}\|M\|_{op}^{\mu},\quad \forall \xi\in\mathbb{R}^{n},~\forall \gamma\in \mathbb{N}^{n}_{0}.
\end{align*}
Thus, the proof is complete.
\end{proof}

Now, consider the family of linear operators $\{\operatorname{W}_{s,t}\}_{(s,t)\in\Delta}$, acting as
$$
\operatorname{W}_{s,t}:\mathcal{S}(\mathbb{R}^n)\to\mathcal{S}(\mathbb{R}^n),\quad\forall(s,t)\in\Delta,
$$

$$
\operatorname{W}_{s,t}v=\mathcal{W}(\cdot;s,t)\ast v,\quad\forall v\in\mathcal{S}(\mathbb{R}^n),\quad\forall(s,t)\in\Delta.
$$

We state Proposition \ref{pr2} concerning this family of operators. Before proceeding with this proposition we bring the following Remark for further use.
\begin{remark}\label{phi}
Fix $(s,t)\in\Delta$ and $\phi\in\mathcal{S}(\mathbb{R}^n)$. Since $\mathcal{W}(\cdot;s,t) \in \mathcal{S}(\mathbb{R}^n)$, the convolution $\mathcal{W}(\cdot;s,t) \ast \phi$ gives a Schwartz function and converges locally uniformly in $x$, together with all derivatives with respect to $x$. Therefore, for every $\alpha, \beta\in\mathbb{N}_0^n$, we have 
\begin{align}\label{c(s(R))}
	&\sup_{x\in \mathbb{R}^n}\left|x^\alpha \partial_x^\beta \operatorname{W}_{s,t}\phi(x)\right| =\sup_{x\in \mathbb{R}^n}\left|x^\alpha \partial_x^\beta (\phi\ast\mathcal{W}(\cdot; s,t)(x))\right|\notag\\
	&=\sup_{x\in \mathbb{R}^n} \left|x^\alpha\right|\left|\partial_x^\beta (\phi\ast\mathcal{W}(\cdot; s,t))(x)\right|=\sup_{x\in \mathbb{R}^n} \left|x^\alpha\right|\left| \int\limits_{\mathbb{R}^n}(\partial_x^\beta\phi(x-y))\mathcal{W}(y; s,t) dy\right| \notag\\
	&\leq \sup_{x \in \mathbb{R}^n} \int\limits_{\mathbb{R}^n} |x|^{|\alpha|} \left|\partial_x^\beta \phi(x-y)\right|\mathcal{W}(y; s,t)dy
	= \sup_{x \in \mathbb{R}^n} \int\limits_{\mathbb{R}^n} |x-y+y|^{|\alpha|} \left|\partial_x^\beta \phi(x-y)\right|\mathcal{W}(y; s,t)dy\notag\\
	&\leq 2^{|\alpha|} \sup_{x \in \mathbb{R}^n} \int\limits_{\mathbb{R}^n} |x-y|^{|\alpha|} \left|\partial_x^\beta \phi(x-y)\right|\mathcal{W}(y; s,t)dy+
	2^{|\alpha|} \sup_{x \in \mathbb{R}^n} \int\limits_{\mathbb{R}^n} |y|^{|\alpha|} \left|\partial_x^\beta \phi(x-y)\right|\mathcal{W}(y; s,t)dy\notag\\
	&\leq 2^{|\alpha|} \sup_{x \in \mathbb{R}^n} \int\limits_{\mathbb{R}^n} \left(1+|x-y|^2\right)^{|\alpha|} \left|\partial_x^\beta \phi(x-y)\right|\mathcal{W}(y; s,t)dy\notag\\
	&+	2^{|\alpha|} \sup_{x \in \mathbb{R}^n} \int\limits_{\mathbb{R}^n} \left|\partial_x^\beta \phi(x-y)\right|\left(1+|y|^2\right)^{|\alpha|}\mathcal{W}(y; s,t)dy, \quad\forall (s,t)\in \Delta.
\end{align}
Now, fix $\alpha, \beta \in \mathbb{N}_{0}^n$, and $\phi \in\mathcal{S}(\mathbb{R}^n)$. Define the following functions: 
\begin{align}\label{g}
	g: \mathbb{R}^n&\longrightarrow \mathbb{R},\notag\\
	g(x)&\doteq\left(1+|x|^2\right)^{|\alpha|} \left|\partial_x^\beta \phi(x)\right|,
\end{align}
and
\begin{align}\label{h}
	h: \mathbb{R}^n&\longrightarrow \mathbb{R},\notag\\
	h(x)&\doteq \left(1+|x|^2\right)^{|\alpha|}\mathcal{W}(x; s,t),\quad \forall(s,t)\in \Delta.
\end{align}
Using \eqref{g} and \eqref{h} in \eqref{c(s(R))}, we have
\begin{align}\label{convolution}
	\sup_{x\in \mathbb{R}^n} \left|x^\alpha \partial_x^\beta \operatorname{W}_{s,t}\phi(x)\right| &\leq 2^{|\alpha|} \sup_{x \in \mathbb{R}^n} \left(g\ast\mathcal{W}(\cdot; s,t)\right)(x)+ 2^{|\alpha|} \sup_{x \in \mathbb{R}^n} \left| \partial_x^\beta\phi\right|\ast h(x)\notag\\
	&=2^{|\alpha|} \left(\left\|g\ast \mathcal{W}(\cdot; s,t)\right\|_{\infty}+\left\||\partial_x^\beta\phi|\ast h\right\|_{\infty} \right)\notag\\
	&\leq 2^{|\alpha|} \left(\left\|g\right\|_{\infty} \left\|\mathcal{W}(\cdot; s,t)\right\|_{1}+\left\|\partial_x^\beta\phi\right\|_{\infty}\left\| h\right\|_{1} \right).
\end{align}
\end{remark}

With this preparation, we proceed with the following Proposition.

\begin{proposition}\label{pr2}
The following properties hold:
\begin{itemize}
\item[1.]
$$
\operatorname{W}_{s,t}\in C(\mathcal{S}(\mathbb{R}^n),\mathcal{S}(\mathbb{R}^n)),\quad\forall(s,t)\in\Delta;
$$

\item[2.] 
For every $v\in\mathcal{S}(\mathbb{R}^n)$, the map $(s,t)\mapsto\operatorname{W}_{s,t}v$ belongs to
$$
C^\infty(\Delta,\mathcal{S}(\mathbb{R}^n));
$$

\item[3.]
$$
\operatorname{W}_{r,s}\operatorname{W}_{s,t}=\operatorname{W}_{r,t},\quad\forall(r,s),(s,t)\in\Delta;
$$

\item[4.]
$$
\partial_t\operatorname{W}_{s,t}v-\mathcal{L}_t\operatorname{W}_{s,t}v=0,\quad\forall v\in\mathcal{S}(\mathbb{R}^n),\quad\forall(s,t)\in\Delta;
$$
\item[5.]
$$
\lim_{\substack{\delta,\epsilon \to 0^+ \\ s-\delta \ge 0}}\operatorname{W}_{s-\delta,s+\epsilon}v=v,\quad\forall v\in\mathcal{S}(\mathbb{R}^n),\quad\forall s\in(0,+\infty);
$$
\end{itemize}
\end{proposition}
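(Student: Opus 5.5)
I would prove all five parts on the Fourier side, using Remark \ref{remarkfourierinverse}. Since $\widehat{\mathcal{W}(\cdot;s,t)}(\xi)=e^{-\langle[A(t)-A(s)]\xi,\xi\rangle}$, the operator $\operatorname{W}_{s,t}$ is the Fourier multiplier with symbol $m_{s,t}(\xi)\doteq e^{-\langle[A(t)-A(s)]\xi,\xi\rangle}$.

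\textbf{Part 1.} I would apply the estimate \eqref{convolution} of Remark \ref{phi}. It bounds each Schwartz seminorm $\sup_x|x^\alpha\partial^\beta_x\operatorname{W}_{s,t}\phi|$ by $2^{|\alpha|}\big(\|g\|_\infty\|\mathcal{W}\|_1+\|\partial^\beta\phi\|_\infty\|h\|_1\big)$. Here $\|\mathcal{W}(\cdot;s,t)\|_1=1$ by Proposition \ref{pr1}, part 1, and $\|h\|_1<\infty$ is a fixed constant for fixed $(s,t)$, since $\mathcal W$ is Gaussian. Moreover $\|g\|_\infty$ and $\|\partial^\beta\phi\|_\infty$ are finitely many Schwartz seminorms of $\phi$. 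So every seminorm of $\operatorname{W}_{s,t}\phi$ is bounded by a finite combination of seminorms of $\phi$, which gives continuity of the linear map.

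\textbf{Parts 2 and 4.} Part 2 is exactly Remark \ref{partialtimes}. For part 4, Remark \ref{partialtimes} and Remark \ref{partialtimesx} let $\partial_t$ and $\mathcal{L}_t$ pass onto the kernel, giving
\[
(\partial_t-\mathcal{L}_t)\operatorname{W}_{s,t}v=\big((\partial_t-\mathcal{L}_t)\mathcal{W}(\cdot;s,t)\big)\ast v,
\]
and this vanishes by Proposition \ref{pr1}, part 2.

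\textbf{Part 3.} I take the Fourier transform. Since $\widehat{f\ast g}=\hat f\hat g$, the product of symbols is
\[
e^{-\langle[A(s)-A(r)]\xi,\xi\rangle}\,e^{-\langle[A(t)-A(s)]\xi,\xi\rangle}=e^{-\langle[A(t)-A(r)]\xi,\xi\rangle}.
\]
Hence $\mathcal{W}(\cdot;r,s)\ast\mathcal{W}(\cdot;s,t)=\mathcal{W}(\cdot;r,t)$. Associativity of convolution of Schwartz functions then gives $\operatorname{W}_{r,s}\operatorname{W}_{s,t}v=\operatorname{W}_{r,t}v$.

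\textbf{Part 5, convergence in $\mathcal{S}$.} This is the main obstacle, because the limit must hold in the Schwartz topology, not just in $\mathcal{S}'$ as in Proposition \ref{pr1}. I would prove $\hat v\,m_{s-\delta,s+\epsilon}\to\hat v$ in $\mathcal{S}$ and then use that the Fourier transform is a topological isomorphism of $\mathcal{S}$. Put $M=M_{\delta,\epsilon}=A(s+\epsilon)-A(s-\delta)$. By continuity of $a$, $\|M\|_{op}\le\int_{s-\delta}^{s+\epsilon}\|a\|_{op}\to0$; this is why $s>0$ is needed, so that $s-\delta$ stays in a compact neighbourhood.

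\textbf{Part 5, seminorm estimate.} Fix $\alpha,\beta$. By Leibniz,
\[
\xi^\alpha\partial^\beta_\xi\big(\hat v(m-1)\big)=\xi^\alpha(m-1)\,\partial^\beta\hat v+\sum_{0\ne\gamma\le\beta}\binom{\beta}{\gamma}\xi^\alpha\,\partial^{\beta-\gamma}\hat v\,\partial^\gamma m .
\]
For the terms with $\gamma\neq0$, Lemma \ref{upperboundderivative} together with $e^{-\langle\xi,M\xi\rangle}\le1$ gives
\[
|\partial^\gamma m|\lesssim(1+|\xi|)^{|\gamma|}\sum_{\mu=1}^{|\gamma|}\|M\|_{op}^\mu .
\]
Multiplied by $\xi^\alpha\partial^{\beta-\gamma}\hat v$, these terms are bounded by a Schwartz seminorm of $\hat v$ times a quantity that tends to $0$. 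For the $\gamma=0$ term I use
\[
|1-e^{-\langle\xi,M\xi\rangle}|\le\langle\xi,M\xi\rangle\le\|M\|_{op}|\xi|^2,
\]
so it is bounded by $\|M\|_{op}\sup_\xi|\xi|^{|\alpha|+2}|\partial^\beta\hat v|\to0$. Hence every seminorm of $\hat v(m-1)$ tends to $0$, which proves part 5.
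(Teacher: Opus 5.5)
Your proposal is correct and follows essentially the same route as the paper. Part 1 uses the estimate \eqref{convolution} of Remark \ref{phi}; Parts 2 and 4 use Remarks \ref{partialtimes}, \ref{partialtimesx} and Proposition \ref{pr1}; Part 3 uses multiplicativity of the Gaussian symbols $e^{-\langle[A(t)-A(s)]\xi,\xi\rangle}$; and Part 5 uses the same Leibniz splitting on the Fourier side into the $\gamma\neq0$ terms, handled by Lemma \ref{upperboundderivative}, and the $\gamma=0$ term, handled by $|1-e^{-y}|\le y$. The only cosmetic differences are that you phrase Part 1 as a seminorm bound rather than as sequential continuity, and in Part 3 you first prove $\mathcal{W}(\cdot;r,s)\ast\mathcal{W}(\cdot;s,t)=\mathcal{W}(\cdot;r,t)$ and then invoke associativity.
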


\begin{proof}
To prove the first part, let $\{\phi_{m}\}_{m=1}^{\infty} \subset \mathcal{S}(\mathbb{R}^n)$ such that $\phi_{m} \xrightarrow{\mathcal{S}(\mathbb{R}^n)} 0$ when $m \rightarrow \infty$. It means that
\begin{equation}\label{assumption}
	\sup_{x \in \mathbb{R}^n} \left|x^\alpha \partial_x^\beta \phi_{m}(x)\right| \rightarrow 0, ~\text{as}\quad m\rightarrow \infty, \quad\forall\alpha, \beta \in \mathbb{N}_{0}^n.
\end{equation}
Now, we need to show that for every $\alpha, \beta \in \mathbb{N}_{0}^n$,
$$
\sup_{x \in \mathbb{R}^n} \left|x^\alpha \partial_x^\beta \operatorname{W}_{s,t}\phi_{m}(x)\right| \rightarrow 0, ~\text{as}\quad m\rightarrow \infty, ~~\forall(s,t)\in\Delta,~~ \forall\alpha, \beta \in \mathbb{N}_{0}^n.
$$
In this step, we make use of Remark \ref{phi}, where we set $\phi \doteq \phi_{m}$ and $g \doteq g_{m}$ for every $m\in\mathbb{N}$ to obtain
\begin{equation}
	\sup_{x\in \mathbb{R}^n} \left|x^\alpha \partial_x^\beta \operatorname{W}_{s,t}\phi_{m}(x)\right| \leq 2^{|\alpha|} \left(\left\|g_{m}\right\|_{\infty} \left\|\mathcal{W}(\cdot; s,t)\right\|_{1}+\left\|\partial_x^\beta\phi_{m}\right\|_{\infty}\left\| h\right\|_{1} \right).
\end{equation}	 
Therefore,
\begin{align*}
	\sup_{x\in\mathbb{R}^n} \left|x^\alpha \partial_x^\beta \operatorname{W}_{s,t}\phi_{m}(x)\right|\rightarrow 0,\quad \forall x\in \mathbb{R}^n,~~ \forall(s,t)\in\Delta,~~ \forall\alpha, \beta \in \mathbb{N}_{0}^n,~~\forall m\in \mathbb{N}.
\end{align*}
This is due to the facts that $\left\|\mathcal{W}(\cdot; s,t)\right\|_{1}<\infty, ~\text{and}~  \|h\|_{1} <\infty$, as well as the assumption \eqref{assumption}.
Hence, the proof of part $1$ is completed.

The proof of part 2 follows similarly to the reasoning in Remark \ref{partialtimes}.

To prove part 3, since 
$$
\operatorname{W}_{r,s}:\mathcal{S}(\mathbb{R}^n) \longrightarrow \mathcal{S}(\mathbb{R}^n), \quad \forall (r,s) \in \Delta,
$$
and
$$\operatorname{W}_{s,t}v \in \mathcal{S}(\mathbb{R}^n),~~\forall v\in \mathcal{S}(\mathbb{R}^n),~~\forall (s,t)\in \Delta,$$ then we can calculate $\operatorname{W}_{r,s}(\operatorname{W}_{s,t}v)$. Let $v\in \mathcal{S}(\mathbb{R}^n)$, then we have
\begin{align*}
	\widehat{\operatorname{W}_{r,s}(\operatorname{W}_{s,t}v)}&= \widehat{\mathcal{W}(\cdot;r,s)\ast \operatorname{W}_{s,t}v} = \widehat{\mathcal{W}(\cdot;r,s)} \widehat{\operatorname{W}_{s,t}v}\\ 
	&=\widehat{\mathcal{W}(\cdot;r,s)}(\widehat{ \mathcal{W}(\cdot;s,t)\ast v}) = \widehat{\mathcal{W}(\cdot;r,s)}\widehat{ \mathcal{W}(\cdot;s,t)}\hat{v},\\
	&\forall (s,t), (r,s)\in \Delta,~~\forall v\in \mathcal{S}(\mathbb{R}^{n}).
\end{align*}
Moreover,
\begin{align*}
	\widehat{\mathcal{W}(\cdot;r,s)}\widehat{ \mathcal{W}(\cdot;s,t)}(\xi) &= \widehat{\mathcal{W}(\xi;r,s)}\widehat{ \mathcal{W}(\xi;s,t)}\\
	&= e^{-\langle [A(s)-A(r)]\xi, \xi \rangle} e^{-\langle [A(t)-A(s)]\xi, \xi \rangle}\\
	&=e^{-\langle [A(t)-A(r)]\xi, \xi \rangle}=	\widehat{\mathcal{W}(\cdot;r,t)},\\
	&\forall (s,t), (r,s)\in \Delta.
\end{align*}
Therefore,
\begin{align*}
	\widehat{\operatorname{W}_{r,s}(\operatorname{W}_{s,t}v)}&= \widehat{\mathcal{W}(\cdot;r,s)}\widehat{ \mathcal{W}(\cdot;s,t)}\hat{v} = \widehat{\mathcal{W}(\cdot;r,t)}\hat{v}\\
	&=\widehat{\operatorname{W}_{r,t}v},\\
	&\forall (s,t), (r,s)\in \Delta,~~\forall v\in \mathcal{S}(\mathbb{R}^{n}).
\end{align*}
Thus,
$$
\operatorname{W}_{r,s}\operatorname{W}_{s,t}=\operatorname{W}_{r,t},~~~\forall (s,t), (r,s)\in \Delta.
$$
In part $4$, we want to prove that for every $v \in \mathcal{S}(\mathbb{R}^n)$ and for every $(s,t)\in \Delta$ we have 
$$
\partial_{t}\operatorname{W}_{s,t}v-\mathcal{L}_t\operatorname{W}_{s,t}v=0.
$$
Using the definition of the operator $\operatorname{W}_{s,t}$, we have
\begin{equation}\label{n1}
	\partial_{t}\operatorname{W}_{s,t}v-\mathcal{L}_t\operatorname{W}_{s,t}v = \partial_{t}\left(\mathcal{W}(\cdot;s,t)\ast v\right)-\mathcal{L}_t\left(\mathcal{W}(\cdot;s,t)\ast v\right),\quad \forall v \in \mathcal{S}(\mathbb{R}^n),\quad \forall(s,t)\in \Delta.
\end{equation}
From Remark \ref{partialtimes}, it follows that
\begin{equation}\label{n2}
	\partial_{t}\left(\mathcal{W}(\cdot;s,t)\ast v\right) = \partial_{t}\mathcal{W}(\cdot;s,t)\ast v,\quad \forall v \in \mathcal{S}(\mathbb{R}^n),\quad \forall(s,t)\in \Delta.
\end{equation}	
In addition, using Remark \ref{partialtimesx}, we have
\begin{align}\label{n3}
	&\mathcal{L}_{t}\left(\mathcal{W}(\cdot; s,t)\ast v\right)= \sum_{i,j=1}^n a_{i,j}(t) \frac{\partial^2}{\partial_{x_i}\partial_{x_j}}\left(\mathcal{W}(\cdot; s,t)\ast v \right)= \left(\sum_{i,j=1}^n a_{i,j}(t)\frac{\partial^2}{\partial_{x_i}\partial_{x_j}}\mathcal{W}(\cdot; s,t)\right)\ast v,\\
	&\quad  \forall v \in \mathcal{S}(\mathbb{R}^n),~~ \forall(s,t)\in \Delta.\notag 
\end{align}
Plugging \eqref{n2} and \eqref{n3} in \eqref{n1} we obtain the following:
\begin{align}
	\partial_{t}\operatorname{W}_{s,t}v-\mathcal{L}_t\operatorname{W}_{s,t}v&= \partial_{t}\mathcal{W}(\cdot;s,t)\ast v-\sum_{i,j=1}^n a_{i,j}(t)\frac{\partial^2}{\partial_{x_i}\partial_{x_j}}\mathcal{W}(\cdot; s,t)\ast v \notag\\
	&= \left(\partial_{t}\mathcal{W}(\cdot;s,t)-\sum_{i,j=1}^n a_{i,j}(t)\frac{\partial^2}{\partial_{x_i}\partial_{x_j}}\mathcal{W}(\cdot; s,t)\right)\ast v\notag\\
	&=\left(\partial_{t}\mathcal{W}(\cdot;s,t)-\mathcal{L}_{t}\mathcal{W}(\cdot; s,t)\right)\ast v
	=0, \quad  \forall v \in \mathcal{S}(\mathbb{R}^n),~~ (s,t)\in \Delta.
\end{align}
The justification for the final step in the above computation follows directly from part 2 of Proposition \ref{pr1}.

To prove part $5$, it suffices to show that for every $\hat{f}\in \mathcal{S}(\mathbb{R}^n)$, we have
\begin{equation}\label{Ws}
	\widehat{\operatorname{W}_{s-\delta,s+\epsilon}f}\xrightarrow{\mathcal{S}(\mathbb{R}^n)}\hat{f},\quad\text{when}\quad\epsilon,\delta\rightarrow 0,\quad\forall s\in(0,\infty),~~s-\delta\geq 0.
\end{equation}
Before proceeding with the proof, let us note some key points that will be used throughout the proof.
Firstly, for any $|\alpha|-$times differentiable complex-valued functions $f$ and $g$,
\begin{equation}\label{leibnitz}
	\partial^{\alpha}(fg)=\sum_{k\leq\alpha}\binom{\alpha}{k}
	\partial^kf \partial^{\alpha-k}g,~~\forall f,g\in C^{|\alpha|}(\mathbb{R}^n),~\forall \alpha\in\mathbb{N}_{0}^n.
\end{equation}
Secondly, in the following calculation, we denote the matrix norm $\left\| a(\tau) \right\|_{op}$ as
\[
\left\| a(\tau) \right\|_{op} \doteq \sup_{|\xi|=1}|a(\tau)\xi|, \quad \forall \tau \in [0,\infty),\quad\forall\xi\in\mathbb{R}^n,
\]

where $a\in C^\infty(\mathbb{R}_+,\mathrm{GL}(n))\cap C([0,\infty), \mathrm{GL}(n))$.

Now, applying \eqref{leibnitz}, we obtain the following:
\begin{align}\label{alpgabtaderivative}
	&\sup_{\xi\in\mathbb{R}^{n}}	\left|\xi^{\alpha}\partial_{\xi}^{\beta}\left(   \widehat{\operatorname{W}_{s-\delta,s+\epsilon}f(\xi)}-\hat{f}(\xi)\right) \right| =\sup_{\xi\in\mathbb{R}^{n}}\left| \xi^{\alpha} \partial_{\xi}^{\beta}\left(\widehat{\mathcal{W}(\cdot;s-\delta,s+\epsilon)}(\xi)\hat{ f}(\xi)-\hat{f}(\xi)\right)\right|  \notag\\
	&=\sup_{\xi\in\mathbb{R}^{n}}\left| \xi^{\alpha} \partial_{\xi}^{\beta}\left(\widehat{\left( \mathcal{W}(\cdot;s-\delta,s+\epsilon) -1\right) }\hat{f}(\xi)\right) \right|
	\leq\sup_{\xi\in\mathbb{R}^{n}}\sum_{\gamma\leq \beta}\binom{\beta}{\gamma}\left|\partial_{\xi}^{\gamma}\widehat{\left( \mathcal{W}(\xi;s-\delta,s+\epsilon) -1\right) }\right|\, \left| \xi^{\alpha}\partial_{\xi}^{\beta-\gamma}\hat{f}(\xi)\right|,\notag\\
	&\forall\hat{f}\in\mathcal{S}(\mathbb{R}^n),\quad \forall\alpha,\beta\in\mathbb{N}_{0}^n.
\end{align}
In view of \eqref{alpgabtaderivative}, we distinguish between two cases, $\gamma \neq 0$ and $\gamma = 0$. When $\gamma \neq 0$, it follows from Lemma \ref{upperboundderivative} that
\begin{align}\label{gamma}
	&	\left|\partial_{\xi}^{\gamma}\widehat{\left( \mathcal{W}(\xi;s-\delta,s+\epsilon) -1\right) }\right| =	\left|\partial_{\xi}^{\gamma}\widehat{ \mathcal{W}(\xi;s-\delta,s+\epsilon)}\right|\notag\\
	&\lesssim_{n,\gamma}e^{-\langle \xi, [A(s+\epsilon)-A(s-\delta)]\xi\rangle}(1 + |\xi|)^{|\gamma|} \sum_{\mu=1}^{|\gamma|}\left\| A(s+\epsilon)-A(s-\delta)\right\| _{op}^{\mu} \notag\\
	&\leq (1 + |\xi|)^{|\gamma|} \sum_{\mu=1}^{|\gamma|} (\epsilon+\delta)^{\mu}\left( \max_{\tau\in[0,s+1]}\|a(\tau)\|_{op}\right) ^\mu \notag\\
	&\leq (1 + |\xi|)^{|\gamma|} \sum_{\mu=1}^{|\gamma|} (\epsilon+\delta)^{\mu}\left(1+\max_{\tau\in[0,s+1]} \|a(\tau)\|_{op}\right) ^\mu \notag\\
	&\leq (1 + |\xi|)^{|\gamma|}\left( 1+\max_{\tau\in[0,s+1]}\|a(\tau)\|_{op}\right) ^{|\gamma|} \sum_{\mu=1}^{|\gamma|} (\epsilon+\delta)^{\mu}.
\end{align}
For sufficiently small values of $\epsilon,\delta$ such that $\epsilon+\delta < 1$, we have
$$
\sum_{\mu=1}^{|\gamma|} (\epsilon+\delta)^\mu
= (\epsilon+\delta)\,\frac{1-(\epsilon+\delta)^{|\gamma|}}{1-(\epsilon+\delta)}\le \frac{\epsilon+\delta}{1-(\epsilon+\delta)},\quad \forall \gamma \in \mathbb{N}^{n}.
$$
Consequently, the whole expression is finite and the estimate tends to zero as $\epsilon,\delta \to 0$.
Thus, we have
\begin{align}\label{epsilondelta1}
	&\sup_{\xi\in\mathbb{R}^{n}}	\left|\xi^{\alpha}\partial_{\xi}^{\beta}\left(   \widehat{\operatorname{W}_{s-\delta,s+\epsilon}f(\xi)}-\hat{f(\xi)}\right) \right| \leq \sup_{\xi\in\mathbb{R}^{n}}\sum_{\substack{\gamma\leq \beta\\\gamma\neq 0}}\binom{\beta}{\gamma}\left|\partial_{\xi}^{\gamma}\widehat{\left( \mathcal{W}(\xi;s-\delta,s+\epsilon) -1\right) }\left| \right| \xi^{\alpha}\partial_{\xi}^{\beta-\gamma}\hat{f(\xi)}\right|\notag\\
	&\lesssim_{n,\gamma} (1+\max_{\tau\in[0,s+1]}\|a(\tau)\|_{op})^{|\beta|} \sup_{\xi\in\mathbb{R}^{n}}\sum_{\substack{\gamma\leq \beta\\\gamma\neq 0}}\binom{\beta}{\gamma} (1 + |\xi|)^{|\gamma|}\left| \xi^{\alpha}\partial_{\xi}^{\beta-\gamma}\hat{f(\xi)}\right|\frac{\epsilon+\delta}{1-(\epsilon+\delta)}\notag\\
	&\lesssim_{n,\gamma, s,\alpha,\beta, f}\frac{\epsilon+\delta}{1-(\epsilon+\delta)},
\end{align}
where the last line is valid since $\hat{f}\in\mathcal{S}(\mathbb{R}^{n})$ and $a\in C^\infty(\mathbb{R}_+,\mathrm{GL}(n))\cap C([0,\infty), \mathrm{GL}(n))$.
We now turn to the case $\gamma=0$ as follows:
\begin{align}\label{eta0}
	&\sup_{\xi\in\mathbb{R}^{n}} 
	\left| \widehat{ \mathcal{W}(\xi; s-\delta, s+\epsilon) - 1 } \right| 
	\left| \xi^{\alpha} \partial_{\xi}^{\beta} \widehat{f}(\xi) \right| \notag\\
	&= \sup_{\xi\in\mathbb{R}^{n}} 
	\left| e^{-\langle \xi, [A(s+\epsilon) - A(s-\delta)] \xi \rangle} - 1 \right| 
	\left| \xi^{\alpha} \partial_{\xi}^{\beta} \widehat{f}(\xi) \right| \notag \\
	&= \sup_{\xi\in\mathbb{R}^{n}} 
	\left| \langle \xi, [A(s+\epsilon) - A(s-\delta)] \xi \rangle \right| 
	\cdot \frac{ \left| e^{-\langle \xi, [A(s+\epsilon) - A(s-\delta)] \xi \rangle} - 1 \right| }
	{ \left| \langle \xi, [A(s+\epsilon) - A(s-\delta)] \xi \rangle \right| }
	\left| \xi^{\alpha} \partial_{\xi}^{\beta} \widehat{f}(\xi) \right| \notag \\
	&\leq \left\| 
	\frac{ \left| e^{-\langle \cdot, [A(s+\epsilon) - A(s-\delta)] \cdot \rangle} - 1 \right| }
	{ \left| \langle \cdot, [A(s+\epsilon) - A(s-\delta)] \cdot \rangle \right| }
	\right\|_{\infty} \cdot 
	\sup_{\xi\in\mathbb{R}^{n}} |\xi|^2 \left\| A(s+\epsilon) - A(s-\delta) \right\|_{op}
	\left| \xi^{\alpha} \partial_{\xi}^{\beta} \widehat{f}(\xi) \right| \displaybreak\notag \\
	&\leq \left\| A(s+\epsilon) - A(s-\delta) \right\|_{op}
 \sup_{\xi\in\mathbb{R}^{n}} |\xi|^{|\alpha| + 2} 
	\left| \partial_{\xi}^{\beta} \widehat{f}(\xi) \right| \notag \\
	&\leq (\epsilon + \delta) \max_{\tau \in [0, s+1]} \| a(\tau) \|_{op}
 \sup_{\xi\in\mathbb{R}^{n}} |\xi|^{|\alpha| + 2} 
	\left| \partial_{\xi}^{\beta} \widehat{f}(\xi) \right| \notag \\
	&\lesssim_{s,\alpha,\beta, f} (\epsilon + \delta), \quad \forall \epsilon, \delta \in(0,1). 
\end{align}
It is important to note that the above calculations are justified by the following considerations:
\begin{equation}
	\langle \xi,[A(s+\epsilon)-A(s-\delta)]\xi\rangle>0,\quad \forall \xi\neq 0, ~\forall s\in(0,\infty),~~s-\delta\geq 0,
\end{equation}
and
$$
\left\|  \dfrac{1-e^{-\langle\cdot,[A(s+\epsilon)-A(s-\delta)]\cdot\rangle}}{\langle\cdot,[A(s+\epsilon)-A(s-\delta)]\cdot\rangle}\right\|_{\infty}=1,\quad \forall s\in(0,\infty),~ s-\delta\geq 0,
$$
and
$$
\left| \langle \xi,[A(s+\epsilon)-A(s-\delta)]\xi\rangle\right| \leq |\xi|^{2} \left\| A(s+\epsilon)-A(s-\delta)\right\|_{op},\quad\forall \xi\in\mathbb{R}^{n},~ \forall s\in(0,\infty),~ s-\delta\geq 0,
$$
and
$$
\left\| A(s+\epsilon)-A(s-\delta)\right\|_{op}\leq \int\limits^{s+\epsilon}_{s-\delta}\left\| a(\tau)\right\|_{op}d\tau\leq (\epsilon+\delta)\left( \max_{\tau \in[0,s+1]}\left\| a(\tau)\right\|_{op}\right) <\infty,$$
$$~~~~ \forall s\in(0,\infty),~ s-\delta\geq 0,
$$
as well as
$$
\sup_{\xi\in\mathbb{R}^{n}}\left|\xi\right|^{|\alpha|+2}\left| \partial^{\beta}\widehat{f}(\xi)\right|<\infty,\quad\forall \xi\in\mathbb{R}^{n},\quad\forall \alpha,\beta\in \mathbb{N}_{0}^{n},
$$  
given that $\widehat{f}\in \mathcal{S}(\mathbb{R}^n)$.
By substituting \eqref{epsilondelta1} and \eqref{eta0} into \eqref{alpgabtaderivative}, it follows that
\begin{align}\label{epsilon}
	&\sup_{\xi\in\mathbb{R}^{n}}\sum_{\gamma\le\beta}\binom{\beta}{\gamma}\left|\partial^{\gamma}\widehat{\left( \mathcal{W}(\xi;s-\delta,s+\epsilon) -1\right) }\right| \left| \xi^{\alpha}\partial^{\beta-\gamma}\widehat{f}(\xi)\right|\notag\\
	&\lesssim_{n,\gamma, s,\alpha,\beta, f}\frac{(\epsilon+\delta)^{|\gamma|}}{\epsilon+\delta-1}+(\epsilon+\delta).
\end{align}
Letting $\delta$ and $\epsilon$ tend to zero in \eqref{epsilon}, we obtain the desired result.
Thus, the proof of part 5 and, consequently, Proposition \ref{pr2}, is complete.
\end{proof}

The operator family \(\{\operatorname{W}_{s,t}\}_{(s,t) \in \Delta}\) allows us to solve the Cauchy problem for the equation (\ref{heatEq}) in the classical sense in terms of Duhamel's principle. Prior to addressing the solution, it is necessary to introduce some preliminaries.
Before proceeding with these preliminaries, note that we set $$\operatorname{W}_{t,t}f=f,\quad \forall f\in \mathcal{S}(\mathbb{R}^n), ~ \forall t\in [0,\infty).$$
\begin{remark}\label{Leibniz}
Assume that $f\in C^\infty(\mathbb{R}_{+},\mathcal{S}(\mathbb{R}^n))$. Then,
\begin{align}
	\partial_{t}^\gamma\left(\int\limits_{0}^{t}\operatorname{W}_{s,t}f(s,x)ds\right)&=	\partial_{t}^\gamma\left(\int\limits_{0}^{t}\mathcal{W}(\cdot;s,t)\ast f(s)(x)ds\right)=\gamma\partial_{t}^{\gamma-1}W_{t,t}f(t,x)+\int\limits_{0}^{t}\partial_{t}^{\gamma}\left(\operatorname{W}_{s,t}f(s,x)\right)ds\notag\\
	&=\gamma\partial_{t}^{\gamma-1}f(t,x)+\int\limits_{0}^{t}\partial_{t}^{\gamma}\operatorname{W}_{s,t}f(s,x)ds,\\
	&\quad\forall x\in \mathbb{R}^n,~~\forall t\in \mathbb{R}_{+}, ~~\forall \gamma\in \mathbb{N}\notag.
\end{align}
The differentiation under the integral sign is justified because $f\in C^\infty(\mathbb{R}_+,\mathcal{S}(\mathbb{R}^n))$ and, by part $1$ of Proposition \ref{pr2}, for each $t\in\mathbb{R}_+$ the map  
$$
s\longmapsto \operatorname{W}_{s,t}f(s),
$$ 
is continuous from $[0,\infty)$ into $\mathcal{S}(\mathbb{R}^n)$.
Moreover, part $2$ of  Proposition \ref{pr2} implies that for every $k\in\mathbb{N}$ the map  
$$
(s,t)\longmapsto \partial_t^k\bigl(\operatorname{W}_{s,t}f(s)\bigr),
$$  
is continuous on $\Delta$ with values in $\mathcal{S}(\mathbb{R}^n)$.
Consequently, for any fixed $t>0$, the integral  
$$
\int_0^t \operatorname{W}_{s,t}f(s)\,ds,
$$  
is well--defined as an element of $\mathcal{S}(\mathbb{R}^n)$, and differentiation with respect to $t$ under the integral sign is justified.
\end{remark}

\begin{lemma}\label{avoidingstot}
\begin{itemize}
\item[$1.$]
	Let $f \in C^{\infty}(\mathbb{R}_{+}, \mathcal{S}(\mathbb{R}^{n}))$, and fix $\alpha, \beta \in \mathbb{N}_0^n$. Then for all $(s,t) \in \Delta$ and $\gamma \in \mathbb{N}_0$, we have
\begin{align}\label{lemma21}
	&\sup_{x \in \mathbb{R}^n} \Big| x^\alpha \big( \partial_t^\gamma \mathcal{W}(\cdot; s, t) \ast \partial_x^\beta f(s, \cdot) \big)(x) \Big|\notag\\
	&\lesssim_{\alpha,\beta,\gamma} \sum_{m \le \alpha} \sum_{\zeta \le \alpha-m} 
	\sum_{\mu'=1}^{|m|} \sum_{\mu=1}^\gamma 
	\left( \int_0^t \|a(\tau)\|_{\mathrm{op}} \, d\tau \right)^{\mu'} 
	\Theta_{\mu,\gamma}(t) \,
	I_{m,\zeta,\mu',\mu,\beta}(f(s)),
\end{align}
where
$$
\Theta_{\mu,\gamma}(t)\doteq  \left(\max_{1 \le q \le \gamma} \|\partial_t^{q} A(t)\|_{\mathrm{op}}\right)^{\mu}, \quad	I_{m,\zeta,\mu',\mu,\beta}(f(s)) \doteq \int_{\mathbb{R}^n} (1+|\xi|)^{\mu'+\mu+|\beta|-|\zeta|} \big| \partial_\xi^{\alpha-m-\zeta} \widehat{f}(s,\xi) \big| \, d\xi.
$$
Moreover, each $I_{m,\zeta,\mu',\mu,\beta}(f(s))$ is bounded by a finite sum of Schwartz seminorms of $f(s)$. Explicitly, for integers $N,N'$ satisfying
$$
N > \mu'+\mu+|\beta|-|\zeta| + n, \quad
N' > |\alpha-m-\zeta|+	N+ n,
$$
we have
$$
I_{m,\zeta,\mu',\mu,\beta}(f(s)) \lesssim_{\alpha,m,\zeta,\mu',\mu,\beta,N,N',n} \sum_{|k'|\le N'}\sum_{|b|\le N}\|f(s)\|_{k',b}.
$$

In particular, for $\gamma = 0$, the estimate simplifies to
\begin{align}\label{lemma22}
	\sup_{x \in \mathbb{R}^n} 
	\Big| x^\alpha \big( \mathcal{W}(\cdot; s, t) \ast \partial_x^\beta f(s, \cdot) \big)(x) \Big| 
	&\lesssim_{\alpha,\beta} \sum_{m\le \alpha} \sum_{\zeta\le \alpha-m} \sum_{\mu'=1}^{|m|} 
	\left(\int_{0}^{t}\| a(\tau)\|_{\mathrm{op}}\,d\tau\right)^{\mu'} I_{m,\zeta,\mu',\beta}(f(s)),
\end{align}
with
\[
I_{m,\zeta,\mu',\beta}(f(s))
\doteq \int_{\mathbb{R}^n} (1 + |\xi|)^{\mu' + |\beta| - |\zeta|} 
\big| \partial_\xi^{\alpha - m - \zeta} \hat{f}(s, \xi) \big| \, d\xi,
\]
and the corresponding bound
\[
I_{m,\zeta,\mu',\beta}(f(s))
\lesssim_{N,N',n} \sum_{|k'|\le N'}\sum_{|b|\le N}\|f(s)\|_{k',b}.\]
\item [$2$] Let $\mathcal{B}(\mathcal{S}(\mathbb{R}^n))$ denote the space of continuous linear operators on $\mathcal{S}(\mathbb{R}^n)$, and let $C \subset \overline{\Delta}$ be compact. Then, the family
	$
	\{\operatorname{W}_{s,t}\}_{(s,t)\in C} \subset \mathcal{B}(\mathcal{S}(\mathbb{R}^n))$ is equicontinuous.
	
	\item [$3$] Let $1\leq q \leq \infty$, and let $\mathcal{B}(W^{q,\infty}(\mathbb{R}^n))$ denote the space of continuous linear operators on $W^{q,\infty}(\mathbb{R}^n)$, and let $C \subset \overline{\Delta}$ be compact. Then, the family 
	$
	\{\operatorname{W}_{s,t}\}_{(s,t)\in C} \subset \mathcal{B}(W^{q,\infty}(\mathbb{R}^n))
	$ 
	is equicontinuous.
	\end{itemize}
\end{lemma}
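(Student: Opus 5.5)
The plan for part $1$ is to pass to the Fourier side, where $\mathcal{W}$ is an explicit Gaussian by Remark~\ref{remarkfourierinverse}, and to estimate the sup-norm by an $L^1$-norm of the Fourier transform. The Fourier transform of $x^\alpha(g\ast h)$ is, up to a constant $i^{|\alpha|}$, equal to $\partial_\xi^\alpha(\hat g\,\hat h)$, and $\|u\|_\infty\le(2\pi)^{-n}\|\hat u\|_1$. With $g=\partial_t^\gamma\mathcal{W}(\cdot;s,t)$ and $h=\partial_x^\beta f(s,\cdot)$, Remark~\ref{partialtimes} and Remark~\ref{remarkfourierinverse} give $\hat g(\xi)=\partial_t^\gamma e^{-\langle\xi,[A(t)-A(s)]\xi\rangle}$, and $\hat h(\xi)=(i\xi)^\beta\hat f(s,\xi)$. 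Therefore
\[
\sup_x\Big|x^\alpha\big(\partial_t^\gamma\mathcal{W}(\cdot;s,t)\ast\partial_x^\beta f(s,\cdot)\big)(x)\Big|\lesssim\int_{\mathbb{R}^n}\Big|\partial_\xi^\alpha\Big(\partial_t^\gamma e^{-\langle\xi,M\xi\rangle}\,\xi^\beta\hat f(s,\xi)\Big)\Big|\,d\xi,\qquad M\doteq A(t)-A(s).
\]
I then apply the Leibniz rule \eqref{leibnitz} twice. The first application, with $m\le\alpha$, puts $\partial_\xi^m$ on the kernel factor and $\partial_\xi^{\alpha-m}$ on $\xi^\beta\hat f$. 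The second, with $\zeta\le\alpha-m$, splits $\partial_\xi^{\alpha-m}(\xi^\beta\hat f)$ into $\partial_\xi^\zeta\xi^\beta\cdot\partial_\xi^{\alpha-m-\zeta}\hat f$, and $|\partial_\xi^\zeta\xi^\beta|\lesssim(1+|\xi|)^{|\beta|-|\zeta|}$. This produces exactly the indices $m,\zeta$ and the factor $|\partial_\xi^{\alpha-m-\zeta}\hat f(s,\xi)|$ appearing in $I_{m,\zeta,\mu',\mu,\beta}$.

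The kernel factor $\partial_\xi^m\partial_t^\gamma e^{-\langle\xi,M\xi\rangle}$ is the core of the argument.

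\textbf{Time derivative.} I first apply Fa\`a di Bruno in $t$ (for $\gamma\ge1$). This writes $\partial_t^\gamma e^{-\langle\xi,M\xi\rangle}$ as $e^{-\langle\xi,M\xi\rangle}$ times a sum over $\mu=1,\dots,\gamma$ of products of $\mu$ factors $\langle\xi,\partial_t^{q_j}A(t)\xi\rangle$ with $q_1+\dots+q_\mu=\gamma$. Each product is controlled by $\Theta_{\mu,\gamma}(t)$ times a power of $|\xi|$.

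\textbf{Spatial derivative.} I then distribute $\partial_\xi^m$ over the exponential and these polynomial factors. For the exponential I use the argument of Lemma~\ref{upperboundderivative}, but at the finer level of \eqref{derivative3} rather than its final crude form. The $\mu'$-th Fa\`a di Bruno term is a product of $\mu'$ factors, each either $2(M\xi)_k$ or $2M_{kl}$. It is therefore bounded by $\|M\|_{op}^{\mu'}(1+|\xi|)^{\mu'}e^{-\langle\xi,M\xi\rangle}$, not by $(1+|\xi|)^{|m|}$. Together with $\|M\|_{op}\le\int_s^t\|a(\tau)\|_{op}d\tau\le\int_0^t\|a(\tau)\|_{op}d\tau$ and $e^{-\langle\xi,M\xi\rangle}\le1$, this gives the factor $\big(\int_0^t\|a\|_{op}\big)^{\mu'}\Theta_{\mu,\gamma}(t)$ in \eqref{lemma21}. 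The remaining $\xi$-weight is to be collected into $(1+|\xi|)^{\mu'+\mu+|\beta|-|\zeta|}$. The case $m=0$ (empty $\mu'$-sum) is read with the convention that the $\mu'$-factor equals $1$, since $\partial_\xi^0$ of the exponential is bounded by $1$.

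I expect the main obstacle to be the bookkeeping of the powers of $|\xi|$. This means checking that the derivatives of the polynomial factors from the $t$-expansion, when hit by part of $\partial_\xi^m$, are absorbed consistently into the stated weight $(1+|\xi|)^{\mu'+\mu+|\beta|-|\zeta|}$. I would first organise the $t$-expansion so that each of the $\mu$ factors contributes one power of $(1+|\xi|)$ to the weight. Any leftover $\xi$-powers generated there would be handled by using $(1+|\xi|)\ge1$ to trade them within the finite sum over $m,\zeta,\mu',\mu$. Once the weighted integral $I_{m,\zeta,\mu',\mu,\beta}(f(s))$ is reached, the seminorm bound follows from the standard argument. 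One writes
\[
(1+|\xi|)^{p}|\partial_\xi^{\kappa}\hat f|\le(1+|\xi|)^{-n-1}\sup_\xi(1+|\xi|)^{N}|\partial^\kappa_\xi\hat f|,\qquad p=\mu'+\mu+|\beta|-|\zeta|<N-n,\ \kappa=\alpha-m-\zeta,
\]
which is integrable in $\xi$, and uses continuity of $\mathcal{F}$ on $\mathcal{S}$. That continuity gives $\sup_\xi(1+|\xi|)^N|\partial_\xi^\kappa\hat f|\lesssim\sum_{|k'|\le N'}\sum_{|b|\le N}\|f(s)\|_{k',b}$, where $N'>|\kappa|+N+n$ accounts for integrability of $x^\kappa\partial^b f$. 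The case $\gamma=0$, i.e.\ \eqref{lemma22}, is the same computation with no $t$-expansion, so $\mu$ disappears.

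For part $2$, I use \eqref{lemma22} (with $f$ constant in $s$, equal to $v$) to bound each seminorm $\|\operatorname{W}_{s,t}v\|_{\alpha,\beta}$ by $C\sum_{m,\zeta,\mu'}\big(\int_0^t\|a\|_{op}\big)^{\mu'}$ times finitely many seminorms of $v$. On a compact $C\subset\overline\Delta$, $t$ is bounded, and continuity of $a$ on $[0,\infty)$ makes these coefficients uniformly bounded. The diagonal points, where $\operatorname{W}_{t,t}=\mathrm{id}$, are trivially covered. Hence every seminorm of $\operatorname{W}_{s,t}v$ is dominated by a fixed finite combination of seminorms of $v$, uniformly on $C$, which is equicontinuity.

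For part $3$, I use that $\mathcal{W}(\cdot;s,t)\ge0$ with $\|\mathcal{W}(\cdot;s,t)\|_1=1$ (Proposition~\ref{pr1}, part $1$), together with Remark~\ref{partialtimesx}. Young's inequality then gives
\[
\|\partial^\beta\operatorname{W}_{s,t}v\|_{L^\infty}\le\|\partial^\beta v\|_{L^\infty}\quad\text{for all }|\beta|\le q,
\]
so $\|\operatorname{W}_{s,t}\|_{\mathcal{B}(W^{q,\infty})}\le1$ uniformly in $(s,t)$, including the diagonal. This yields equicontinuity on any $C$.
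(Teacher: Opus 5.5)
Your route is the same as the paper's. You work on the Fourier side with $\|\cdot\|_\infty\lesssim\|\widehat{\cdot}\|_1$, apply Leibniz twice, then Fa\`a di Bruno in $t$ and in $\xi$, and convert the weighted integral into seminorms. Part 2 uses \eqref{lemma22} plus compactness and part 3 uses Young with $\|\mathcal{W}(\cdot;s,t)\|_1=1$; both are fine.

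\textbf{The gap.} It is exactly the step you flag as the main obstacle, and it cannot be closed. Each factor from Fa\`a di Bruno in $t$ is $\partial_t^{q}\langle\xi,[A(t)-A(s)]\xi\rangle=\langle\xi,\partial_t^{q}A(t)\xi\rangle$, a quadratic form. When no $\xi$-derivative falls on it, it has size $\|\partial_t^{q}A(t)\|_{\mathrm{op}}|\xi|^2$, not $(1+|\xi|)$. Using $(1+|\xi|)\ge1$ only lets you raise exponents. The Gaussian cannot absorb the surplus uniformly either, since $|\xi|^2e^{-\langle\xi,M\xi\rangle}\le(e\,\lambda_{\min}(M))^{-1}$ blows up as $t\downarrow s$.

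In fact \eqref{lemma21} is false as written for $\gamma\ge1$. Take $n=1$, $a\equiv1$, $\alpha=\beta=0$, $\gamma=1$, $s=0$, $t=R^{-2}$. Let $f(s,\cdot)$ be independent of $s$ with $\hat f(\xi)=\phi(\xi-R)$, where $0\le\phi\in C_c^\infty(-1,1)$ and $\int\phi=1$. Then $|(\partial_t\mathcal{W}(\cdot;0,t)\ast f)(0)|=(2\pi)^{-1}\int\xi^2e^{-t\xi^2}\phi(\xi-R)\,d\xi\gtrsim R^2$. The right-hand side has the single term $m=\zeta=0$, $\mu=1$, and even with your convention for the empty $\mu'$-sum it is $\lesssim\int(1+|\xi|)|\hat f(\xi)|\,d\xi\lesssim R$.

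The paper's proof breaks at the same point: \eqref{estimateupperboundderivative} is used in \eqref{derivativeestimate3} with $r_j=0$, where it fails. The paper also misses the empty $\mu'$-sum for $m'=0$ in \eqref{firstterm}; your convention is the right repair there.

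\textbf{The fix.} Put $2\mu$ in place of $\mu$ in the exponent of $I_{m,\zeta,\mu',\mu,\beta}$ and in the condition on $N$, and start the $\mu'$-sum at $\mu'=0$. After splitting $\partial_\xi^m$ between $e^{-\langle\xi,M\xi\rangle}$ and the polynomial from the $t$-expansion, use
$|\partial_\xi^{m'}e^{-\langle\xi,M\xi\rangle}|\lesssim\sum_{\mu'=0}^{|m'|}\|M\|_{\mathrm{op}}^{\mu'}(1+|\xi|)^{\mu'}$ and
$\bigl|\partial_\xi^{r}\prod_{j=1}^{\mu}\langle\xi,\partial_t^{p_j}A(t)\xi\rangle\bigr|\lesssim\Theta_{\mu,\gamma}(t)(1+|\xi|)^{2\mu}$.
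With that change your argument goes through, including the seminorm bound with a larger $N$. The applications in Propositions \ref{pr3} and \ref{pr7} are unaffected, since they only need some finite seminorm bound. \eqref{lemma22}, which is all part 2 uses, is correct with your convention.

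\textbf{Notation in part 3.} In this paper $W^{q,\infty}(\mathbb{R}^n)$ denotes functions all of whose derivatives lie in $L^q$ (see Remark \ref{notation2} and the proof of Proposition \ref{pr5}), not derivatives of order $\le q$ in $L^\infty$. Your Young argument still gives $\|\partial^\beta\operatorname{W}_{s,t}v\|_q\le\|\partial^\beta v\|_q$ for all $\beta$, which is what is needed.
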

\begin{proof}
We prove the three parts separately. To prove part 1, fix $\alpha, \beta \in \mathbb{N}_0^n$, $\gamma \in \mathbb{N}_0$, and $(s,t) \in \Delta$. Let
$$
F_{s,t}(x) \doteq x^\alpha \big( \partial_t^\gamma \mathcal{W}(\cdot; s, t) \ast \partial_x^\beta f(s, \cdot) \big)(x), \quad \forall x \in \mathbb{R}^n.
$$
Using the inequality
\begin{equation}\label{inequalityestimate}\|F_{s,t}\|_\infty \leq \|\widehat{F_{s,t}}\|_1,\end{equation} we obtain
\begin{align}\label{fst}
\sup_{x\in\mathbb{R}^n}|F_{s,t}(x)| &\leq \int_{\mathbb{R}^n} \big|\widehat{F_{s,t}}(\xi)\big|\,d\xi\notag \\
&= (2\pi)^{-|\alpha|}\int_{\mathbb{R}^n}\Big| \partial_\xi^\alpha \big[ \partial_t^\gamma \widehat{\mathcal{W}}(\xi;s,t)  \widehat{\partial_{\xi}^{\beta}f(s,\xi)} \big] \Big|d\xi
\notag\\
&\leq (2\pi)^{|\beta|-|\alpha|} \int_{\mathbb{R}^n} \Big| \partial_\xi^\alpha \big[ \partial_t^\gamma \widehat{\mathcal{W}}(\xi;s,t) \, \xi^\beta \widehat{f}(s,\xi) \big] \Big|\,d\xi.
\end{align}
In the next step we use the Leibniz rule two times to obtain for all $\xi\in\mathbb{R}^{n}$ and $\zeta\in\mathbb{N}^{n}_{0}$ such that $\zeta<\beta$
\begin{align}
\partial_\xi^\alpha \big[ \partial_t^\gamma \widehat{\mathcal{W}}(\xi;s,t) \, \xi^\beta \widehat{f}(s) \big]
&= \sum_{m \le \alpha} \binom{\alpha}{m} \big(\partial_\xi^m \partial_t^\gamma \widehat{\mathcal{W}}(\xi;s,t)\big) 
\partial_\xi^{\alpha-m} \big[ \xi^\beta \widehat{f}(s, \xi) \big]\notag\\
&= \sum_{m \le \alpha} \sum_{\zeta \le \alpha-m} \binom{\alpha}{m} \binom{\alpha-m}{\zeta} \big(\partial_\xi^m \partial_t^\gamma \widehat{\mathcal{W}}(\xi;s,t)\big)
\big( \partial_\xi^\zeta \xi^\beta \big) \,
\partial_\xi^{\alpha-m-\zeta} \widehat{f}(s,\xi),
\end{align}
with $|\partial_\xi^\zeta \xi^\beta| \lesssim_{\beta,\zeta} (1+|\xi|)^{|\beta|-|\zeta|}$.

We now analyze $\partial_\xi^m \partial_t^\gamma \widehat{\mathcal{W}}(\xi;s,t)$ for all $\xi \in \mathbb{R}^n$, $(s,t)\in\Delta$, $\gamma\in\mathbb{N}_0$, and $m\in\mathbb{N}^{n}_{0}$. Using Fa\`a di Bruno's formula for the time derivative, then applying the Leibniz rule, one has
\begin{align}\label{eqfaleib5}
&\left|\partial_\xi^m \partial_t^\gamma \widehat{\mathcal{W}}(\xi;s,t)\right|= \notag\\
&\left|\sum_{m' \le m} \binom{m}{m'} 
\big( \partial_\xi^{m'} e^{-\langle (A(t)-A(s))\xi, \xi \rangle} \big) 
\partial_\xi^{m-m'}\left(\sum_{\mu=1}^\gamma (-1)^\mu 
\sum_{\substack{p_1+\cdots+p_\mu = \gamma \\ p_j \ge 1}} 
\frac{\gamma!}{p_1!\cdots p_\mu!} \prod_{j=1}^\mu \partial_t^{p_j} \langle (A(t)-A(s))\xi, \xi\rangle\right)\right|\notag\\
&\lesssim_{m} \sum_{m'\leq m}\left|\partial_\xi^{m'} e^{-\langle (A(t)-A(s))\xi, \xi \rangle} \right|\,\left|	\partial_\xi^{m-m'}\left(\sum_{\mu=1}^\gamma (-1)^\mu 
\sum_{\substack{p_1+\cdots+p_\mu = \gamma \\ p_j >0}} 
\frac{\gamma!}{p_1!\cdots p_\mu!} \prod_{j=1}^\mu \partial_t^{p_j} \langle (A(t)-A(s))\xi, \xi\rangle\right)\right|.
\end{align}

Now compute each factor in \eqref{eqfaleib5} separately. 
First, by Fa\`a di Bruno's formula applied to 
$\partial_\xi^{m'} e^{-\langle (A(t)-A(s))\xi, \xi \rangle}$, one has for all $\xi \in \mathbb{R}^n$ and $m' \in \mathbb{N}_0^n$
\begin{align}\label{firstterm}
&\left|\partial_\xi^{m'} e^{-\langle (A(t)-A(s))\xi, \xi \rangle}\right|= \left|e^{-\langle (A(t)-A(s))\xi, \xi \rangle} \sum_{\mu'=1}^{|m'|} (-1)^{\mu'} 
\sum_{\substack{q_1+\cdots+q_{\mu'} = m' \\ q_j \ne 0}} 
\frac{m'!}{q_1!\cdots q_{\mu'}!} \prod_{j=1}^{\mu'} \partial_\xi^{q_j} \langle (A(t)-A(s))\xi, \xi \rangle\right|\notag\\
&\leq m'! \sum_{\mu'=1}^{|m'|} 	\sum_{\substack{q_1+\cdots+q_{\mu'} = m' \\ q_j \ne 0}}\prod_{j=1}^{\mu'} \left|\partial_\xi^{q_j} \langle (A(t)-A(s))\xi, \xi \rangle \right|\leq  m'! \sum_{\mu'=1}^{|m'|} 	\sum_{\substack{q_1+\cdots+q_{\mu'} = m' \\ q_j \ne 0}}\prod_{j=1}^{\mu'} 2\left(\int_0^t \|a(\tau)\|_{\mathrm{op}} d\tau\right) (1+|\xi|)\notag\\
&\lesssim_{m'} \sum_{\mu'=1}^{|m'|} \left(\int_0^t \|a(\tau)\|_{\mathrm{op}} d\tau\right)^{\mu'} (1+|\xi|)^{\mu'},
\end{align}
where we used that for any multi-index $q \in \mathbb{N}^n$,
\begin{align}\label{estimatederivative}
|\partial_\xi^{q} \langle (A(t)-A(s))\xi,\xi\rangle| &\leq 2\|(A(t)-A(s))\|_{\mathrm{op}}(1+|\xi|)\leq
2	\left(	\int_s^t \|a(\tau)\|_{\mathrm{op}} d\tau\right) (1+|\xi|)\notag\\&\leq 2\left(\int_0^t \|a(\tau)\|_{\mathrm{op}} d\tau\right) (1+|\xi|),
\end{align}
which follows by reasoning analogous to \eqref{derivative1}, \eqref{derivative2}, and \eqref{derivative33}. Next, applying the Leibniz rule to the second term of \eqref{eqfaleib5} yields, for all $\xi \in \mathbb{R}^n$ and $m,m' \in \mathbb{N}_0^n$ with $m \ge m'$, 
\begin{align}\label{derivativeestimate3}
&\partial_\xi^{m-m'}\left(\sum_{\mu=1}^\gamma (-1)^\mu 
\sum_{\substack{p_1+\cdots+p_\mu = \gamma \\ p_j \neq 0}} 
\frac{\gamma!}{p_1!\cdots p_\mu!} \prod_{j=1}^\mu \partial_t^{p_j} \langle (A(t)-A(s))\xi, \xi\rangle\right)\notag\\
&= \sum_{\mu=1}^\gamma (-1)^\mu 
\sum_{\substack{p_1+\cdots+p_\mu = \gamma \\ p_j \neq 0}} 
\frac{\gamma!}{p_1!\cdots p_\mu!} \sum_{\substack{r_1+\cdots+r_\mu = m-m' \\ r_j \ge 0}} 
\frac{(m-m')!}{r_1!\cdots r_\mu!} \prod_{j=1}^\mu \partial_\xi^{r_j}  \langle \partial_t^{p_j}A(t)\xi, \xi\rangle\displaybreak\notag\\
&\leq \gamma!(m-m')! \sum_{\mu=1}^\gamma 
\sum_{\substack{p_1+\cdots+p_\mu = \gamma \\ p_j \neq 0}} 
\sum_{\substack{r_1+\cdots+r_\mu = m-m' \\ r_j \ge 0}} 
\prod_{j=1}^\mu \left|\partial_\xi^{r_j}  \langle \partial_t^{p_j}A(t)\xi, \xi\rangle\right|\notag\\
&\leq \gamma!(m-m')! \sum_{\mu=1}^\gamma 
\sum_{\substack{p_1+\cdots+p_\mu = \gamma \\ p_j \neq 0}} 
\sum_{\substack{r_1+\cdots+r_\mu = m-m' \\ r_j \ge 0}} 
\prod_{j=1}^\mu 2\left\|\partial_t^{p_j} A(t)\right\|_{\mathrm{op}} (1+|\xi|)\notag\\
&\lesssim_{\gamma,m,m'} \sum_{\mu=1}^\gamma (1+|\xi|)^{\mu}\left(\max_{0< q<\gamma} \left\|\partial_t^{q} A(t)\right\|_{\mathrm{op}}\right)^{\mu},	 
\end{align}
where for any indices $r\in\mathbb{N}^{n}_{0}$ and $p\in\mathbb{N}$, we used that 
\begin{align}\label{estimateupperboundderivative}
\left|\partial_\xi^{r}  \langle \partial_t^{p}A(t)\xi, \xi \rangle\right|\leq 2\left\|\partial_t^{p} A(t)\right\|_{\mathrm{op}} (1+|\xi|),
\end{align}
which follows by reasoning analogous to \eqref{derivative1}, \eqref{derivative2}, and \eqref{derivative33}.
Therefore, combining  \eqref{firstterm}, \eqref{derivativeestimate3}, we obtain
\begin{align*}
|\partial_\xi^m \partial_t^\gamma \widehat{\mathcal{W}}(\xi;s,t)|&\lesssim_{m,\gamma}  \sum_{m'\leq m}\sum_{\mu'=1}^{|m'|} \sum_{\mu=1}^\gamma 
\left( \int_0^t \|a(\tau)\|_{\mathrm{op}}\,d\tau \right)^{\mu'}  
\left(\max_{0<q<\gamma} \|\partial_t^{q} A(t)\|_{\mathrm{op}}\right)^{\mu} 
(1+|\xi|)^{\mu'+\mu} \\
&\lesssim_{m} \sum_{\mu'=1}^{|m|} \sum_{\mu=1}^\gamma 
\left( \int_0^t \|a(\tau)\|_{\mathrm{op}}\,d\tau \right)^{\mu'} 
\left(\max_{0<q<\gamma} \|\partial_t^{q} A(t)\|_{\mathrm{op}}\right)^{\mu} (1+|\xi|)^{\mu'+\mu}\notag\\
&\doteq \sum_{\mu'=1}^{|m|} \sum_{\mu=1}^\gamma 
\left( \int_0^t \|a(\tau)\|_{\mathrm{op}}\,d\tau \right)^{\mu'} 
\Theta_{\mu,\gamma}(t)	 (1+|\xi|)^{\mu'+\mu},~~~\forall \xi\in\mathbb{R}^{n},~~\forall m\in\mathbb{N}^{n}_{0}.
\end{align*}
Going back to \eqref{fst}, one can obtain
\begin{align*}
&\sup_{x\in\mathbb{R}^n}|F_{s,t}(x)|\notag\\ &\lesssim_{\alpha,\beta,\gamma} \sum_{m \le \alpha} \sum_{\zeta \le \alpha-m} 
\sum_{\mu'=1}^{|m|} \sum_{\mu=1}^\gamma 
\left( \int_0^t \|a(\tau)\|_{\mathrm{op}}\,d\tau \right)^{\mu'}\Theta_{\mu,\gamma}(t)
\int_{\mathbb{R}^n} (1+|\xi|)^{\mu'+\mu+|\beta|-|\zeta|} 
\big| \partial_\xi^{\alpha-m-\zeta} \widehat{f}(s,\xi) \big| \, d\xi\notag\\
&\doteq \sum_{m \le \alpha} \sum_{\zeta \le \alpha-m} 
\sum_{\mu'=1}^{|m|} \sum_{\mu=1}^\gamma 
\left( \int_0^t \|a(\tau)\|_{\mathrm{op}}\,d\tau \right)^{\mu'}\Theta_{\mu,\gamma}(t)I_{m,\zeta,\mu',\mu,\beta}(f(s)).
\end{align*}
We now bound the term $I_{m,\zeta,\mu',\mu,\beta}(f(s)),~~\forall  m,\zeta, \beta\in\mathbb{N}^{n}_{0},~~\forall \mu,\mu'\in\mathbb{N}$. Since $\widehat{f}(s) \in \mathcal{S}(\mathbb{R}^n)$, choose integer $N$ such that
$
N > \mu'+\mu+|\beta|-|\zeta| + n
$.
Then, using the integrability of $(1+|\xi|)^{\mu'+\mu+|\beta|-|\zeta|-N}$ and the fact that
$$
(1+|\xi|)^N \lesssim_{N,n} \sum_{|k| \le N} |\xi^k|,~~\forall \xi\in\mathbb{R}^{n},
$$
we compute
\begin{align*}
I_{m,\zeta,\mu',\mu,\beta}(f(s)) 
&= \int_{\mathbb{R}^n} (1+|\xi|)^{\mu'+\mu+|\beta|-|\zeta|-N} 
(1+|\xi|)^N \big| \partial_\xi^{\alpha-m-\zeta} \widehat{f}(s,\xi) \big| \, d\xi \\
&\lesssim_N \sup_{\xi \in \mathbb{R}^n} (1+|\xi|)^N 
\big| \partial_\xi^{\alpha-m-\zeta} \widehat{f}(s,\xi) \big|\int_{\mathbb{R}^n} (1+|\xi|)^{\mu'+\mu+|\beta|-|\zeta|-N} d\xi \\
&\lesssim_{N,n,\mu,\mu',\beta,\zeta} \sum_{|k| \le N} \sup_{\xi \in \mathbb{R}^n} |\xi^k| 
\big| \partial_\xi^{\alpha-m-\zeta} \widehat{f}(s,\xi) \big|,\\
&~~~\forall  m,\zeta\in\mathbb{N}^{n}_{0},~~\forall \mu,\mu'\in\mathbb{N},~~\text{with}~\alpha-m-\zeta\geq 0.
\end{align*}
Using the relation
\[
\partial_\xi^{\alpha-m-\zeta} \widehat{f}(s,\cdot)(\xi)
=
\widehat{(-2\pi i x)^{\alpha-m-\zeta} f(s,\cdot)}(\xi),~~\forall \zeta,m\in\mathbb{N}^{n}_{0}  ~~\text{with}~~ \alpha-m-\zeta\geq 0,
\]
together with inequality~\eqref{inequalityestimate}, the Leibniz rule, and the formula for the derivative of a monomial, we proceed as follows:
\begin{align*}
&I_{m,\zeta,\mu',\mu,\beta}(f(s))\lesssim_{\alpha,m,\zeta,N,n,\mu,\mu',\beta} \sum_{|k|\leq N}\sup_{\xi\in\mathbb{R}^n}\left|\widehat{\partial_{x}^k((\cdot)^{\alpha-m-\zeta}f(s,\cdot))(\xi)}\right|\notag\\
&\leq\sum_{|k|\leq N}\int\limits_{\mathbb{R}^{n}}\left| \partial_{x}^k(x^{\alpha-m-\zeta}f(s,\cdot))(x)\right|dx\leq\sum_{|k|\leq N}\sum_{\ell\leq k}\binom{k}{\ell}\int\limits_{\mathbb{R}^{n}}|\partial^{\ell}_{x}(x^{\alpha-m-\zeta})\partial^{k-\ell}_{x}f(s,x)|dx\notag\\
&\lesssim_{N} \sum_{|k|\leq N}\sum_{\ell\leq k}\binom{k}{\ell}\binom{\alpha-m-\zeta}{\ell}\int\limits_{\mathbb{R}^{n}}|x^{\alpha-m-\zeta-\ell}\partial^{k-\ell}_{x}f(s,x)|dx\notag\\
&\lesssim_{\alpha,m,\zeta,N} \sum_{|k|\leq N}\sum_{\ell\leq k}\int\limits_{\mathbb{R}^{n}}(1+|x|)^{|\alpha-m-\zeta-\ell|}|\partial^{k-\ell}_{x}f(s,x)|dx,\\
&\forall  m,\zeta \in\mathbb{N}^{n}_{0},~~\forall \mu,\mu'\in\mathbb{N}.
\end{align*}
Now for $N' >| \alpha-m-\zeta|+N+ n$ and use similar reasoning, one has
\begin{align*}
&I_{m,\zeta,\mu',\mu,\beta}(f(s))\lesssim_{	m,\zeta,\mu',\mu,\beta,N,\alpha,n} \sum_{|k|\leq N}\sum_{\ell\leq k}\int\limits_{\mathbb{R}^{n}}(1+|x|)^{|\alpha-m-\zeta-\ell|-N'}(1+|x|)^{N'}|\partial^{k-\ell}_{x}f(s,x)|dx\notag\\
&\lesssim_{\alpha,m,\zeta,N,N',n}\sum_{|k|\leq N}\sum_{\ell\leq k}\sum_{|k'|\leq N'}\sup_{\xi\in\mathbb{R}^n}\left|x^{k'}\partial_x^{k-\ell} f(s,x)\right|= \sum_{|k|\leq N}\sum_{\ell\leq k}\sum_{|k'|\leq N'}\|f(s)\|_{k',k-\ell}
\notag\\&\lesssim_{N,N'}
\sum_{|k'|\le N'}\sum_{|b|\le N}\|f(s)\|_{k',b},\quad\forall m,\zeta \in\mathbb{N}^{n}_{0},~~\forall \mu,\mu'\in\mathbb{N}. 
\end{align*}
The proof for $\gamma=0$ is entirely analogous.	This completes the proof of part 1.

To prove the equicontinuity of the family $\{\operatorname{W}_{s,t}\}_{(s,t)\in C}$ in Part~2, it suffices to show that for every Schwartz seminorm $\|\cdot\|_{\alpha,\beta}$ on $\mathcal{S}(\mathbb{R}^n)$ with $\alpha,\beta\in\mathbb{N}_0^n$, there exist a finite set $\{(\alpha_i,\beta_i)\}_{i=1}^d\subset\mathbb{N}_0^{2n}$, for some $d\in\mathbb{N}$, and a constant $M>0$ such that

\[
\| \operatorname{W}_{s,t} f \|_{\alpha,\beta} 
\le M \sum_{i=1}^{d} \| f \|_{\alpha_i,\beta_i}, 
\qquad \forall f \in \mathcal{S}(\mathbb{R}^n), \ \forall (s,t) \in C.
\]

Fix $(s,t)\in C$ and let $f\in\mathcal{S}(\mathbb{R}^{n})$. We apply the representation corresponding to the case $\gamma=0$ in part $1$ to estimate
\begin{align}
	&\sup_{x\in\mathbb{R}^{n}} |x^{\alpha}\partial^{\beta}_{x} \operatorname{W}_{s,t} f(x)|
	= \sup_{x\in\mathbb{R}^{n}} \big| x^\alpha \partial_x^\beta (\mathcal{W}(\cdot;s,t) \ast f)(x) \big|
	= \sup_{x\in\mathbb{R}^{n}} \big| x^\alpha (\mathcal{W}(\cdot;s,t) \ast \partial_x^\beta f)(x) \big| \notag\\
	&\lesssim_{\alpha,\beta, n}
	\sum_{m\le \alpha}
	\sum_{\zeta\le \alpha-m}
	\sum_{\mu'=1}^{|m|}
	\left(\int\limits_{0}^{t}\| a(\tau)\|_{op}\,d\tau\right)^{\mu'}\, 
	\sum_{|k'|\leq N'} \sum_{|b|\leq N}
	\|f\|_{k', b},
\end{align}
where $\|f\|_{k',b}$ denotes a Schwartz seminorm of $f$, and the parameters $N$ and $N'$ satisfy
$$
N > \mu' + |\beta| - |\zeta| + n, \qquad N' >| \alpha - m - \zeta|+N+n,
$$
where $\alpha, \beta, m, \zeta\in\mathbb{N}^{n}_{0}$, $\mu'\in\mathbb{N}$.
Here, since $C$ is compact, its projection onto the $t$-axis,
$$
\pi_t(C) \doteq \{ t \in \mathbb{R}_+ : \exists s \geq 0~ \text{ such that } (s,t)\in C \},
$$
is also compact. Taking the supremum over $t \in \pi_t(C)$ in all $t$-dependent factors gives an upper bound independent of $(s,t) \in C$, yielding
\begin{align}
	\sup_{x\in\mathbb{R}^{n}} |x^\alpha\partial_x^{\beta} \operatorname{W}_{s,t} f(x)| 
	&\lesssim_{\alpha,\beta}
	\sum_{m\le \alpha}
	\sum_{\zeta\le \alpha-m}
	\sum_{\mu'=1}^{|m|}
	\left(\int\limits_{0}^{\max{\pi_t(C)}}\| a(\tau)\|_{op}\,d\tau\right)^{\mu'}\, 
	\sum_{|b|\leq N}\sum_{|k'|\leq N'} 
	\|f\|_{k', b}.
\end{align}
Hence, the family $\{\operatorname{W}_{s,t}\}_{(s,t)\in C}$ is equicontinuous on $C$, which completes the proof of part $2$.

To prove part~$3$, it suffices to establish the equicontinuity of the family
\[
\{ \operatorname{W}_{s,t} \}_{(s,t)\in C} \subset B(W^{q,\infty}(\mathbb{R}^n)),
\]
which can be achieved by showing the existence of a constant \(M>0\) such that
$$
\|\partial^{\beta}_{x}(\operatorname{W}_{s,t}f) \|_{q} \le M \|\partial^{\beta}_{x} f\|_{q}, 
\quad \forall\beta\in\mathbb{N}_0^n, ~~\forall f \in W^{q,\infty}(\mathbb{R}^n), \ \forall (s,t) \in C.
$$
Similar to the argument in part~$2$, fix $(s,t)\in C$ and let $f\in W^{q,\infty}(\mathbb{R}^{n})$. Using Young's inequality and part~$1$ of Proposition~\ref{pr1}, we have
\begin{align}\label{eqctsWq}
	\|\partial^{\beta}_{x} (\operatorname{W}_{s,t} f)\|_{q} 
	&= \|\partial^{\beta}_{x}(\mathcal{W}(\cdot;s,t) \ast f)\|_{q} 
	= \|\mathcal{W}(\cdot;s,t) \ast \partial^{\beta}_{x} f\|_{q} \notag\\
	&\le \|\mathcal{W}(\cdot;s,t)\|_1 \, \|\partial^{\beta}_{x} f\|_{q} 
	\le \|\partial^{\beta}_{x} f\|_{q} < \infty, \quad \forall\beta\in\mathbb{N}_0^n.
\end{align} 
Since the upper bound in \eqref{eqctsWq} is independent of $(s,t)\in C$, the equicontinuity of 
\(\{\operatorname{W}_{s,t}\}_{(s,t)\in C}\) follows. This completes the proof of part $3$.
\end{proof}

The following remark introduces the necessary notation related to seminorms in locally convex topological spaces $C^{\infty}(\mathbb{R}_+, \mathcal{S}(\mathbb{R}^n))$, $C([0,+\infty),\mathcal{S}(\mathbb{R}^n))$, as well as $C^\infty(\mathbb{R}_+,W^{q,\infty}(\mathbb{R}^n))$ and $C([0,+\infty),W^{q,\infty}(\mathbb{R}^n))$ which will be used in the sequel.
\begin{remark}\label{notation2}
	Let $h\in C^{\infty}(\mathbb{R}_+, \mathcal{S}(\mathbb{R}^n))$ and $K\subset\mathbb{R}_+$ be a compact set. Then, 
	for every $\alpha, \beta\in\mathbb{N}_0^n$, $\gamma\in\mathbb{N}_0$, we denote
	\begin{equation}
		\left\| h \right\|_{\alpha,\beta,\gamma, K} 
		\doteq \sup\limits_{t \in K} \sup\limits_{x \in \mathbb{R}^n} \left| x^\alpha \partial_x^\beta \partial_t^\gamma  h (t, x) \right|.
	\end{equation}
	Moreover, let $h\in C([0,+\infty),\mathcal{S}(\mathbb{R}^n))$ and $b\in\mathbb{N}$. Then, we denote for every $\alpha,\beta \in \mathbb{N}_0^n$
	\begin{equation}
		\|h\|_{\alpha,\beta,b}\doteq \sup_{t \in [0,b]}\|h(t)\|_{\alpha,\beta}.
	\end{equation}
	In addition, let $q\in [1,\infty)$. Let $h\in C^\infty(\mathbb{R}_+,W^{q,\infty}(\mathbb{R}^n))$ and $K\subset\mathbb{R}_+$ be a compact set. Then, 
	for every $j\in\mathbb{N}_0$, $\ell\in\mathbb{N}_0^n$, and  we denote
	$$
	\|h\|_{q,j,\ell,K}\doteq\sup_{t\in K}\left\|\partial^{j}_{t}\partial_{x}^{\ell} h (t)\right\|_q.
	$$	Additionally,
	let $h\in C([0,+\infty),L^q(\mathbb{R}^n))$ and $b\in\mathbb{N}$, we denote
	$$
	\|h\|_{q,b}\doteq \sup_{t\in[0,b]}\|h(t)\|_q.
	$$
	Finally, let $h\in C([0,\infty), W^{q,\infty}(\mathbb{R}^{n}))$ and $b\in\mathbb{N}$. Then, for every $\ell\in\mathbb{N}^{n}_{0}$ we denote
	$$	\|h\|_{q,\ell,b}\doteq \sup_{t\in[0,b]}\|\partial^{\ell}_{x}h(t)\|_q.$$
\end{remark}

Having established the necessary framework, we are now ready to proceed with the key proposition.

\begin{proposition}\label{pr3}
Let $u_0\in \mathcal{S}
(\mathbb{R}^n)$ and $f\in  C^\infty(\mathbb{R}_+, \mathcal{S}(\mathbb{R}^n))\,\cap\, C([0,\infty), \mathcal{S}(\mathbb{R}^n))$. Then the function $u:\mathbb{R}_+\times\mathbb{R}^n\to\mathbb{C}$ defined by
\begin{equation}\label{solution}
	u(t,x)\doteq\operatorname{W}_{0,t}u_0(x)+\int\limits_0^t\operatorname{W}_{s,t}f(s,x)ds,\quad \forall x\in\mathbb{R}^n,~~ \forall t\in\mathbb{R}_+,
\end{equation}
satisfies
\begin{equation}\label{tensor}
	u\in C^\infty(\mathbb{R}_+, \mathcal{S}(\mathbb{R}^n))\,\cap\, C([0,\infty), \mathcal{S}(\mathbb{R}^{n})),
\end{equation}
and the Cauchy problem
\begin{equation}\label{cauchy}
\begin{cases}
	\partial_t u(t,x)-\mathcal{L}_{t}u(t,x)=f(t,x),\quad \forall x\in\mathbb{R}^n,~~\forall t\in\mathbb{R}_+,\\
	\lim\limits_{t\to0+}u(t,\cdot)=u_0.
\end{cases}
\end{equation}
Moreover, if $u_0\ge0$ and $f\ge0$ then $u\ge0$.
\end{proposition}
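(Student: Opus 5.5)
We split $u=u_1+u_2$ with $u_1(t)\doteq\operatorname{W}_{0,t}u_0$ and $u_2(t)\doteq\int_0^t\operatorname{W}_{s,t}f(s)\,ds$, and treat the homogeneous and inhomogeneous parts separately.

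\textbf{Homogeneous part.} By part 2 of Proposition \ref{pr2}, $t\mapsto u_1(t)$ lies in $C^\infty(\mathbb{R}_+,\mathcal{S}(\mathbb{R}^n))$. By part 4 it satisfies $\partial_tu_1-\mathcal{L}_tu_1=0$.

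For continuity at $t=0$ in $\mathcal{S}(\mathbb{R}^n)$ we pass to the Fourier side, where $\widehat{u_1(t)}=e^{-\langle A(t)\xi,\xi\rangle}\widehat{u_0}$. We repeat the argument of part 5 of Proposition \ref{pr2} with $s=\delta=0$, using Lemma \ref{upperboundderivative}, the bound $\|A(t)\|_{op}\le t\max_{[0,1]}\|a\|_{op}$, and the elementary estimate $|e^{-y}-1|\le y$. This gives $\widehat{u_1(t)}\to\widehat{u_0}$ in $\mathcal{S}$. Since the Fourier transform is an isomorphism of $\mathcal{S}$, we get $u_1(t)\to u_0$ in $\mathcal{S}$, which is the initial condition.

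\textbf{Inhomogeneous part: well-definedness and continuity at $0$.} The map $s\mapsto\operatorname{W}_{s,t}f(s)$ is continuous on $[0,t]$ with values in $\mathcal{S}$. Indeed, write
$$\operatorname{W}_{s,t}f(s)-\operatorname{W}_{s',t}f(s')=\operatorname{W}_{s,t}(f(s)-f(s'))+(\operatorname{W}_{s,t}-\operatorname{W}_{s',t})f(s').$$
The first term is controlled by the equicontinuity of part 2 of Lemma \ref{avoidingstot} on the compact set $\{(s,t')\in\overline\Delta: t'\le b\}$. The second term is controlled by part 2 of Proposition \ref{pr2}, together with the convention $\operatorname{W}_{t,t}=\mathrm{id}$ and part 5 of Proposition \ref{pr2} at the endpoint $s=t$. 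Hence the integral exists as an $\mathcal{S}$-valued Riemann integral.

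Equicontinuity also yields, for every seminorm,
$$\|u_2(t)\|_{\alpha,\beta}\le t\,M\sum_i\sup_{s\in[0,t]}\|f(s)\|_{\alpha_i,\beta_i},$$
using $f\in C([0,\infty),\mathcal{S})$. So $u_2(t)\to0$ as $t\to0^+$, and continuity of $u_2$ on $[0,\infty)$ follows similarly.

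\textbf{Inhomogeneous part: smoothness and the equation.} By Remark \ref{Leibniz},
$$\partial_tu_2(t)=f(t)+\int_0^t\partial_t\operatorname{W}_{s,t}f(s)\,ds=f(t)+\int_0^t\mathcal{L}_t\operatorname{W}_{s,t}f(s)\,ds=f(t)+\mathcal{L}_tu_2(t).$$
Here we use part 4 of Proposition \ref{pr2}, and we pull $\mathcal{L}_t$ out of the integral because it is a continuous operator on $\mathcal{S}$. Higher $t$-derivatives exist by Remark \ref{Leibniz}, so $u_2\in C^\infty(\mathbb{R}_+,\mathcal{S})$.

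\textbf{Main obstacle.} The delicate point is justifying differentiation under the integral near the diagonal $s=t$. There $\partial_t^\gamma\mathcal{W}(\cdot;s,t)$ is singular, because $\partial_t\widehat{\mathcal{W}}=-\langle a(t)\xi,\xi\rangle\widehat{\mathcal{W}}$ is bounded in $\xi$ only polynomially, not uniformly small. I would handle it on the Fourier side via part 1 of Lemma \ref{avoidingstot}. That lemma bounds $x^\alpha(\partial_t^\gamma\mathcal{W}\ast\partial^\beta f(s))$ by Schwartz seminorms of $f(s)$, times factors continuous in $t$ and uniform in $s\in[0,t]$. This gives dominated convergence for the difference quotients in every seminorm.

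\textbf{Positivity.} $\mathcal{W}>0$ pointwise, so $\operatorname{W}_{s,t}$ maps nonnegative functions to nonnegative functions. Hence both $u_1$ and the integrand of $u_2$ are nonnegative, and therefore $u\ge0$.
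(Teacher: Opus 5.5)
Your proposal is correct and follows essentially the same route as the paper. Both arguments split $u$ into $\operatorname{W}_{0,t}u_0$ and the Duhamel integral, use equicontinuity from Lemma \ref{avoidingstot} to get continuity of $s\mapsto\operatorname{W}_{s,t}f(s)$, use Remark \ref{Leibniz} with the bounds of part 1 of Lemma \ref{avoidingstot} to differentiate under the integral, use Proposition \ref{pr2} for the equation and the initial limit, and use positivity of $\mathcal{W}$. Your handling of $t\to0^+$ (rerunning the Fourier-side argument of part 5 of Proposition \ref{pr2} at $s=\delta=0$, and using the factor $t$ in the bound for $\|u_2(t)\|_{\alpha,\beta}$) is more careful than the paper's: the paper only shows that the $\sup_{t\in[0,b]}$ seminorms are finite, and then invokes part 5 of Proposition \ref{pr2}, which is stated only for $s>0$.
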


\begin{proof} 
To show \eqref{tensor}, it suffices to prove the following:
\begin{align}\label{seminorm1}
	&\operatorname{W}_{0,\cdot}u_{0} \in C^{\infty}(\mathbb{R}_{+},\mathcal{S} (\mathbb{R}^n))\cap C([0,\infty),\mathcal{S}(\mathbb{R}^{n})),
\end{align}
i.e.
\begin{align}\label{seminorm2}
&\forall \alpha,\beta\in \mathbb{N}^{n}_{0},~~\forall \gamma\in \mathbb{N}_{0}, ~~\forall~\text{compact}~K\subset \mathbb{R}_{+}:\notag\\	&\|\operatorname{W}_{0,\cdot}u_{0}\|_{\alpha,\beta,\gamma,K}=\sup_{x\in\mathbb{R}^n}\sup_{t\in K\subset \mathbb{R}_{+}}\left|x^{\alpha}\partial^{\beta}_{x}\partial^{\gamma}_{t}\operatorname{W}_{0,t}u_{0}(x)\right|<\infty,\notag\\
\end{align}
\begin{align}\label{cntsatzero}
	& \forall b\in \mathbb{N},~~\forall\alpha,\beta\in \mathbb{N}^{n}_{0}:\notag\\
&\|\operatorname{W}_{0,\cdot}u_{0}\|_{\alpha,\beta,b}=\sup_{t\in[0,b]}\sup_{x\in\mathbb{R}^n}\left|x^\alpha\partial_x^\beta\left(\mathrm{W}_{0,t} u_{0}\right) (x) \right|<\infty,
\end{align}
together with
\begin{align}\label{seminorm3}
	&\int\limits_0^\cdot\operatorname{W}_{s,\cdot}f(s)ds\in C^{\infty}(\mathbb{R}_{+},\mathcal{S}( \mathbb{R}^n))\cap C([0,\infty),\mathcal{S}(\mathbb{R}^{n})),
\end{align}
i.e.
\begin{align}\label{seminorm4}
&\forall \alpha,\beta\in \mathbb{N}^{n}_{0},~~\forall \gamma\in \mathbb{N}_{0},~~\forall~ \text{compact}~K\subset \mathbb{R}_{+}:\notag\\	&\left\|\int\limits_0^\cdot\operatorname{W}_{s,\cdot}f(s)ds\right\|_{\alpha,\beta,\gamma,K}=\sup_{x\in\mathbb{R}^n}\sup_{t\in K\subset \mathbb{R}_{+}}\left|x^{\alpha}\partial^{\beta}_{x}\partial^{\gamma}_{t}\int\limits_0^t\operatorname{W}_{s,t}f(s,x)ds\right|<\infty,
\end{align}
and finally,
\begin{align}\label{cntsatzerointegral}
	&\forall b\in \mathbb{N},~~\forall\alpha,\beta\in \mathbb{N}^{n}_{0}:\notag\\
	&\left\|\int\limits_0^\cdot\operatorname{W}_{s,\cdot}f(s)ds\right\|_{\alpha,\beta,b}=\sup_{t\in [0,b]}\sup_{x\in\mathbb{R}^n}\left| x^\alpha\partial_x^\beta\int\limits_0^t\operatorname{W}_{s,t}f(s,x)ds\right|<\infty. 
\end{align}
To prove \eqref{seminorm2}, observe that since $\mathcal{W}\in C^\infty(\Delta, \mathcal{S}(\mathbb{R}^n))$, the mapping \(t \mapsto \mathcal{W}( \cdot; 0, t) \in \mathcal{S}(\mathbb{R}^n)\) belongs to \(C^\infty(\mathbb{R}_+, \mathcal{S}(\mathbb{R}^n))\). Therefore, for every compact $K\subset \mathbb{R}_+$, one has
\begin{align*}
	\sup_{x\in\mathbb{R}^n}\sup_{t\in K\subset \mathbb{R}_{+}}\left|x^{\alpha}\partial^{\beta}_{x}\partial^{\gamma}_{t}\operatorname{W}_{0,t}u_{0}(x)\right|&=\sup_{x\in\mathbb{R}^n}\sup_{t\in K\subset \mathbb{R}_{+}}\left|x^{\alpha}\partial^{\beta}_{x}\partial^{\gamma}_{t}((\mathcal{W}(\cdot;0,t)\ast u_{0})(x))\right|\\
	&=\sup_{x\in\mathbb{R}^n}\sup_{t\in K\subset \mathbb{R}_{+}}\left|x^{\alpha}\partial^{\beta}_{x}\big(\partial^{\gamma}_{t}\mathcal{W}(\cdot;0,t)\ast u_{0}\big)\right|.
\end{align*}
Moreover, the fact that $\mathcal{W}\in C^\infty(\Delta, \mathcal{S}(\mathbb{R}^n))$ implies that the mapping $x\mapsto \partial^{\gamma}_{t}\mathcal{W}(\cdot,0,t)\in \mathcal{S}(\mathbb{R}^n)$ belongs to $C^{\infty}(\mathbb{R}^{n},\mathcal{S}(\mathbb{R}^{n}))$. Now, by continuing the above computation, we get
\begin{align*}
	\sup_{x\in\mathbb{R}^n}\sup_{t\in K\subset \mathbb{R}_{+}}\left|x^{\alpha}\partial^{\beta}_{x}\partial^{\gamma}_{t}\operatorname{W}_{0,t}u_{0}(x)\right|&=\sup_{x\in\mathbb{R}^n}\sup_{t\in K\subset \mathbb{R}_{+}}\left|x^{\alpha}\big(\partial^{\gamma}_{t}\mathcal{W}(\cdot;0,t)\ast \partial^{\beta}_{x}u_{0}\big)(x)\right|.
\end{align*}
Since $u_0 \in \mathcal{S}(\mathbb{R}^n)$, using Lemma \ref{avoidingstot}, we obtain
\begin{align}\label{lemma210}
	&\sup_{x \in \mathbb{R}^n} \Big| x^\alpha \big( \partial_t^\gamma \mathcal{W}(\cdot; s, t) \ast \partial_x^\beta u_0(\cdot) \big)(x) \Big| \notag\\
	&\lesssim_{\alpha, \beta, \gamma}
	\sum_{m \le \alpha} \sum_{\zeta \le \alpha-m} 
	\sum_{\mu'=1}^{|m|} \sum_{\mu=1}^\gamma 
	\left( \int_0^t \|a(\tau)\|_{\mathrm{op}} \, d\tau \right)^{\mu'} 
	\Theta_{\mu,\gamma}(t) 
	\sum_{|k'|\le N'}\sum_{|b|\le N}\|u_0\|_{k',b},\notag\\
	&\forall \alpha, \beta\in\mathbb{N}_0,~~ N > \mu'+\mu+|\beta|-|\zeta| + n, ~~
	N' > |\alpha-m-\zeta|+ N + n,
\end{align}
where
$$
\Theta_{\mu,\gamma}(t)\doteq
\left(\max_{1 \le q \le \gamma} \|\partial_t^{q} A(t)\|_{\mathrm{op}}\right)^{\mu},
$$

 and $\|\cdot\|_{k',b}$ denotes a Schwartz seminorm on $\mathcal{S}(\mathbb{R}^n)$.
Thus, taking supremum over $t \in K \subset \mathbb{R}_{+}$, we obtain
\begin{align*}
	&\sup_{x\in\mathbb{R}^n}\sup_{t\in K\subset \mathbb{R}_{+}}
	\left|x^{\alpha}\partial^{\beta}_{x}\partial^{\gamma}_{t}\operatorname{W}_{0,t}u_{0}(x)\right|\lesssim_{\alpha, \beta, \gamma, K}\notag\\
	&\sum_{m \le \alpha} \sum_{\zeta \le \alpha-m} 
	\sum_{\mu'=1}^{|m|} \sum_{\mu=1}^\gamma	\left(\sup_{\tau\in [0,\max K]}\|a(\tau)\|_{\mathrm{op}}\right)^{\mu'}
	\left(\sup_{t\in K}\max_{1 \le q \le \gamma} \|\partial_t^{q} A(t)\|_{\mathrm{op}}\right)^{\mu}\sum_{|k'|\le N'}\sum_{|b|\le N}\|u_0\|_{k',b}
	< \infty.\notag
\end{align*}
This completes the poof of \eqref{seminorm2}.

To prove \eqref{cntsatzero}, let $b\in\mathbb{N}$. Since $u_{0}\in\mathcal{S}(\mathbb{R}^n)$, similar to the proof of \eqref{seminorm2}, it follows from Lemma   \ref{avoidingstot} and continuity of the matrix $a$ at zero that
\begin{align}
	&\sup_{t\in[0,b]}\sup_{x\in\mathbb{R}^{n}} |x^\alpha\partial_x^{\beta} \operatorname{W}_{0,t} u_0(x)|\notag\\
	&\lesssim_{\alpha,\beta}
	\sum_{m\le \alpha}
	\sum_{\zeta\le \alpha-m}
	\sum_{\mu'=1}^{|m|}
	\sup_{t\in[0,b]}\left(\int\limits_{0}^{t}\| a(\tau)\|_{op}\,d\tau\right)^{\mu'}\, 
	\sum_{|b|\leq N}\sum_{|k'|\leq N'} 
	\|u_0\|_{k', b}\notag\\
	&\leq 	\sum_{m\le \alpha}
	\sum_{\zeta\le \alpha-m}
	\sum_{\mu'=1}^{|m|}
	\sup_{t\in[0,b]}\left(t\sup_{\tau\in[0,t]}\| a(\tau)\|_{op}\right)^{\mu'}\, 
	\sum_{|b|\leq N}\sum_{|k'|\leq N'} 
	\|u_0\|_{k', b}\notag\\
	&\lesssim_{b,\alpha}	\sum_{m\le \alpha}
	\sum_{\zeta\le \alpha-m}
	\sum_{\mu'=1}^{|m|}
	\left(\sup_{\tau\in[0,b]}\| a(\tau)\|_{op}\right)^{\mu'}\, 
	\sum_{|b|\leq N}\sum_{|k'|\leq N'} 
	\|u_0\|_{k', b}\\
	&\forall \alpha, \beta\in\mathbb{N}_0,~~ N > \mu'+\mu+|\beta|-|\zeta| + n, ~~
	N' > |\alpha-m-\zeta|+ N + n.\notag
\end{align}
To prove \eqref{seminorm4}, note that 
$\mathcal{W}(\cdot; s,t) \ast f(s) \in \mathcal{S}(\mathbb{R}^n)$ 
and the convolution converges locally uniformly in $x$. 
The equicontinuity of the family 
$\{\operatorname{W}_{s,t}\}_{(s,t)\in C}$ on compact sets 
$C \subset \overline{\Delta}$ as stated in Lemma \ref{avoidingstot}, ensures that
\[
[0,t] \ni s \longmapsto \operatorname{W}_{s,t}f(s)\in\mathcal{S}(\mathbb{R}^{n}),
\]
is continuous.
By the same argument, the mapping
\[
\overline{\Delta} \cap [0,\max K]^2 \ni (s,t) 
\longmapsto \operatorname{W}_{s,t}\partial^{\beta}_{x}f(s)
\in \mathcal{S}(\mathbb{R}^n),
\]
is jointly continuous. Hence, by \eqref{lemma21} in Lemma~\ref{avoidingstot}, we obtain

\begin{align}\label{AKMR2}
	&\sup_{x\in\mathbb{R}^n} \sup_{t\in K\subset \mathbb{R}_{+}}\left|x^{\alpha}\partial^{\beta}_{x}\partial^{\gamma}_{t}\int_0^t\operatorname{W}_{s,t}f(s,x)\,ds\right|\notag\\
	&=\sup_{x\in\mathbb{R}^n} \sup_{t\in K\subset \mathbb{R}_{+}}\left|x^{\alpha}\partial^{\beta}_{x}\partial^{\gamma}_{t}\int_0^tf(s)\ast\mathcal{W}(\cdot;s,t)(x)\,ds\right|\notag\\
	&=\sup_{x\in\mathbb{R}^n} \sup_{t\in K\subset \mathbb{R}_{+}}\left|x^{\alpha}\partial^{\beta}_{x}\Big[\gamma\partial^{\gamma-1}_{t}f(t,x)
	+\int_0^tf(s)\ast\partial_{t}^\gamma\mathcal{W}(\cdot;s,t)(x)\,ds\Big]\right|\notag\\
	&=\sup_{x\in\mathbb{R}^n}\sup_{t\in K\subset \mathbb{R}_{+}}\left|\gamma x^{\alpha}\partial^{\beta}_{x}\partial_{t}^{\gamma-1}f(t,x)
	+x^{\alpha}\partial^{\beta}_{x}\int_{0}^{t}f(s)\ast\partial_{t}^\gamma\mathcal{W}(\cdot;s,t)(x)\,ds\right|\notag\\
	&\leq\sup_{x\in\mathbb{R}^n} \sup_{t\in K\subset \mathbb{R}_{+}}\left|\gamma x^{\alpha}\partial^{\beta}_{x}\partial_{t}^{\gamma-1}f(t,x)\right|
	+\sup_{x\in\mathbb{R}^n} \sup_{t\in K\subset \mathbb{R}_{+}}\left| x^{\alpha}\partial^{\beta}_{x}\int_{0}^{t}f(s)\ast\partial_{t}^\gamma\mathcal{W}(\cdot;s,t)(x)\,ds\right|\notag\\	
	&\leq \sup_{x\in\mathbb{R}^n} \sup_{t\in K\subset \mathbb{R}_{+}}\left|\gamma x^{\alpha}\partial^{\beta}_{x}\partial_{t}^{\gamma-1}f(t,x)\right|
	+\sup_{x\in\mathbb{R}^n} \sup_{t\in K\subset \mathbb{R}_{+}} \int_{0}^{t} \left| x^{\alpha}\left( \partial^{\beta}_{x}f(s)\ast \partial_{t}^\gamma\mathcal{W}(\cdot;s,t)(x)\right)\right|\,ds\notag\\
	&\leq
	\gamma\sup_{t\in K}\sup_{x\in\mathbb{R}^{n}}
	\left|x^{\alpha}\partial_{x}^{\beta}\partial_{t}^{\gamma-1}f(t,x)\right|
	+\sup_{t\in K}\int_{0}^{t} \sup_{x\in\mathbb{R}^n}\left|x^{\alpha} \left( \partial_{t}^{\gamma}\mathcal{W}(\cdot;s,t)
	\ast \partial_x^{\beta} f(s,\cdot)\right)(x) \right| ds\displaybreak\notag\\
	&\lesssim_{\alpha,\beta,\gamma}\sup_{t\in K}\sup_{x\in\mathbb{R}^{n}}
	\left|x^{\alpha}\partial_{x}^{\beta}\partial_{t}^{\gamma-1}f(t,x)\right|\notag\\
	&+ \sum_{m \le \alpha} \sum_{\zeta \le \alpha-m} 
	\sum_{\mu'=1}^{|m|} \sum_{\mu=1}^\gamma 
	\left( \int_0^{\max (K)} \|a(\tau)\|_{\mathrm{op}} \, d\tau \right)^{\mu'} \sup_{t\in K}\Theta_{\mu,\gamma}(t)\,
	\int\limits_{0}^t I_{m,\zeta,\mu',\mu,\beta}(f(s)) ds \notag\\
	&\lesssim_{\alpha,\beta,\gamma}\sup_{t\in K}\sup_{x\in\mathbb{R}^{n}}
	\left|x^{\alpha}\partial_{x}^{\beta}\partial_{t}^{\gamma-1}f(t,x)\right|\notag\\
	&+ \max K \sum_{m \le \alpha} \sum_{\zeta \le \alpha-m} 
	\sum_{\mu'=1}^{|m|} \sum_{\mu=1}^\gamma 
	\left( \int_0^{\max (K)} \|a(\tau)\|_{\mathrm{op}} \, d\tau \right)^{\mu'} \sup_{t\in K}\Theta_{\mu,\gamma}(t)\,
    \max_{s\in[0,t]} I_{m,\zeta,\mu',\mu,\beta}(f(s)) < \infty,\\
    &\forall \alpha,\beta\in\mathbb{N}^{n}_{0},~~\forall \gamma\in\mathbb{N}_{0},~~\forall K\subset \mathbb{R}_{+} \text{compact}.\notag
\end{align}
The finiteness of the last line follows from the fact that 
$f\in C^{\infty}(\mathbb{R}_{+},\mathcal{S}(\mathbb{R}^{n}))$ and that the map
$$
s \mapsto I_{m,\zeta,\mu',\mu,\beta}(f(s)) 
\doteq \int\limits_{\mathbb{R}^n} (1 + |\xi|)^{\mu' + \mu + |\beta| - |\zeta|}
\big| \partial_\xi^{\alpha - m - \zeta} \hat{f}(s, \xi) \big| d\xi,
$$
is continuous on $[0,t]$, hence bounded.
To establish \eqref{cntsatzerointegral}, let $b \in \mathbb{N}$. Then we have
\begin{align}
	&\sup_{t\in[0,b]}\sup_{x\in\mathbb{R}^n} \left|x^{\alpha}\partial^{\beta}_{x}\int_0^t\operatorname{W}_{s,t}f(s,x) ds\right|
	\notag\\
	&\leq\sum_{m\le \alpha}
	\sum_{\zeta\le \alpha-m}
	\sum_{\mu'=1}^{|m|}\sup_{t\in[0,b]}t^{\mu'}\left(\sup_{\tau\in[0,t]}\|a(\tau)\|_{op}\right)^{\mu'}	\sum_{|b|\le N}\sum_{|k'|\le N'}t \sup_{s\in [0,t]}\|f(s)\|_{k',b}<\infty,\notag\\
	&\lesssim_{\alpha, b}\sum_{m\le \alpha}
	\sum_{\zeta\le \alpha-m}
	\sum_{\mu'=1}^{|m|}\left(\sup_{\tau\in[0,b]}\|a(\tau)\|_{op}\right)^{\mu'}	\sum_{|b|\le N}\sum_{|k'|\le N'} \sup_{s\in [0,b]}\|f(s)\|_{k',b}<\infty,\\
	&\forall \alpha, \beta\in\mathbb{N}_0, ~~N > \mu'+\mu+|\beta|-|\zeta| + n, ~~
	N' > |\alpha-m-\zeta|+N + n.\notag
\end{align}
Thus, the proof of \eqref{cntsatzerointegral} is complete.
To show that \eqref{solution} satisfies \eqref{cauchy}, by using part $5$ in Proposition \ref{pr2}, we may compute $\partial _{t}u(t,x)$ and $\mathcal{L}_tu(t,x)$ separately. Namely,
\begin{align}\label{utsatisfying}
	\partial _{t}u(t,x)&=\partial_{t}\operatorname{W}_{0,t}u_{0}(x)+\partial_{t}\left(\int\limits_0^t\operatorname{W}_{s,t}f(s,x)ds\right)\notag\\
	&=\partial_{t}(\mathcal{W}(\cdot;0,t)\ast u_{0})(x)+\operatorname{W}_{t,t}f(t,x)+\int\limits_0^t\partial_{t}\operatorname{W}_{s,t}f(s,x)ds\notag\\
	&=\left(\partial_{t}\mathcal{W}(\cdot;0,t)\ast u_0\right)(x)+f(t,x)+\int\limits_{0}^t\partial_{t}(\mathcal{W}(\cdot;s,t)\ast f(s,\cdot))(x)ds\notag\\
	&=\partial_{t}\mathcal{W}(\cdot;0,t)\ast u_{0}(x) +f(t,x)+\int\limits_{0}^t\partial_{t}\mathcal{W}(\cdot;s,t)\ast f(s,\cdot)(x)ds,\quad\forall x\in\mathbb{R}^n,~ \forall t\in\mathbb{R}_+.
\end{align}
To compute $\mathcal{L}_{t}u(t,x)$, we use Remark \ref{partialtimesx}. Accordingly, we can write
\begin{align}\label{ltsatisfying}
	\mathcal{L}_{t}u(t,x)&=\mathcal{L}_{t}(\operatorname{W}_{0,t}u_{0}(x))+\mathcal{L}_{t}\left(\int\limits_0^t\operatorname{W}_{s,t}f(s,x)ds\right)\notag\\
	&=\mathcal{L}_{t}(\mathcal{W}(\cdot;0,t)\ast u_{0}(x))+\mathcal{L}_{t}\left(\int\limits_0^t\mathcal{W}(\cdot;s,t)\ast f(s,\cdot)(x)ds\right)\notag\\
	&=\sum_{i,j=1}^{n} a_{ij}(t)\partial^2_{x_ix_j}(\mathcal{W}(\cdot;0,t)\ast u_{0}(x))+\sum_{i,j=1}^{n} a_{ij}(t)\partial^2_{x_ix_j}\left(\int\limits_0^t\mathcal{W}(\cdot;s,t)\ast f(s,\cdot)(x)ds\right)\notag\\
	&=\sum_{i,j=1}^{n} a_{ij}(t)(\partial^2_{x_ix_j}\mathcal{W}(\cdot;0,t))\ast u_{0}(x)+\sum_{i,j=1}^{n} a_{ij}(t)\int\limits_0^t\partial^2_{x_ix_j}\left( \mathcal{W}(\cdot;s,t)\ast f(s,\cdot)(x)\right) ds\notag\\
	&=\left(\sum_{i,j=1}^{n} a_{ij}(t)\partial^2_{x_ix_j}\mathcal{W}(\cdot;0,t)\ast u_{0}\right)(x)+\int\limits_0^t\sum_{i,j=1}^{n} a_{ij}(t)(\partial^2_{x_ix_j}\mathcal{W}(\cdot;s,t))\ast f(s,\cdot)(x)ds\notag\\
	&=\mathcal{L}_{t}\mathcal{W}(\cdot,0,t)\ast u_{0}(x)+\int\limits_0^t\mathcal{L}_{t}\mathcal{W}(\cdot,s,t)\ast f(s,\cdot)(x)ds,\quad\forall x\in\mathbb{R}^n,~~ \forall t\in\mathbb{R}_+.
\end{align}
Now, by using \eqref{utsatisfying} and \eqref{ltsatisfying}, we conclude that
\begin{align}
	\partial_{t}u(t,x)-\mathcal{L}_{t}u(t,x)&=\partial_{t}\mathcal{W}(\cdot;0,t)\ast u_{0}(x) +f(t,x)+\int\limits_{0}^t\partial_{t}\mathcal{W}(\cdot;s,t)\ast f(s,\cdot)(x)ds\notag\\
	&-\mathcal{L}_{t}\mathcal{W}(\cdot,0,t)\ast u_{0}(x)-\int\limits_0^t\mathcal{L}_{t}\mathcal{W}(\cdot,s,t)\ast f(s,\cdot)(x)ds\notag\\
	&=(\partial_{t}\mathcal{W}(\cdot;0,t)-\mathcal{L}_{t}\mathcal{W}(\cdot;0,t))\ast u_{0}(x)+f(t,x)\notag\\
	&+\int\limits_{0}^t(\partial_{t}\mathcal{W}(\cdot;s,t)-\mathcal{L}_{t}\mathcal{W}(\cdot;s,t))\ast f(s,\cdot)(x)ds\notag\\
	&=f(t,x), \quad\forall x\in\mathbb{R}^n,~\forall t\in\mathbb{R}_+.
\end{align}
The last line follows from part 2 of Proposition \ref{pr1}. Additionally, we have 
\begin{equation*}
	\lim\limits_{t\to0+}u(t,\cdot)=u_0, \quad\forall t\in \mathbb{R}_{+}.
\end{equation*}
The reason is that when we take limit from \eqref{solution} as $t$ goes to $0+$, using part $5$ of Proposition \ref{pr2}, we have 
\begin{align}
	\lim_{t\rightarrow 0^{+}}u(t,\cdot)&=\lim_{t\rightarrow 0^{+}}\operatorname{W}_{0,t}u_0+\lim_{t\rightarrow 0^{+}}\int\limits_0^t\operatorname{W}_{s,t}f(s,\cdot)ds =u_{0},\quad\forall t\in\mathbb{R}_+.
\end{align} 
To verify the non-negativity property of the solution $u$,  we note that positivity of $$\mathcal{W}(x; s, t)>0,~~ \forall x \in \mathbb{R}^n, ~ \forall(s, t) \in \Delta,$$ as well as non-negativity of $u_{0}$ and $f$ imply the following: 
$$
\operatorname{W}_{0,t} u_{0}(x) = \left(\mathcal{W}(\cdot; 0, t)\ast u_0\right) (x) \geq 0,\quad\forall x\in\mathbb{R}^n,~~\forall t\in\mathbb{R}_{+},
$$
as well as  
$$
\int\limits_{0}^{t} \operatorname{W}_{s,t} f(s, x) ds = \int\limits_{0}^{t} \mathcal{W}(\cdot; s, t) \ast f(s)(x) ds \geq 0,\quad\forall x\in\mathbb{R}^n,~~\forall t\in\mathbb{R}_{+}.
$$
Hence, by \eqref{solution} the solution $u$  remains non-negative. Thus, the proof is complete. 
\end{proof}

We now derive a collection of estimates, including the energy estimate and $L^p$-$L^q$
bounds, which will be used in subsequent arguments.

\begin{lemma}[Energy estimate]\label{energyestim}
Let $1<q<\infty$, and let $u \in C^{1}(\mathbb{R}_{+}, L^{q}(\mathbb{R}^{n}))\,\cap\, C([0,\infty), L^q(\mathbb{R}^n))\,\cap\, C(\mathbb{R}_{+}, W^{q,2}(\mathbb{R}^{n}))$, satisfy the homogeneous heat equation 
\begin{equation}\label{homheat}
	\partial_t u(t,x)-\mathcal{L}u(t,x)= 0,
\end{equation}

with initial data $u(0,\cdot)=u_0\in L^q(\mathbb{R}^n)$. Define the following energy functional on $L^q(\mathbb{R}^n)$:
\begin{equation}\label{energy}
	E(v)\doteq \|v\|^{q}_{q}=\int\limits_{\mathbb{R}^n}|v(x)|^{q}dx.
\end{equation}
	Then, the mapping  $t\mapsto E(u(t)),$ is non-increasing on $\mathbb{R}_+$. In particular, the following inequality holds:
\begin{align}\label{energy12}
	\|u(t,\cdot)\|_q\leq \|u_0\|_q,\quad\forall t\in\mathbb{R}_+.
\end{align}
\end{lemma}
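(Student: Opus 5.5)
We will differentiate the energy in time and show the derivative is non-positive, using integration by parts and the positive definiteness of $a(t)$. Fix $t\in\mathbb{R}_+$. Since $u\in C^1(\mathbb{R}_+,L^q(\mathbb{R}^n))$ and the map $v\mapsto\|v\|_q^q$ is Fréchet differentiable on $L^q$ for $1<q<\infty$, with derivative $h\mapsto q\,\mathrm{Re}\int|v|^{q-2}\bar v\,h\,dx$ (interpreted as $0$ where $v=0$), the chain rule gives
\begin{align*}
\frac{d}{dt}E(u(t))=q\,\mathrm{Re}\int_{\mathbb{R}^n}|u|^{q-2}\bar u\,\partial_tu\,dx
=q\,\mathrm{Re}\int_{\mathbb{R}^n}|u|^{q-2}\bar u\,\langle\partial_x,a(t)\partial_x\rangle u\,dx,
\end{align*}
where the second equality uses \eqref{homheat}. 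The right side is well defined by Hölder, since $|u|^{q-2}\bar u\in L^{q'}$ and $\partial^2_{x_ix_j}u(t)\in L^q$ because $u(t)\in W^{q,2}$.

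Next I would integrate by parts once in $x$. Because $a(t)$ does not depend on $x$, this gives
\begin{align*}
\frac{d}{dt}E(u(t))=-q\,\mathrm{Re}\int_{\mathbb{R}^n}\big\langle\partial_x(|u|^{q-2}\bar u),a(t)\partial_xu\big\rangle dx .
\end{align*}
For real-valued $u$ one has $\partial_x(|u|^{q-2}u)=(q-1)|u|^{q-2}\partial_xu$. The integrand then equals $(q-1)|u|^{q-2}\langle\partial_xu,a(t)\partial_xu\rangle\ge0$ because $a(t)$ is positive definite. Hence $\frac{d}{dt}E(u(t))\le0$. In the complex case, write $\partial_x(|u|^{q-2}\bar u)$ in terms of $\partial_xu$ and $\overline{\partial_xu}$. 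Decomposing $\partial_x u$ along $\bar u$ and orthogonal to it, the real part reduces to $|u|^{q-2}\big[\langle\mathrm{Re}\,w,a\,\mathrm{Re}\,w\rangle(q-1)+\langle\mathrm{Im}\,w,a\,\mathrm{Im}\,w\rangle\big]\ge0$ with $w=\bar u\partial_xu/|u|$. This is the standard computation.

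The main obstacle is rigor in these manipulations. First, $|u|^{q-2}\bar u$ is only $W^{1,1}_{loc}$-type when $1<q<2$, because of the singularity at $u=0$. Second, boundary terms must vanish at infinity. I would handle both by regularization. Replace $|u|^{q-2}$ by $(|u|^2+\eta)^{(q-2)/2}$, carry out the computation for $u(t)$ approximated in $W^{q,2}$ by Schwartz functions, where integration by parts has no boundary terms, obtain the sign for the regularized quantity, and pass to the limit $\eta\to0$ by dominated and monotone convergence.

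Finally, monotonicity of $t\mapsto E(u(t))$ on $\mathbb{R}_+$ follows from the non-positive derivative. Together with $u\in C([0,\infty),L^q)$, this gives $E(u(t))\le\lim_{s\to0+}E(u(s))=\|u_0\|_q^q$. Taking $q$-th roots yields \eqref{energy12}.
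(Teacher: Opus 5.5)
Your proposal is correct and follows the same route as the paper's proof: differentiate $E(u(t))$, substitute the equation, integrate by parts once in $x$, use positive definiteness of $a(t)$ to get $\frac{d}{dt}E(u(t))\le 0$, and conclude at $t=0$ using continuity of $u$ in $L^q$. Your version is more careful than the paper's, which tacitly treats $u$ as real-valued and does not justify the integration by parts near $\{u=0\}$ when $1<q<2$; your Schwartz approximation together with the regularization $(|u|^2+\eta)^{(q-2)/2}$ settles both points (for $q>2$, either omit $\eta$ or let $\eta\to0$ before approximating, since $(|u|^2+\eta)^{(q-2)/2}\bar u$ need not lie in $L^{q'}$).
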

\begin{proof}
Let $1<q<\infty$. Since $u \in C^1(\mathbb{R}_+, L^q(\mathbb{R}^n))$, the mapping $t\mapsto E(u(t))$ is differentiable and
$$
\frac{d}{dt}E(u(t))=q\int\limits_{\mathbb{R}^n}|u(t,x)|^{q-2}u(t,x)\,\partial_t u(t,x)\,dx,~~\forall t\in \mathbb{R}_{+}.
$$ 
Now, using the homogeneous heat equation \eqref{homheat}, we obtain
\begin{align}\label{energyestimate1}
	&q\int\limits_{\mathbb{R}^n}|u(t,x)|^{q-2}u(t,x)\partial_{t}u(t,x)dx-q\sum_{i,j=1}^n a_{i,j}(t)\int\limits_{\mathbb{R}^n}|u(t,x)|^{q-2}u(t,x)\frac{\partial^{2}}{\partial x_{i}\partial x_{j}}u(t,x) dx =0,\quad\forall t\in\mathbb{R}_+.
\end{align}
We see that the first term in \eqref{energyestimate1}, is exactly $\frac{d}{dt}E(u(t))$. To address the second term on the left hand side of \eqref{energyestimate1}, using integration by parts, we have
\begin{equation}
	\int\limits_{\mathbb{R}^n}|u(t,x)|^{q-2}u(t,x)\frac{\partial^{2}}{\partial x_{i}\partial x_{j}}u(t,x)dx=-\int\limits_{\mathbb{R}^n}\frac{\partial}{\partial x_{i}}u(t,x)\frac{\partial}{\partial x_{j}}\left(|u(t,x)|^{q-2}u(t,x)\right)dx,\quad\forall t\in\mathbb{R}_+.
\end{equation}
Since 
\begin{equation*}
	\frac{\partial}{\partial x_{j}}\left(|u(t,x)|^{q-2}u(t,x)\right)=(q-1)|u(t,x)|^{q-2}\frac{\partial}{\partial x_{j}}u(t,x),~~\forall x\in\mathbb{R}^{n},~~\forall t\in \mathbb{R}_{+},
\end{equation*}
we have
\begin{align}
	&\sum_{i,j=1}^na_{i,j}(t)\int\limits_{\mathbb{R}^n}|u(t,x)|^{q-2}u(t,x)\frac{\partial^{2}}{\partial x_{i}\partial x_{j}}u(t,x)dx\notag\\
	&=-(q-1)\sum_{i,j=1}^n a_{i,j}(t)\int\limits_{\mathbb{R}^n}\frac{\partial}{\partial{x_i}}u(t,x)|u(t,x)|^{q-2}\frac{\partial}{\partial{x_j}}u(t,x)dx,\quad\forall t\in\mathbb{R}_+.
\end{align}
It is worth noting that in the integration by parts calculation, we used the theorem in Exercise 1.1.16 of \cite{Ruzhansky} and the fact that $u\in \mathcal{D}om(\mathcal{L}_t)$.
Therefore, since the matrix $a(t)$ is positive definite for every $t\in\mathbb{R}_+$,
\begin{align}\label{nonincreasingenergy1}
	\frac{d}{dt} E(u(t))=-q(q-1)\sum_{i,j=1}^n a_{i,j}(t)\int\limits_{\mathbb{R}^n}\frac{\partial}{\partial{x_i}}u(t,x)|u(t,x)|^{q-2}\frac{\partial}{\partial{x_j}}u(t,x)dx\leq0,\quad\forall t\in\mathbb{R}_+.
\end{align}
Thus, the energy function is non-increasing.
Integrating both sides of \eqref{nonincreasingenergy1} with respect to $t$, we have
\begin{equation}
	E(u(t))-E(u_{0})\leq 0, ~\forall t\in\mathbb{R}_+.
\end{equation}
Hence,
\begin{align}\label{nonincreasingenergy}
	\|u(t,\cdot)\|_q\leq \|u_0\|_q,\quad\forall t\in\mathbb{R}_+,~\forall~1<q<\infty.
\end{align}
This completes the proof.
\end{proof}

To have $L^p$-$L^q$ estimates for the operator family $\{\operatorname{W}_{s,t}\}_{(s,t)\in\Delta}$, we need the following remark.

\begin{remark}\label{normpkernel}
For every $(s,t)\in\Delta$, we have the identity
$$
\|\mathcal{W}(\cdot;s,t)\|_p=\frac1{p^{\frac{n}{2p}}\left((4\pi)^n\det[A(t)-A(s)]\right)^{\frac{p-1}{2p}}},\quad\forall p\in[1,+\infty],
$$
where $p=+\infty$ is understood as the limit,
$$
\|\mathcal{W}(\cdot;s,t)\|_\infty=\frac1{\sqrt{(4\pi)^n\det[A(t)-A(s)]}}.
$$
This can be easily obtained in view of
$$\|\mathcal{W}(\cdot;s,t)\|_p^p=\int\limits_{\mathbb{R}^n} \dfrac{1}{\sqrt{(4\pi)^{np}(\det[A(t)-A(s)])^p}}e^{-\frac{p}{4}\langle x,[A(t)-A(s)]^{-1}x\rangle}dx,
$$
using the calculation in the proof of part $1$ of Proposition \ref{pr1}. 
\end{remark}
This leads to the following estimates.

\begin{proposition}\label{pr4}
Let $1\le p,q,r<+\infty$ be such that
$$
\frac1p+\frac1q=\frac1r+1.
$$
\begin{itemize}
\item[1.] For $\forall(s,t)\in\overline{\Delta}$, the operator $\operatorname{W}_{s,t}$ with domain $\mathcal{S}(\mathbb{R}^n)$ extends to a bounded operator $$\operatorname{W}_{s,t}:L^q(\mathbb{R}^n)\to L^r(\mathbb{R}^n),$$ with operator norm
\begin{equation}\label{estimationws,t}
	\|\operatorname{W}_{s,t}\|_{q\to r}\le\frac1{p^{\frac{n}{2p}}\left((4\pi)^n\det[A(t)-A(s)]\right)^{\frac{p-1}{2p}}}.
\end{equation}
Here, $\operatorname{W}_{t,t}=\mathrm{1}$ is understood for $\forall t\in[0,+\infty)$;

\item[2.] For $\forall v\in L^q(\mathbb{R}^n)$, the map $(s,t)\mapsto\operatorname{W}_{s,t}v$ belongs to
$$
C^\infty(\Delta,L^r(\mathbb{R}^n))\,\cap\, C(\overline{\Delta},L^r(\mathbb{R}^n));
$$

\item[3.] For $\forall(s,t)\in\Delta$,
$$
\operatorname{W}_{s,t}L^q(\mathbb{R}^n)\subset W^{q,\infty}(\mathbb{R}^n);
$$

\item[4.] For $\forall v\in L^q(\mathbb{R}^n)$,
$$
\partial_t\operatorname{W}_{s,t}v-\mathcal{L}_t\operatorname{W}_{s,t}v=0.
$$
\end{itemize}
\end{proposition}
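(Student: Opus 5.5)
Each part reduces to Young's convolution inequality combined with the explicit kernel norm in Remark~\ref{normpkernel}.

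\textbf{Part 1.} For $(s,t)\in\Delta$ and $v\in\mathcal{S}(\mathbb{R}^n)$, apply Young's inequality with exponents satisfying $\frac1p+\frac1q=\frac1r+1$:
\[
\|\operatorname{W}_{s,t}v\|_r=\|\mathcal{W}(\cdot;s,t)\ast v\|_r\le\|\mathcal{W}(\cdot;s,t)\|_p\|v\|_q .
\]
Inserting the formula for $\|\mathcal{W}(\cdot;s,t)\|_p$ from Remark~\ref{normpkernel} gives \eqref{estimationws,t}. Since $\mathcal{S}(\mathbb{R}^n)$ is dense in $L^q$ for $q<\infty$, the operator extends by continuity, and the extension is again convolution with $\mathcal{W}(\cdot;s,t)$. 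On the diagonal $s=t$ we set $\operatorname{W}_{t,t}=1$. There the estimate reads $\|v\|_q\le\|v\|_q$ when $p=1$, $q=r$, and the right-hand side is $+\infty$ when $p>1$, so the bound holds trivially.

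\textbf{Part 2.} For smoothness on $\Delta$, write
\[
\partial_t^\alpha\partial_s^\beta(\mathcal{W}\ast v)=(\partial_t^\alpha\partial_s^\beta\mathcal{W})\ast v .
\]
By Proposition~\ref{pr1}, each derivative $\partial_t^\alpha\partial_s^\beta\mathcal{W}$ is the Gaussian multiplied by a polynomial in $x$ whose coefficients are smooth functions of $(s,t)$. It is therefore locally uniformly bounded in $L^p$ on compacts of $\Delta$, and continuous in $L^p$ by dominated convergence. Young's inequality then turns difference quotients of the kernel in $L^p$ into difference quotients of $\operatorname{W}_{s,t}v$ in $L^r$, giving $C^\infty(\Delta,L^r)$.

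Continuity up to the diagonal is the main obstacle, and it only makes sense when $q=r$, i.e.\ $p=1$. For $q\neq r$ the values $\operatorname{W}_{s,t}v$ need not stay in $L^r$ as $(s,t)$ approaches the diagonal. In that case I would read the claim as continuity on $\overline{\Delta}$ with values in $L^q$, or restrict the diagonal statement to $p=1$. With $p=1$, Lemma~\ref{avoidingstot} part~3 (or simply $\|\mathcal{W}\|_1=1$) gives the uniform bound $\|\operatorname{W}_{s,t}\|_{q\to q}\le1$. Approximate $v$ by $\phi\in\mathcal{S}(\mathbb{R}^n)$ in $L^q$; for $\phi$ itself, $\operatorname{W}_{s-\delta,s+\epsilon}\phi\to\phi$ in $\mathcal{S}(\mathbb{R}^n)$, hence in $L^q$, by Proposition~\ref{pr2} part~5 (adapted to allow $s=0$ via Proposition~\ref{pr1} part~3). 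A standard $\varepsilon/3$ argument then gives continuity at the diagonal.

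\textbf{Part 3.} For $(s,t)\in\Delta$ and any multi-index $\beta$,
\[
\partial_x^\beta\operatorname{W}_{s,t}v=(\partial_x^\beta\mathcal{W}(\cdot;s,t))\ast v,
\]
and $\partial_x^\beta\mathcal{W}(\cdot;s,t)\in\mathcal{S}(\mathbb{R}^n)\subset L^1$. Young's inequality with exponents $(1,q,q)$ puts every derivative in $L^q$, so $\operatorname{W}_{s,t}v\in W^{q,\infty}(\mathbb{R}^n)$.

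\textbf{Part 4.} By Part~2 the $t$-derivative may be taken inside the convolution, and by Part~3 the $x$-derivatives may be taken onto the kernel. Hence
\[
(\partial_t-\mathcal{L}_t)\operatorname{W}_{s,t}v=\big((\partial_t-\mathcal{L}_t)\mathcal{W}(\cdot;s,t)\big)\ast v=0
\]
by Proposition~\ref{pr1} part~2, exactly as in Proposition~\ref{pr2} part~4.
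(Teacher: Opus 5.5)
Your proposal is correct and follows essentially the same route as the paper: Young's inequality with the explicit kernel norm from Remark~\ref{normpkernel} for Part~1; derivative kernels plus an $\varepsilon/3$ density argument based on Proposition~\ref{pr2}, part~5, restricted to $q=r$ (i.e.\ $p=1$), for Part~2; Young with exponent $1$ applied to $\partial_x^\beta\mathcal{W}(\cdot;s,t)$ for Part~3; and moving $\partial_t$ and $\mathcal{L}_t$ onto the kernel for Part~4. Two points need tightening. On the diagonal with $p>1$ the identity is not bounded from $L^q$ to $L^r$, so your ``holds trivially'' only makes the norm inequality vacuous and does not establish boundedness; this is a defect of the statement itself, which the paper's proof also passes over. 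Separately, the diagonal limit at $s=0$ should come from rerunning the estimates of Proposition~\ref{pr2}, part~5, using continuity of $a$ at $0$. Proposition~\ref{pr1}, part~3, does not suffice there, since it only gives convergence in $\mathcal{S}'(\mathbb{R}^n)$.
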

\begin{proof} 
To prove part $1$, we have
\begin{eqnarray*}
	\operatorname{W}_{s,t}&: \mathcal{S}(\mathbb{R}^n)\subset L^q(\mathbb{R}^n) \longrightarrow \mathcal{S}(\mathbb{R}^n)\subset L^{r}(\mathbb{R}^n),\\
	&v \longmapsto \operatorname{W}_{s,t}v, \quad \forall ~1 \leq q, r < \infty.
\end{eqnarray*}
Let $v\in\mathcal{S}(\mathbb{R}^n)\subset L^{q}(\mathbb{R}^n)$ for every $1 \leq q < \infty$. In addition, we have
$\mathcal{W}(\cdot;s,t)\in \mathcal{S}(\mathbb{R}^n) \subset L^{p}(\mathbb{R}^n)$, for every $1\leq p < \infty$. Therefore, by applying Young's inequality for convolutions, we obtain
\begin{align}\label{Young1}
	\|\operatorname{W}_{s,t}v\|_{r} = \|\mathcal{W}(\cdot;s,t) \ast v \|_{r} \leq \|\mathcal{W}(\cdot;s,t)\|_{p} \|v\|_{q}<\infty,~ \text{where}, 
	~ 1\leq p,q,r < \infty, \text{and} ~ \frac{1}{p}+\frac{1}{q}=1+\frac{1}{r}.
\end{align}
Therefore using \eqref{Young1}, and Remark \ref{normpkernel} we have
\begin{align}
	\left\|\operatorname{W}_{s,t}\right\|_{q\rightarrow r}&=
	\sup_{\| v \|_{q}=1} \left\|\operatorname{W}_{s,t}v\right\|_{r}\leq\sup_{\| v \|_{q}=1}\left\|\mathcal{W}(\cdot;s,t)\right\|_{p} \|v\|_{q}\notag\\
	&=\left\|\mathcal{W}(\cdot;s,t)\right\|_{p} = \left((4\pi)^n \det[A(t)-A(s)]\right)^{\frac{1-p}{2p}} p^{-\frac{n}{2p}},~~1\leq p<\infty.\notag
\end{align}
In addition, if $p=\infty$, then
\begin{align*}
	\left\|\operatorname{W}_{s,t}\right\|_{q\rightarrow r} \leq \dfrac{1}{\sqrt{(4\pi)^n \det[A(t)-A(s)]}}, \quad 1\leq q,r \leq \infty.
\end{align*}
Thus, the proof of part $1$ is complete. 

To begin proving part~2, we first show that for every $v \in L^q(\mathbb{R}^n)$, the mapping
\[
(s,t)\mapsto \operatorname{W}_{s,t}v,
\]
belongs to $C^{\infty}(\Delta, L^r(\mathbb{R}^n))$.
Since $\mathcal{W}\in C^{\infty}(\Delta, \mathcal{S}(\mathbb{R}^n))$, it follows that for every pair of multi-indices
$\gamma,\eta \in \mathbb{N}_0$ (with respect to $s$ and $t$, respectively),
\[
\partial_s^\gamma \partial_t^\eta \mathcal{W}(\cdot; s,t)\in \mathcal{S}(\mathbb{R}^n)\subset L^1(\mathbb{R}^n),\quad \forall (s,t)\in \Delta.
\]

Hence, the convolution $(\partial_s^\gamma \partial_t^\eta \mathcal{W}(\cdot; s,t)) \ast v$ where $(s,t)\in \Delta$ is well-defined in $L^r(\mathbb{R}^n)$ and depends continuously on $(s,t)$, since convolution
\[
\mathcal{S}(\mathbb{R}^n)\times L^q(\mathbb{R}^n)\to L^r(\mathbb{R}^n),
\]
is a continuous bilinear map.
Moreover, differentiation commutes with convolution, so that
\[
\partial_s^\gamma \partial_t^\eta \operatorname{W}_{s,t}v
= (\partial_s^\gamma \partial_t^\eta \mathcal{W}(\cdot; s,t)) \ast v,\quad \forall (s,t)\in \Delta.
\]
Therefore, $(s,t)\mapsto \operatorname{W}_{s,t}v$ belongs to $C^\infty(\Delta,L^r(\mathbb{R}^n))$.

For continuity up to the boundary $\overline{\Delta}$, we restrict to the case $q = r$, which implies $p = 1$.	We need to show that $\lim_{t \rightarrow s} \|\operatorname{W}_{s,t}v - v\|_{r} = 0$, meaning $\operatorname{W}_{s,t}$ strongly converges to the identity operator $\mathrm{1}$ as $t \to s$. We assume $r < +\infty$.
Let $\epsilon > 0$. Since $\mathcal{S}(\mathbb{R}^n)$ is dense in $L^r(\mathbb{R}^n)$ for $r < \infty$, we can find a Schwartz function $\phi \in \mathcal{S}(\mathbb{R}^n)$ such that:
$$
	\|v - \phi\|_{r} < \frac{\epsilon}{3}.
$$
By part 5 of Proposition \ref{pr2}, as $t \rightarrow s$, $\operatorname{W}_{s,t}\phi \to \phi$ in $\mathcal{S}(\mathbb{R}^n)$, and by Lemma 2.9 of \cite{wong} it follows that the convergence holds in $L^r(\mathbb{R}^n)$. Thus, there exists a $\delta > 0$ such that for all $t - s < \delta$:
$$
\|\operatorname{W}_{s,t}\phi - \phi\|_{r} < \frac{\epsilon}{3},\quad \forall (s,t)\in \Delta.
$$
Using Young's inequality with $p=1$, we have 
$$
\|\operatorname{W}_{s,t}(v-\phi)\|_{r} \leq \|\mathcal{W}(\cdot; s,t)\|_1 \|v-\phi\|_{r} = \|v-\phi\|_{r},\quad \forall (s,t)\in \Delta.
$$
Therefore, one can obtain:
\begin{align*}
	\|\operatorname{W}_{s,t}v - v\|_{r} &= \|\operatorname{W}_{s,t}(v-\phi) + (\operatorname{W}_{s,t}\phi - \phi) + (\phi - v)\|_{r} \\
	&\le \|\operatorname{W}_{s,t}(v-\phi)\|_{r} + \|\operatorname{W}_{s,t}\phi - \phi\|_{r} + \|\phi - v\|_{r} \\
	&\le \|v - \phi\|_{r} + \|\operatorname{W}_{s,t}\phi - \phi\|_{r} + \|\phi - v\|_{r} \\
	&< \frac{\epsilon}{3} + \frac{\epsilon}{3} + \frac{\epsilon}{3} = \epsilon.
\end{align*}
This establishes strong continuity at the boundary $s=t$, proving that $(s,t) \mapsto \operatorname{W}_{s,t}v \in C(\overline{\Delta}, L^r(\mathbb{R}^n))$ for $v \in L^r(\mathbb{R}^n)$. Thus, the proof of part 2 is complete. 

To prove part 3, fix $(s,t) \in \Delta$ and let $v \in L^q(\mathbb{R}^n)$. By definition, $\operatorname{W}_{s,t}v = \mathcal{W}(\cdot; s,t) \ast v$. 
Since $\mathcal{W}(\cdot; s,t) \in \mathcal{S}(\mathbb{R}^n)$, it follows that for any multi-index $\alpha \in \mathbb{N}_0^n$, the partial derivative $\partial_x^\alpha \mathcal{W}(\cdot; s,t)$ exists and is also in $\mathcal{S}(\mathbb{R}^n)$. In particular, $\partial_x^\alpha \mathcal{W}(\cdot; s,t) \in L^1(\mathbb{R}^n)$. Using Young's convolution inequality with $p=1$, we obtain
\begin{equation*}
	\|\partial_x^\alpha (\operatorname{W}_{s,t}v)\|_{q} = \|(\partial_x^\alpha \mathcal{W}(\cdot; s,t)) \ast v\|_{q} \le \|\partial_x^\alpha \mathcal{W}(\cdot; s,t)\|_{1} \|v\|_{q}<\infty, \quad\forall \alpha\in\mathbb{N}_0^n, ~\forall (s,t)\in\Delta.
\end{equation*}
This shows that $\operatorname{W}_{s,t}v \in W^{q, \infty}(\mathbb{R}^n)$, and hence
$$
\operatorname{W}_{s,t}L^q(\mathbb{R}^n) \subset W^{q, \infty}(\mathbb{R}^n),\quad \forall(s,t)\in\Delta.
$$
Thus, we complete the proof of part 3.

To prove part $4$, let $v \in L^q(\mathbb{R}^n)$ and fix $(s,t) \in \Delta$. By part $1$ of Proposition~\ref{pr4}, the operator $\operatorname{W}_{s,t}$ extends to a bounded linear operator on $L^q(\mathbb{R}^n)$, and by part $2$ of Proposition~\ref{pr4}, the map
\[
(s,t) \mapsto \operatorname{W}_{s,t}v,
\]
belongs to $C^\infty(\Delta, L^r(\mathbb{R}^n))$, where $\frac{1}{r}=\frac{1}{p}+\frac{1}{q}-1$.
In particular, $\partial_t \operatorname{W}_{s,t}v \in L^r(\mathbb{R}^n)$ exists.

We now identify this derivative. By part $3$ of Proposition~\ref{pr4} applied with $\gamma=(0,1)$,
\[
\partial_t \operatorname{W}_{s,t}v
=
(\partial_t \mathcal{W}(\cdot;s,t)) \ast v
\quad \text{in } L^r(\mathbb{R}^n),\quad \forall (s,t)\in \Delta.
\]

Similarly, applying spatial derivatives as in Remark \ref{partialtimesx} and using the definition of $\mathcal{L}_t$, we obtain
\[
\mathcal{L}_t \operatorname{W}_{s,t}v
=
(\mathcal{L}_t \mathcal{W}(\cdot;s,t)) \ast v
\quad \text{in } L^r(\mathbb{R}^n),\quad \forall (s,t)\in \Delta.
\]

By part $2$ of Proposition~\ref{pr1}, the kernel satisfies
\[
\partial_t \mathcal{W}(x;s,t) - \mathcal{L}_t \mathcal{W}(x;s,t) = 0,
\quad \forall x\in\mathbb{R}^n,\ \forall (s,t)\in\Delta.
\]

Since $\partial_t \mathcal{W}(\cdot;s,t),\, \mathcal{L}_t \mathcal{W}(\cdot;s,t)\in L^p(\mathbb{R}^n)$, we may convolve this identity with $v\in L^q(\mathbb{R}^n)$ to obtain
\[
\partial_t \operatorname{W}_{s,t}v
-
\mathcal{L}_t \operatorname{W}_{s,t}v
=
\bigl(\partial_t \mathcal{W}(\cdot;s,t)
-
\mathcal{L}_t \mathcal{W}(\cdot;s,t)\bigr) \ast v=0,\quad \forall (s,t)\in \Delta,
\]
in $L^r(\mathbb{R}^n)$. Thus, the proof of the proposition is complete.
\end{proof}

Now, we are ready to state the well-posedness of the $L^q$-Cauchy problem of our heat equation.

\begin{proposition}\label{pr5}
Let $u_0\in L^q(\mathbb{R}^n)$ and $f\in C^{\infty}(\mathbb{R}_+,W^{q, \infty}(\mathbb{R}^n))\,\cap\, C([0,\infty),W^{q,\infty}(\mathbb{R}^{n}))$, for some $q\in[1,+\infty)$. Then the function $u:\mathbb{R}_+\times\mathbb{R}^n\to\mathbb{C}$ defined by
\begin{equation}\label{solution1}
	u(t)\doteq\operatorname{W}_{0,t}u_0+\int\limits_0^t\operatorname{W}_{s,t}f(s)ds,\quad\forall t\in\mathbb{R}_+,
\end{equation}
 is a  solution of the Cauchy problem
\begin{align}\label{cauchypr50}
\begin{cases}
	\partial_t u(t)-\mathcal{L}_tu(t)=f(t),\quad\forall t\in\mathbb{R}_+,\\
	\lim\limits_{t\to0+}u(t)=u_0,
\end{cases}
\end{align}
that satisfies
$$
u\in C^\infty(\mathbb{R}_+,W^{q, \infty}(\mathbb{R}^n))\,\cap\,C([0,+\infty),L^q(\mathbb{R}^n)),
$$
and 
\begin{equation}\label{normestimatesimple}
	\|u(t)\|_q\le\|u_0\|_q+\int\limits_0^t\|f(s)\|_qds,\quad\forall t\in[0,+\infty).
\end{equation} 
Moreover, \eqref{solution1} is the unique solution of the  Cauchy problem \eqref{cauchypr50} for $q\in(1,\infty)$.
Furthermore, if $u_0\ge0$ and $f\ge0$ then $u\ge0$.
\end{proposition}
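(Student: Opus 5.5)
We mirror the proof of Proposition~\ref{pr3}, with the Schwartz seminorms replaced by the $W^{q,\infty}$ seminorms $\|\cdot\|_{q,j,\ell,K}$ of Remark~\ref{notation2}, and the Schwartz-class results replaced by their $L^q$ counterparts in Proposition~\ref{pr4}.

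\textbf{Regularity of $u$.} For the homogeneous part, Proposition~\ref{pr4} (parts 2 and 3, with $p=1$, $r=q$) gives that $t\mapsto \operatorname{W}_{0,t}u_0$ belongs to $C^\infty(\mathbb{R}_+,L^q)\cap C([0,\infty),L^q)$ with values in $W^{q,\infty}$. The $x$-derivatives $\partial_x^\ell\partial_t^j\operatorname{W}_{0,t}u_0=(\partial_x^\ell\partial_t^j\mathcal{W}(\cdot;0,t))\ast u_0$ are controlled by Young's inequality, since $\|\partial_x^\ell\partial_t^j\mathcal{W}(\cdot;0,t)\|_1$ is locally bounded in $t>0$. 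For the Duhamel term $\int_0^t\operatorname{W}_{s,t}f(s)\,ds$ we do not put $x$-derivatives on the kernel, because $\|\partial_x^\ell \mathcal{W}(\cdot;s,t)\|_1$ blows up as $s\to t$. Instead we write $\partial_x^\ell\operatorname{W}_{s,t}f(s)=\operatorname{W}_{s,t}\partial_x^\ell f(s)$ and use part 3 of Lemma~\ref{avoidingstot}. This gives
\[
\Big\|\partial_x^\ell\int_0^t\operatorname{W}_{s,t}f(s)\,ds\Big\|_q\le\int_0^t\|\partial_x^\ell f(s)\|_q\,ds,
\]
which is finite and continuous in $t\in[0,\infty)$ because $f\in C([0,\infty),W^{q,\infty})$.

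\textbf{Time derivatives of the Duhamel term.} This is the main obstacle. Naively applying the Leibniz formula of Remark~\ref{Leibniz} produces $\int_0^t\partial_t^\gamma\mathcal{W}(\cdot;s,t)\ast f(s)\,ds$, whose kernel is not integrable near $s=t$ in $L^1$. I would therefore first trade $t$-derivatives for $x$-derivatives using Proposition~\ref{pr1}(2): $\partial_t\mathcal{W}=\mathcal{L}_t\mathcal{W}$, and $\mathcal{L}_t$ commutes with convolution. This gives
\[
\partial_t\int_0^t\operatorname{W}_{s,t}f(s)\,ds=f(t)+\mathcal{L}_t\int_0^t\operatorname{W}_{s,t}f(s)\,ds=f(t)+\int_0^t\operatorname{W}_{s,t}\mathcal{L}_tf(s)\,ds .
\]
The justification is to difference-quotient in $L^q$ and use the uniform bound $\|\operatorname{W}_{s,t}\|_{q\to q}\le1$ together with the strong continuity of Proposition~\ref{pr4}(2). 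Iterating, each $\partial_t^\gamma$ becomes a finite sum of terms $\partial_t^{j}(a$-coefficients$)\,\partial_x^{\ell}\partial_t^{i}f(t)$ plus integrals $\int_0^t\operatorname{W}_{s,t}P(t,\partial_x)f(s)\,ds$ with polynomial differential operators $P$ that are smooth in $t$. All of these are bounded in $W^{q,\infty}$ by the previous step. This yields $u\in C^\infty(\mathbb{R}_+,W^{q,\infty})$. The displayed identity is exactly the equation $\partial_tu-\mathcal{L}_tu=f$, once we add $\partial_t\operatorname{W}_{0,t}u_0=\mathcal{L}_t\operatorname{W}_{0,t}u_0$ from Proposition~\ref{pr4}(4).

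\textbf{Remaining claims.} The initial condition follows from Proposition~\ref{pr4}(2), giving $\operatorname{W}_{0,t}u_0\to u_0$ in $L^q$, together with $\|\int_0^t\operatorname{W}_{s,t}f(s)\,ds\|_q\le t\sup_{[0,t]}\|f\|_q\to0$. The same two bounds, via Minkowski's integral inequality and $\|\operatorname{W}_{s,t}\|_{q\to q}\le1$, give \eqref{normestimatesimple}. Positivity follows exactly as in Proposition~\ref{pr3}, since $\mathcal{W}>0$. For uniqueness when $1<q<\infty$, let $w$ be the difference of two solutions in the stated class. Then $w$ solves the homogeneous problem with $w(0)=0$ and lies in $C^1(\mathbb{R}_+,L^q)\cap C([0,\infty),L^q)\cap C(\mathbb{R}_+,W^{q,2})$, so Lemma~\ref{energyestim} gives $\|w(t)\|_q\le0$, hence $w\equiv0$.
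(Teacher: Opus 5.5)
Your proof is correct. Its skeleton is the paper's: the Duhamel formula, Young's inequality with $\|\mathcal{W}(\cdot;s,t)\|_1=1$, spatial derivatives moved onto $f(s)$, Proposition~\ref{pr4}(2) for the initial trace, the triangle/Minkowski inequality for the norm bound, Lemma~\ref{energyestim} for uniqueness, and $\mathcal{W}>0$ for positivity.

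The genuine difference is how the equation and $C^\infty$ time regularity of the Duhamel term are obtained. The paper says the time-differentiability of Proposition~\ref{pr3} ``remains valid'' in $L^q$ and $W^{q,\infty}$ because $\mathcal{S}(\mathbb{R}^n)$ embeds continuously into them. It reads off the equation from the Schwartz-class computation and then bootstraps. Since $u(t)\in W^{q,\infty}$ and $\mathcal{L}_t(W^{q,\infty})\subset W^{q,\infty}$, one gets $\partial_t u=\mathcal{L}_t u+f\in W^{q,\infty}$. Differentiating the equation repeatedly, using $\partial_t\mathcal{L}_t u=\dot{\mathcal{L}}_t u+\mathcal{L}_t\partial_t u$, then gives $\partial_t^{k+1}u(t)\in W^{q,\infty}$.

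You instead differentiate $\int_0^t\operatorname{W}_{s,t}f(s)\,ds$ directly by difference quotients. You use $\partial_t\mathcal{W}=\mathcal{L}_t\mathcal{W}$ to put $\mathcal{L}_t$ on $f(s)$, which gives explicit formulas for every $\partial_t^\gamma$ with bounds uniform up to $s=t$.

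Your route is more rigorous exactly where the paper is weakest:
\begin{itemize}
\item The embedding argument does not literally apply, since $u_0$ and $f$ are not Schwartz.
\item The formula of Remark~\ref{Leibniz} used in Proposition~\ref{pr3} involves $\int_0^t\partial_t\mathcal{W}(\cdot;s,t)\ast f(s)\,ds$, whose kernel has $L^1$-norm of order $(t-s)^{-1}$. You flagged this correctly.
\item The bootstrap needs $t\mapsto\mathcal{L}_t u(t)$ to be differentiable in $L^q$, which is never established beforehand.
\end{itemize}
Your device is in fact the one the paper itself adopts later, in the $W^{q,\infty}$ stability proposition, via $(\partial_t+\mathcal{L}_t)^j$. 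The paper's bootstrap is shorter. It becomes a valid induction once your identity supplies $u\in C^1(\mathbb{R}_+,W^{q,k})$ for every $k$.
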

\begin{proof}
First, we verify that the function $u$ in \eqref{solution1} satisfies the Cauchy problem \eqref{cauchypr50}. We first consider derivatives with respect to time.
	Since $\mathcal{S}(\mathbb{R}^n)$ is continuously embedded into $L^q(\mathbb{R}^n)$ and $W^{q,\infty}(\mathbb{R}^n)$, the time-differentiability established in Proposition \ref{pr3} for $\mathcal{S}(\mathbb{R}^n)$ remains valid in these spaces as well. Hence, all time derivatives appearing in \eqref{cauchypr50} are well-defined in $L^q(\mathbb{R}^n)$ and $W^{q,\infty}(\mathbb{R}^n)$.
	
	Next, we treat spatial derivatives. Fix $\beta \in \mathbb{N}_0^n$.
	Since
	$
	f \in C([0,\infty), W^{q,\infty}(\mathbb{R}^n)),
	$
	it follows that the map
	$
	s \mapsto \partial_x^\beta f(s)
	$
	is continuous from $[0,t]$ into $L^q(\mathbb{R}^n)$. In particular,
	$$
	\sup_{s\in[0,t]} \|\partial_{x}^\beta f(s)\|_{q} < \infty.
	$$
	By Lemma 9.1 from \cite{Brezis}, we have 
	$$
	\partial_x^\beta (f(s)\ast\mathcal{W}(\cdot;s,t))= (\partial_x^\beta f(s)\ast\mathcal{W}(\cdot;s,t)),
	$$
	and by Young's inequality,
	$$
	\|\partial_x^\beta (f(s)\ast\mathcal{W}(\cdot;s,t))\|_{q}
	\leq \|\partial_x^\beta f(s)\|_{q} \|\mathcal{W}(\cdot;s,t)\|_{1}	\leq \|\partial_x^\beta f(s)\|_{q}
	\leq C_\beta,
	$$
	uniformly for $s \in [0,t]$.
	Therefore, the map
	$$
	s \mapsto \partial_x^\beta \operatorname{W}_{s,t} f(s),\quad\forall(s,t)\in\Delta,
	$$
	belongs to $C([0,t], L^q(\mathbb{R}^n))$, and in particular is integrable. Hence,
	$$
	\int_0^t \partial_x^\beta \operatorname{W}_{s,t} f(s) ds,\quad\forall(s,t)\in\Delta,
	$$
	is well-defined in $L^q(\mathbb{R}^n)$.
	Therefore, we may pass the derivative under the integral sign to obtain
	$$
	\partial_x^\beta \int_0^t \operatorname{W}_{s,t} f(s) ds
	= \int_0^t \partial_x^\beta \operatorname{W}_{s,t} f(s) ds,\quad\forall(s,t)\in\Delta.
	$$
Applying Young's inequality, we then obtain
\begin{align}\label{maximalregularity1}
	&\left\| \int\limits_0^t \partial_x^\beta\left( \operatorname{W}_{s,t} f(s) \right)ds \right\|_q =\left\| \int\limits_0^t \partial_x^\beta\left( \mathcal{W}(\cdot; s,t)\ast f(s) \right) ds \right\|_q
	=\left\| \int\limits_0^t  \mathcal{W}(\cdot; s,t)\ast \partial_x^\beta f(s)  ds \right\|_q\notag\\
	&\leq \int\limits_0^t \left\|\mathcal{W}(\cdot; s,t)\ast \partial_x^\beta f(s) \right\|_q  ds\leq \int\limits_0^t \| \partial_x^\beta f(s) \|_q  \left\| \mathcal{W}(\cdot; s,t) \right\|_1  ds= \sup_{s \in [0,t]} \| \partial_x^\beta f(s) \|_q  t< \infty.
	\end{align}	
	Taking into account the above arguments, and combining them with the time-regularity result established in Proposition~\ref{pr3}, we conclude that all the terms appearing in the representation of $u$ are sufficiently regular in $L^q(\mathbb{R}^n)$.
	
	Consequently, the computations in Proposition~\ref{pr3} remain valid when interpreted in $L^q(\mathbb{R}^n)$, and we deduce that $u$ satisfies the heat equation in \eqref{cauchypr50}. Moreover, we verify the initial condition. By definition, we have
	\[
	u(0,\cdot)=u_0.
	\]
	Furthermore, by part~2 of Proposition~\ref{pr4}, it holds that
	\begin{equation}\label{contntyatzero}
		\lim_{t \to 0^{+}} \left\| \operatorname{W}_{0,t}u_{0} - u_{0} \right\|_{q} = 0.
	\end{equation}
	Consequently,
	\[
	\lim_{t \to 0^{+}} u(t) = u_0, \quad \text{in } L^q(\mathbb{R}^n).
	\]
Now, we want to prove that 
\begin{equation}\label{sopr5}
	u \in C^{\infty}(\mathbb{R}_{+},W^{q,\infty}(\mathbb{R}^{n})),\quad \text{for} ~~1\leq q <\infty.
\end{equation}

At first, we fix $t\in\mathbb{R}_+$, and we show that $u(t)\in W^{q,\infty}(\mathbb{R}^{n})$.
By part 3 of Proposition \ref{pr4} and the assumption $u_0 \in  L^q(\mathbb{R}^n)$, we have 
$\operatorname{W}_{0,t} u_0 \in W^{q,\infty}(\mathbb{R}^n)$. Similarly, for all $(s,t) \in \Delta$,  
\[
\operatorname{W}_{s,t} f(s) \in W^{q,\infty}(\mathbb{R}^n),
\] 
because $f(s) \in W^{q,\infty}(\mathbb{R}^n) \subset L^q(\mathbb{R}^n)$.  
Next, following \eqref{maximalregularity1}, and its argument, we deduce that 
$$
\int\limits_0^t \operatorname{W}_{s,t} f(s) \, ds \in W^{q,\infty}(\mathbb{R}^n),\quad \forall t\in \mathbb{R}_{+}.
$$ 
Hence,
\[
u(t) = \operatorname{W}_{0,t} u_0 + \int\limits_0^t \operatorname{W}_{s,t} f(s) \, ds \in W^{q,\infty}(\mathbb{R}^n),\quad \forall t\in \mathbb{R}_{+}.
\]

To continue proving \eqref{sopr5}, we proceed as follows. Since 
$u(t) \in W^{q,\infty}(\mathbb{R}^{n})$ and the coefficient matrix 
\(a(t)\) is smooth for all 
\(t \in \mathbb{R}_{+}\), we have
\begin{equation}\label{preserve}
	\mathcal{L}_t\bigl(W^{q,\infty} (\mathbb{R}^{n})\bigr) \subset 
	W^{q,\infty}(\mathbb{R}^{n}), \quad \forall t \in \mathbb{R}_{+}.
\end{equation}
Therefore, $	\mathcal{L}_t u(t)\in 	W^{q,\infty}(\mathbb{R}^{n}), \quad \forall t \in \mathbb{R}_{+}$.
The reason is that $\mathcal{L}_t$ is a second order differential operator that maps the space $W^{q,k}(\mathbb{R}^{n})$ to the space $W^{q,k-2}(\mathbb{R}^{n})$ for every $q\in (1,\infty]$ and $k\in\mathbb{N}$. Therefore, by the definition of the Sobolev space, we have
\begin{align}
	\mathcal{L}_t(	W^{q,\infty}(\mathbb{R}^{n})) &= \mathcal{L}_t(\bigcap_{k=0}^{\infty}	W^{q,k}(\mathbb{R}^{n})) \subseteq \bigcap_{k=0}^{\infty} \mathcal{L}_t (W^{q,k}(\mathbb{R}^{n}))\notag\\
	&=\bigcap_{k=0}^{\infty}W^{q,k-2}(\mathbb{R}^{n}) =\bigcap_{k=0}^{\infty}  W^{q,k}(\mathbb{R}^{n})=W^{q,\infty}(\mathbb{R}^{n}) .
\end{align}

Then, considering equation \eqref{cauchypr50} and using \eqref{preserve} 
together with the assumption $f(t) \in W^{q,\infty}(\mathbb{R}^{n})$ 
for all $t \in \mathbb{R}_{+}$, it immediately follows that
$
\partial_t u(t) \in W^{q,\infty}(\mathbb{R}^n)$, for every $t \in \mathbb{R}_{+}$.

Now, for every $t\in\mathbb{R}_+$ we
differentiate both sides of the equation in \eqref{cauchypr50} with respect to time. It yields
\begin{equation}\label{2}
	\partial_t^2 u(t) - \partial_t \mathcal{L}_t u(t) = \partial_t f(t), 
	\quad \forall t \in \mathbb{R}_{+}.
\end{equation}
Using the fact that for every fixed $t\in\mathbb{R}_+$, $u(t), \partial_t u(t) \in W^{q,\infty}(\mathbb{R}^n)$ and applying the product rule together with \eqref{preserve}, we obtain

\begin{equation}\label{eq104}
	\partial_t \mathcal{L}_t u(t) = \dot{\mathcal{L}}_t u(t) + \mathcal{L}_t \partial_t u(t)\in W^{q,\infty}(\mathbb{R}^{n}),
\end{equation}
where 
\begin{align*}
	\dot{\mathcal{L}}_t u(t) \doteq \sum_{i,j=1}^n \dot{a}_{ij}(t) \, \partial_{x_i x_j}^2 u(t).
\end{align*}

Hence, by \eqref{eq104}, and the fact that
$f \in C^{\infty}(\mathbb{R}_{+}, W^{q,\infty}(\mathbb{R}^n))$, it follows from \eqref{2} that 
$\partial_t^2 u(t) \in W^{q,\infty}(\mathbb{R}^{n})$.
After differentiating \eqref{cauchypr50} $k$ times with respect to $t$, we obtain the following:

\begin{equation}\label{hypothesis}
	\partial_t^{k+1} u(t) - \partial_t^k \mathcal{L}_t u(t) = \partial_t^k f(t), \quad \forall t \in \mathbb{R}_{+},
\end{equation}
and
\begin{equation}\label{repeat}
	\partial_{t}^{m} u(t) \in W^{q,\infty}(\mathbb{R}^{n}), \quad \forall t \in \mathbb{R}_{+}, \quad \forall m = 0,1,\dots,k+1.
\end{equation}
Therefore, $u \in C^\infty(\mathbb{R}_{+}, W^{q,\infty}(\mathbb{R}^{n}))$. Now, we want to show that $u \in C([0,\infty), L^q(\mathbb{R}^n))$. To do so, considering $C^{\infty}(\mathbb{R}_+,W^{q, \infty}(\mathbb{R}^n))$, we only need to show continuity at $t=0$ which it is obtained by \eqref{contntyatzero}.
Now, we are going to prove the estimate \eqref{normestimatesimple}.
To this end, we take the $L^q$-norm of both sides of \eqref{solution1} and also consider $q=r$ and $p=1$ in \eqref{estimationws,t}. Then we have
\begin{align}
	\|u(t)\|_q &\leq \|\operatorname{W}_{0,t}u_{0}\|_{q}+\int\limits_0^{t}\|\operatorname{W}_{s,t}f(s)\|_{q}ds\notag\\
	&\leq \|u_{0}\|_{q}+\int\limits_0^{t}\|f(s)\|_{q}ds, \quad\forall t\in \mathbb{R}_{+}.
\end{align}

Here, we aim to demonstrate uniqueness for $1<q<\infty$. Suppose $\tilde{u}$ is another solution to \eqref{cauchypr50}.
 Then,
\begin{equation}\label{cauchypr52}
\begin{cases}
	\partial_t \tilde{u}(t)-\mathcal{L}_t\tilde{u}(t)=f(t),\quad\forall t\in\mathbb{R}_+,\\
	\lim\limits_{t\to0+}\tilde{u}(t)=u_0.
\end{cases}
\end{equation}
Subtracting \eqref{cauchypr50} and \eqref{cauchypr52}, we have
\begin{equation}\label{cauchypr53}
\begin{cases}
	\partial_t \left(u(t)-\tilde{u}(t)\right)-\mathcal{L}_t(u(t)-\tilde{u}(t))=0,\quad\forall t\in\mathbb{R}_+,\\
	\lim\limits_{t\to0+}\left(u(t)-\tilde{u}(t)\right)=0.
\end{cases}
\end{equation}
Applying \eqref{nonincreasingenergy} from Lemma \ref{energyestim} to \eqref{cauchypr53}, we deduce uniqueness as follows:
\begin{equation}
	\|u(t)-\tilde{u}(t)\|_q\le\|u_0-u_0\|_q=0,\quad\forall t\in[0,+\infty),~~\forall 1<q<\infty.
\end{equation}
To prove the final part concerning the positivity of the solution $u$, we employ arguments analogous to those used in Proposition \ref{pr3}.
\end{proof}

We will see bellow that under certain assumptions we have also growth estimates.
\begin{proposition}\label{pr6}
Let $1\le p,q,r<+\infty$ be such that
$
\frac1p+\frac1q=\frac1r+1.
$
In the terminology of Proposition \ref{pr5}, assume that $n(p-1)\alpha<2p$ and
$$
f\in L^\beta((0,t),L^q(\mathbb{R}^n)),~~ \text{where} ~~ \frac1\alpha+\frac1\beta=1, ~~ \text{for} ~~ 1<\alpha<\infty, \quad \forall t\in\mathbb{R}_+.
$$
Moreover, For each 
$t \in\mathbb{R}_+$, let $\lambda_{min} (a(t))$ denote the smallest eigenvalue of the matrix $a(t)$. Now, we define the function
\begin{align}
	F&: \mathbb{R}_+ \longrightarrow \mathbb{R}_+,\notag\\
	F(t)&\doteq \int\limits\limits_0^t \lambda_{\min}(a(\tau))d\tau,\quad \forall t\in \mathbb{R}_{+}.
\end{align}
Then, there exist 
$C\doteq\frac{1}{p^{\frac{n}{2p}}(4\pi)^{\frac{n(p-1)}{2p}}}>0$,
such that
\begin{equation}\label{constant}
	\|u(t)\|_r\le C(F(t))^{-\frac{n(p-1)}{2p}}\|u_0\|_q+C\|\|f(\cdot)\|_q\|_{L^\beta((0,t))}\left(\int\limits\limits_0^t\frac{1}{\left(F(t)-F(s)\right)^{\frac{(p-1)n\alpha}{2p}}}ds\right)^{\frac{1}{\alpha}}.
\end{equation}
In particular, if $\lambda_{\min}(a)$ is 
bounded below by a positive constant $\gamma_0$, then there exists a constant $C>0$ such that
$$
\|u(t)\|_r\le  C\gamma_0 ^{-\frac{n(p-1)}{2p}}\left(t^{-\frac{n(p-1)\alpha}{2p}}\|u_{0}\|_q+\dfrac{t^{\frac{1}{\alpha}-\dfrac{n(p-1)}{2p}}}{\left(1-\frac{n(p-1)\alpha}{2p} \right)^{\frac{1}{\alpha}}} \|\|f(\cdot)\|_{q}\|_{L^\beta((0,t))}\right),\quad \forall t\in\mathbb{R}_{+}.
$$
\end{proposition}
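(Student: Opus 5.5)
Start from the Duhamel representation \eqref{solution1} of Proposition \ref{pr5} and take the $L^r$ norm:
$$
\|u(t)\|_r\le \|\operatorname{W}_{0,t}u_0\|_r+\int_0^t\|\operatorname{W}_{s,t}f(s)\|_r\,ds .
$$
Each term is then bounded with the $L^q\to L^r$ estimate \eqref{estimationws,t} of Proposition \ref{pr4}. This gives $\|\operatorname{W}_{s,t}v\|_r\le C\,(\det[A(t)-A(s)])^{-\frac{p-1}{2p}}\|v\|_q$, with exactly the constant $C=p^{-\frac{n}{2p}}(4\pi)^{-\frac{n(p-1)}{2p}}$ of the statement. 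What remains is to turn the determinant into a quantity involving $F$.

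\textbf{Step 1 (determinant bound).} I would show that
$$
\det[A(t)-A(s)]\ge (F(t)-F(s))^n,\qquad (s,t)\in\Delta.
$$
The matrix $A(t)-A(s)=\int_s^t a(\tau)\,d\tau$ is symmetric. For any unit vector $\xi$ we have $\langle\xi,(A(t)-A(s))\xi\rangle=\int_s^t\langle\xi,a(\tau)\xi\rangle d\tau\ge\int_s^t\lambda_{\min}(a(\tau))d\tau=F(t)-F(s)$. By Rayleigh's principle, every eigenvalue of $A(t)-A(s)$ is therefore at least $F(t)-F(s)$, and the determinant, being the product of the eigenvalues, is at least $(F(t)-F(s))^n$. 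One detail needs checking: $\lambda_{\min}(a(\cdot))$ is continuous, since eigenvalues depend continuously on the matrix, so $F$ is well defined. Consequently
$$
\|\operatorname{W}_{s,t}\|_{q\to r}\le C\,(F(t)-F(s))^{-\frac{n(p-1)}{2p}}.
$$
Applying this with $s=0$ gives the first term $C F(t)^{-\frac{n(p-1)}{2p}}\|u_0\|_q$.

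\textbf{Step 2 (Duhamel term).} The integral is bounded by $C\int_0^t (F(t)-F(s))^{-\frac{n(p-1)}{2p}}\|f(s)\|_q\,ds$. Hölder's inequality in $s$ with the conjugate pair $(\alpha,\beta)$ then gives
$$
C\,\|\|f(\cdot)\|_q\|_{L^\beta((0,t))}\Big(\int_0^t (F(t)-F(s))^{-\frac{n(p-1)\alpha}{2p}}ds\Big)^{1/\alpha},
$$
which is precisely \eqref{constant}. The main obstacle I expect here is making sure this integral is finite, because the singularity sits at $s=t$. When $\lambda_{\min}\ge\gamma_0>0$ we have $F(t)-F(s)\ge\gamma_0(t-s)$, and the integrand is dominated by $\gamma_0^{-\frac{n(p-1)\alpha}{2p}}(t-s)^{-\frac{n(p-1)\alpha}{2p}}$. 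This is integrable exactly when $\frac{n(p-1)\alpha}{2p}<1$, which is the hypothesis $n(p-1)\alpha<2p$. Without a lower bound on $\lambda_{\min}$ one only has positive definiteness, so $\lambda_{\min}(a(t))>0$ is continuous on $[0,t]$ and hence bounded below by a positive number on that compact interval; the same argument then shows finiteness locally in $t$.

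\textbf{Step 3 (particular case).} Assume $\lambda_{\min}\ge\gamma_0>0$, so that $F(t)\ge\gamma_0 t$ and $F(t)-F(s)\ge\gamma_0(t-s)$. Integrating the power explicitly gives
$$
\int_0^t(t-s)^{-\kappa}ds=\frac{t^{1-\kappa}}{1-\kappa},\qquad \kappa=\frac{n(p-1)\alpha}{2p}.
$$
Taking the $1/\alpha$ power produces the factor $t^{\frac1\alpha-\frac{n(p-1)}{2p}}\big(1-\frac{n(p-1)\alpha}{2p}\big)^{-1/\alpha}$, and the $\gamma_0$-powers combine into the common prefactor $\gamma_0^{-\frac{n(p-1)}{2p}}$. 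For the $u_0$ term, the bound $F(t)^{-\frac{n(p-1)}{2p}}\le(\gamma_0 t)^{-\frac{n(p-1)}{2p}}$ follows directly. The stated exponent $t^{-\frac{n(p-1)\alpha}{2p}}$ differs from this; I would either record the sharper exponent or note that the constant $C$ absorbs the adjustment for bounded $t$. I would flag this as a point where the statement's exponent may need to be read with care.
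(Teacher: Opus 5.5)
Your proposal is correct and follows the paper's proof essentially step for step. Both use Duhamel's formula with the $L^q\to L^r$ kernel bound of Proposition~\ref{pr4}, the eigenvalue estimate $\det[A(t)-A(s)]\ge (F(t)-F(s))^n$, H\"older's inequality in $s$ with the pair $(\alpha,\beta)$, and then the explicit power integral when $\lambda_{\min}\ge\gamma_0$.

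Your suspicion about the $u_0$-term is justified. The paper's own chain arrives at $t^{-\frac{n(p-1)}{2p}}$ in its penultimate line, so the stated $t^{-\frac{n(p-1)\alpha}{2p}}$ is a typo. For $a\equiv I$ and $p>1$, heat-kernel scaling shows that bound cannot hold with a $t$-independent constant as $t\to\infty$. You should therefore record the sharper exponent rather than try to absorb the difference into $C$.
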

\begin{proof}
We start the proof by the following calculation:

\begin{align}\label{detA(t)-A(s)lambda}
	\det[A(t)-A(s)]&\geq (\lambda_{\min}[A(t)-A(s)])^n =\left(\lambda_{\min}\left[\int\limits_s^t (a(\tau))d\tau\right]\right)^n\notag\\
	&\geq\left(\int\limits_s^t \lambda_{min}(a(\tau))d\tau\right)^n=(F(t)-F(s))^n,\quad\forall (s,t)\in \Delta,
\end{align}
where \eqref{detA(t)-A(s)lambda} is valid based on the following:
\begin{align}\label{lambdamin1}
	\lambda_{\min}\left[\int\limits_s^t a(\tau)d\tau\right]&=\inf_{|\xi|=1}\sum^{n}_{i,j=1}\xi_{i}\xi_{j}\int\limits_s^t a_{i,j}(\tau)d\tau=\inf_{|\xi|=1}\int\limits_s^t \sum^{n}_{i,j=1}\xi_{i}\xi_{j}a_{i,j}(\tau)d\tau\notag\\&\geq \int\limits_s^t \inf_{|\xi|=1} \sum^{n}_{i,j=1}\xi_{i}\xi_{j}a_{i,j}(\tau)d\tau=\int\limits_s^t \lambda_{\min}a(\tau) d\tau, \quad\forall (s,t)\in \Delta.
\end{align}
Raising both sides of \eqref{lambdamin1} to $n$, one can conclude the validity of \eqref{detA(t)-A(s)lambda}.

In particular, setting $s=0$ in \eqref{detA(t)-A(s)lambda}, we obtain  
\begin{align}\label{detA(t)lambda}
	\det[A(t)]&\geq F^n(t),\quad\forall t\in \mathbb{R}_{+}.
\end{align}

We now take the $L^r-norm$ of both sides of \eqref{solution1}. Applying the equations \eqref{detA(t)lambda}, \eqref{detA(t)-A(s)lambda}, and \eqref{normestimatesimple},  together with H\"{o}lder's inequality for $1<\alpha, \beta<\infty$ satisfying $\frac{1}{\alpha}+\frac{1}{\beta}=1$, and proceeding under the assumption that
$n(p-1)\alpha<2p$, we deduce that $	\beta>\frac1{1-\frac{(p-1)n}{2p}}$. Then, using the hypothesis
$
f\in L^\beta((0,t),L^q(\mathbb{R}^n))$, we obtain
\begin{align}
	\|u(t)\|_{r}&\leq \|\operatorname{W}_{0,t}u_{0}\|_r+\int\limits_0^{t}\|\operatorname{W}_{s,t}f(s)\|_{r}ds\notag\\
	&\leq \frac{1}{p^{\frac{n}{2p}}\left((4\pi)^n \det[A(t)]\right)^{\frac{p-1}{2p}}}\|u_{0}\|_{q}
	+\int\limits_0^{t}\frac{1}{p^{\frac{n}{2p}}\left((4\pi)^n\det[A(t)-A(s)]\right)^{\frac{p-1}{2\pi}}}\|f(s)\|_{q}ds\notag\\
	&\leq \frac{1}{p^{\frac{n}{2p}}\left((4\pi)^n(F(t))^n\right)^{\frac{p-1}{2p}}}\|u_{0}\|_{q}
	+\int\limits_0^{t}\frac{1}{p^{\frac{n}{2p}}\left((4\pi)^{n}(F(t)-F(s))^n\right)^{\frac{p-1}{2\pi}}}\|f(s)\|_{q}ds\notag\\
	&\leq C F(t)^{-\frac{n(p-1)}{2p}}\|u_{0}\|_{q}+C
	\left(\int\limits_0^t \frac{1}{(F(t)-F(s))^{\frac{n(p-1)\alpha}{2p}}}ds\right)^{\frac{1}{\alpha}}\left(\int\limits_0^t \|f(s)\|^\beta_{q}ds\right)^{\frac{1}{\beta}}\notag\\
	&=CF(t)^{-\frac{n(p-1)}{2p}}\|u_{0}\|_q+C\left(\int\limits_0^t \frac{1}{(F(t)-F(s))^{\frac{n(p-1)\alpha}{2p}}}ds\right)^{\frac{1}{\alpha}}\|\|f(\cdot)\|_{q}\|_{L^\beta((0,t))}.
\end{align}	
Now, we apply the special condition $\lambda_{\min}(a(t)) >\gamma_0>0$, then we can conclude that \begin{align}\label{detA(t)-A(s)gamma}
	\det[A(t)-A(s)]\geq\left(F(t)-F(s)\right)^n\geq ((t-s)\gamma_0)^n,\quad\forall (s,t)\in \Delta.
\end{align}

In particular, setting $s=0$ in \eqref{detA(t)-A(s)gamma}, we obtain
\begin{align}\label{detA(t)gamma}
	\det[A(t)]&\geq F^n(t)\geq (t\gamma_0)^n,\quad\forall t\in \mathbb{R}_{+}.
\end{align}
Therefore, we conclude that
\begin{align}\label{estimateoperatornorm}
	\|u(t)\|_{r}&\leq \|\operatorname{W}_{0,t}u_{0}\|_r+\int\limits_0^{t}\|\operatorname{W}_{s,t}f(s)\|_{r}ds\notag\\
	&\leq \frac{1}{p^{\frac{n}{2p}}\left((4\pi)^n \det[A(t)]\right)^{\frac{p-1}{2p}}}\|u_{0}\|_{q}
	+\int\limits_0^{t}\frac{1}{p^{\frac{n}{2p}}\left((4\pi)^n\det[A(t)-A(s)]\right)^{\frac{p-1}{2\pi}}}\|f(s)\|_{q}ds\\
	&\leq \frac{1}{p^{\frac{n}{2p}}(4\pi)^{\frac{n(p-1)}{2p}}\gamma_0^{\frac{n(p-1)}{2p}} t^{\frac{n(p-1)}{2p}}}\|u_{0}\|_{q}
	+\int\limits_0^{t}\frac{1}{p^{\frac{n}{2p}}(4\pi)^{\frac{n(p-1)}{2p}}\gamma_0^{\frac{n(p-1)}{2p}} (t-s)^{\frac{n(p-1)}{2p}}}\|f(s)\|_{q}ds\notag\\
	&\leq C\gamma_0 ^{-\frac{n(p-1)}{2p}} t^{-\frac{n(p-1)}{2p}}\|u_{0}\|_{q}+C
	\left(\int\limits_0^t \frac{1}{\gamma_0^{\frac{n(p-1)}{2p}}(t-s)^{\frac{n(p-1)\alpha}{2p}}}ds\right)^{\frac{1}{\alpha}}\left(\int\limits_0^t \|f(s)\|^\beta_{q}ds\right)^{\frac{1}{\beta}}\notag\\
	&=C\gamma_0 ^{-\frac{n(p-1)}{2p}}\left(t^{-\frac{n(p-1)\alpha}{2p}}\|u_{0}\|_q+\dfrac{t^{\frac{1}{\alpha}-\dfrac{n(p-1)}{2p}}}{\left(1-\frac{n(p-1)\alpha}{2p} \right)^{\frac{1}{\alpha}}} \|\|f(\cdot)\|_{q}\|_{L^\beta((0,t))}\right),\quad \forall t\in\mathbb{R}_{+}.\notag
\end{align}	
\end{proof}

\section{Stability}
In this section, we study the following proposition concerning the stability of the solution maps. We begin with the case of $\mathcal{S}(\mathbb{R}^n)$.
\begin{proposition}\label{pr7}
	The solution maps, homogeneous
	\begin{align}\label{firsthomogeneous}
		\mathrm{W}_{0,\natural}&:\mathcal{S}(\mathbb{R}^n)\longrightarrow C^\infty(\mathbb{R}_+,\mathcal{S}(\mathbb{R}^n))\,\cap\,C([0,+\infty),\mathcal{S}(\mathbb{R}^n)),\notag\\
		\mathrm{W}_{0,\natural}&h(t)=\mathrm{W}_{0,t}h,\quad\forall t\in[0,+\infty), ~\forall h\in \mathcal{S}(\mathbb{R}^n),
	\end{align}
	and inhomogeneous
	\begin{align}\label{firstinhomoeneous}
		\int\limits_0^\natural \mathrm{W}_{\sharp,\natural} :~ 
		&C^\infty(\mathbb{R}_+, \mathcal{S}(\mathbb{R}^n))
		\cap C([0,+\infty), \mathcal{S}(\mathbb{R}^n))) \notag\\
		&\longrightarrow
		C^\infty(\mathbb{R}_+, \mathcal{S}(\mathbb{R}^n))
		\cap C([0,+\infty), \mathcal{S}(\mathbb{R}^n)),\notag\\
		\int\limits_0^\natural&\mathrm{W}_{\sharp,\natural}h(t)=\int\limits_0^t\mathrm{W}_{s,t}h(s)ds,\quad\forall t\in[0,+\infty),~\forall h \in C^\infty(\mathbb{R}_+, \mathcal{S}(\mathbb{R}^n))
		\cap C([0,+\infty), \mathcal{S}(\mathbb{R}^n))),
	\end{align}
	are continuous and linear.
\end{proposition}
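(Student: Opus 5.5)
Linearity of both maps is immediate from the linearity of $\mathrm{W}_{s,t}$ and of the integral. Both maps are well defined by Proposition \ref{pr3}: for the homogeneous map take $f=0$, and for the inhomogeneous map take $u_0=0$. Since all spaces involved are Fréchet (or at least metrizable locally convex), continuity of a linear map reduces to bounding every target seminorm by finitely many source seminorms. The target space $C^\infty(\mathbb{R}_+,\mathcal{S}(\mathbb{R}^n))\cap C([0,+\infty),\mathcal{S}(\mathbb{R}^n))$ carries the seminorms $\|\cdot\|_{\alpha,\beta,\gamma,K}$ and $\|\cdot\|_{\alpha,\beta,b}$ of Remark \ref{notation2}. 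So the plan is to revisit the estimates in the proof of Proposition \ref{pr3} and record that the right-hand sides are seminorms of the data, multiplied by constants that depend only on $\alpha,\beta,\gamma$, on $K$ or $b$, and on $a$.

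For the homogeneous map $h\mapsto \mathrm{W}_{0,\natural}h$, I will use \eqref{lemma210}, i.e. Lemma \ref{avoidingstot} part 1 with $s=0$. Taking $\sup_{t\in K}$ gives
$$\|\mathrm{W}_{0,\cdot}h\|_{\alpha,\beta,\gamma,K}\lesssim_{\alpha,\beta,\gamma,K,a}\sum_{|k'|\le N'}\sum_{|b|\le N}\|h\|_{k',b},$$
where the constant involves $\int_0^{\max K}\|a\|_{op}$ and $\sup_K\Theta_{\mu,\gamma}$, both finite because $K\subset\mathbb{R}_+$ is compact and $a$ is smooth there. Similarly, the $\gamma=0$ estimate \eqref{lemma22}, together with continuity of $a$ on $[0,b]$, bounds $\|\mathrm{W}_{0,\cdot}h\|_{\alpha,\beta,b}$ by finitely many $\|h\|_{k',b'}$. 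Since $N,N'$ depend only on $\alpha,\beta,\gamma,n$, both bounds are of the required form and continuity follows.

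For the inhomogeneous map, the source space carries the seminorms $\|h\|_{\alpha,\beta,\gamma,K}$ and $\|h\|_{\alpha,\beta,b}$. I will follow \eqref{AKMR2}. First, Remark \ref{Leibniz} splits $\partial_t^\gamma\int_0^t\mathrm{W}_{s,t}h(s)\,ds$ into the boundary term $\gamma\partial_t^{\gamma-1}h(t)$ and the integral $\int_0^t\partial_t^\gamma\mathcal{W}(\cdot;s,t)\ast h(s)\,ds$.
\begin{itemize}
\item[1.] The boundary term is bounded by $\gamma\|h\|_{\alpha,\beta,\gamma-1,K}$.
\item[2.] For the integral, Lemma \ref{avoidingstot} part 1 gives a bound of the form $\max K\cdot C(\alpha,\beta,\gamma,K,a)\sup_{s\in[0,\max K]}\sum_{k',b}\|h(s)\|_{k',b}$. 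This is at most $C\sum\|h\|_{k',b,\lceil\max K\rceil}$, a finite sum of $C([0,\infty),\mathcal{S})$-seminorms of $h$.
\item[3.] The seminorms $\|\cdot\|_{\alpha,\beta,b}$ are handled the same way, via the $\gamma=0$ bound in \eqref{cntsatzerointegral}, giving a constant times $b\sum\|h\|_{k',b',b}$.
\end{itemize}

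I expect the main obstacle to be the integral term for $\gamma\ge1$. There $\partial_t^\gamma\mathcal{W}(\cdot;s,t)$ is singular as $s\to t$, and the factor $\Theta_{\mu,\gamma}(t)$ only controls derivatives of $A(t)$, which is harmless on compact $K\subset\mathbb{R}_+$. The point to watch is that the Lemma \ref{avoidingstot} bound is uniform in $s\in[0,t]$, because it uses only $\int_0^t\|a\|_{op}$ and not $(A(t)-A(s))^{-1}$; I will rely on that uniformity exactly as stated. If it proved insufficient, the fallback is to move the time derivatives onto $h$: substitute $\partial_t\mathrm{W}_{s,t}=\mathcal{L}_t\mathrm{W}_{s,t}$ and write $\mathcal{L}_t\mathrm{W}_{s,t}h(s)=\mathrm{W}_{s,t}\mathcal{L}_t h(s)$, which trades each time derivative for spatial derivatives of $h$ and therefore needs only the equicontinuity of Lemma \ref{avoidingstot} part 2.
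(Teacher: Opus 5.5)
Your proposal is correct and is essentially the paper's proof. Both reduce continuity to bounding the target seminorms $\|\cdot\|_{\alpha,\beta,\gamma,K}$ and $\|\cdot\|_{\alpha,\beta,b}$ by source seminorms, using Lemma~\ref{avoidingstot} with $s=0$ (and its $\gamma=0$ case) for $\mathrm{W}_{0,\natural}$, and using the Remark~\ref{Leibniz} splitting plus the $s$-uniform bound of Lemma~\ref{avoidingstot} for the inhomogeneous map. That uniformity, which you were worried about, does hold, because $e^{-\langle (A(t)-A(s))\xi,\xi\rangle}\le 1$ and $\|A(t)-A(s)\|_{op}\le\int_0^t\|a(\tau)\|_{op}\,d\tau$.

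One caveat you inherit from Remark~\ref{Leibniz}: for $\gamma\ge 2$ the boundary term is not $\gamma\,\partial_t^{\gamma-1}h(t)$. It is $\sum_{k=0}^{\gamma-1}\partial_t^{\gamma-1-k}\bigl(P_k(t)h(t)\bigr)$, where $P_0=1$ and $P_{k+1}=\dot P_k+P_k\mathcal{L}_t$, with $\dot P_k$ denoting $P_k$ with its coefficients differentiated in $t$; for example, it equals $\partial_t h(t)+\mathcal{L}_t h(t)$ when $\gamma=2$. This is exactly what your fallback of trading $\partial_t\mathrm{W}_{s,t}$ for $\mathcal{L}_t\mathrm{W}_{s,t}$ produces. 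It is still bounded by finitely many seminorms $\|h\|_{\alpha,\beta',\gamma',K}$ with $\gamma'\le\gamma-1$, times suprema over $K$ of derivatives of $a$, so your continuity conclusion is unaffected.
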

\begin{proof}
	To prove continuity and linearity of \eqref{firsthomogeneous} and \eqref{firstinhomoeneous}, it suffices to first establish these properties for the following maps:
	\begin{itemize}
		\item [i.]
		\begin{align}\label{i}
			\mathrm{W}_{0,\natural}&:\mathcal{S}(\mathbb{R}^n)\longrightarrow C^\infty(\mathbb{R}_+,\mathcal{S}(\mathbb{R}^n)),\notag\\
			&\mathrm{W}_{0,\natural}h(t)=\mathrm{W}_{0,t}h,\quad\forall t\in[0,+\infty), ~\forall h\in \mathcal{S}(\mathbb{R}^n),
		\end{align}
		and
		\item[ii.]
		\begin{align}\label{ii}
			\mathrm{W}_{0,\natural}&:\mathcal{S}(\mathbb{R}^n)\longrightarrow C([0,+\infty),\mathcal{S}(\mathbb{R}^n)),\notag\\		&\mathrm{W}_{0,\natural}h(t)=\mathrm{W}_{0,t}h,\quad\forall t\in[0,+\infty), ~\forall h\in \mathcal{S}(\mathbb{R}^n),
		\end{align}
		as well as
		\item [iii.]
		\begin{align}\label{iii}
			&\int\limits_0^\natural\mathrm{W}_{\sharp,\natural}:C^\infty(\mathbb{R}_+,\mathcal{S}(\mathbb{R}^n))\cap\,C([0,+\infty),\mathcal{S}(\mathbb{R}^n))\longrightarrow C^\infty(\mathbb{R}_+,\mathcal{S}(\mathbb{R}^n)),\notag\\			&\int\limits_0^\natural\mathrm{W}_{\sharp,\natural}h(t)=\int\limits_0^t\mathrm{W}_{s,t}h(s)ds,\quad\forall t\in[0,+\infty), ~\forall h\in C^\infty(\mathbb{R}_+,\mathcal{S}(\mathbb{R}^n))\cap\,C([0,+\infty),\mathcal{S}(\mathbb{R}^n)),
		\end{align}
		finally
		\item[iv.]
		\begin{align}\label{iv}
			\int\limits_0^\natural\mathrm{W}_{\sharp,\natural}&:C^\infty(\mathbb{R}_+,\mathcal{S}(\mathbb{R}^n))\cap\,C([0,+\infty),\mathcal{S}(\mathbb{R}^n))\longrightarrow C([0,+\infty),\mathcal{S}(\mathbb{R}^n)),\notag\\
			&\int\limits_0^\natural\mathrm{W}_{\sharp,\natural}h(t)=\int\limits_0^t\mathrm{W}_{s,t}h(s)ds,\quad\forall t\in[0,+\infty), ~\forall h\in C^\infty(\mathbb{R}_+,\mathcal{S}(\mathbb{R}^n))\cap\,C([0,+\infty),\mathcal{S}(\mathbb{R}^n)).
		\end{align}
	\end{itemize}
	The linearity of operators \eqref{i}, \eqref{ii}, \eqref{iii}, and \eqref{iv} is immediate.
	To prove the continuity of \eqref{i}, let $K \subset \mathbb{R}_+$ be a compact set and $\alpha, \beta\in\mathbb{N}_0^n$, $\gamma\in\mathbb{N}_0$.
		Since $h\in\mathcal{S}(\mathbb{R}^n)$, using Lemma \ref{avoidingstot}, we have
		\begin{align}\label{ineqstablle1}
			&\left\| \mathrm{W}_{0,\natural} h \right\|_{\alpha,\beta,\gamma, K}= \sup_{t \in K} \sup_{x \in \mathbb{R}^n} \left| x^\alpha \partial_x^\beta \partial_t^\gamma \left( \mathcal{W}(\cdot;0,t) * h  (x)\right)\right|\notag\\
			&\lesssim_{\alpha,\beta,\gamma}\sum_{m \leq \alpha} \sum_{\zeta \leq \alpha-m} 
			\sum_{\mu'=1}^{|m|} \sum_{\mu=1}^\gamma \sup_{t\in K}
			\left(\int_0^t \|a(\tau)\|_{\mathrm{op}} \, d\tau \right)^{\mu'} 
			\Theta_{\mu,\gamma}(t) 
			\sum_{|k'|\le N'}\sum_{|b|\le N}\|h\|_{k',b},
		\end{align}
		where
		$$
		\Theta_{\mu,\gamma}(t)\doteq  \left(\max_{1 \le q \le \gamma} \|\partial_t^{q} A(t)\|_{\mathrm{op}}\right)^{\mu},
		$$
		and $N,N'$ satisfying
		$$
		N > \mu'+\mu+|\beta|-|\zeta| + n, \quad
		N' > |\alpha-m-\zeta|+N +n.
		$$
		Note that since $A\in C^{\infty}(\mathbb{R}_{+},\mathrm{GL}(n))$, the following supremum on compact subset $K$ is finite.
		$$
		\sup_{t\in K} \left(\max_{1 \le q \le \gamma} \|\partial_t^{q} A(t)\|_{\mathrm{op}}\right)^{\mu}\doteq C_{\mu, \gamma, K}<\infty.
		$$
		Therefore, continuing from \eqref{ineqstablle1} and using that $a\in C^{\infty}([0,\infty),\mathrm{GL}(n))$, we obtain
		\begin{align}
			&\left\| \mathrm{W}_{0,\natural} h \right\|_{\alpha,\beta,\gamma, K}\notag\\
			&\lesssim_{\alpha,\beta,\gamma,K}\sum_{m \leq \alpha} \sum_{\zeta \leq \alpha-m} 
			\sum_{\mu'=1}^{|m|} \sum_{\mu=1}^\gamma 
			\left(\sup_{\tau\in[0,\max{K}]}\|a(\tau)\|_{op} \right)^{\mu'} 
			C_{\mu, \gamma, K}
			\sum_{|k'|\le N'}\sum_{|b|\le N}\|h\|_{k',b}\notag\\
			&\lesssim_{\alpha,\beta, \gamma, K}\sum_{|k'|\le N'}\sum_{|b|\le N}\|h\|_{k',b},
		\end{align}
		where $\|\cdot\|_{k',b}$ is a Schwartz seminorm. Hence the proof of \eqref{i} is complete.
	
To prove \eqref{ii}, let $b\in\mathbb{N}$, then for every $\alpha, \beta\in\mathbb{N}_0^n$, we have
	\begin{align}
			&\|\mathrm{W}_{0,\natural} h\|_{\alpha, \beta, b}=\sup_{t\in[0,b]}\left\|\left(\mathrm{W}_{0,\natural} h \right)(t)  \right\|_{\alpha, \beta}=\sup_{t\in[0,b]}\left\|\mathrm{W}_{0,t} h \right\|_{\alpha, \beta}\notag\\
			&=\sup_{t\in[0,b]}\sup_{x\in\mathbb{R}^n}\left|x^\alpha\partial_x^\beta\mathrm{W}_{0,t} h\right|
			=\sup_{t\in[0,b]}\sup_{x\in\mathbb{R}^n}\left|x^\alpha\partial_x^\beta\left( \mathcal{W}(\cdot;0,t) * h  (x)\right)\right|\displaybreak\notag\\
			&=\sup_{t\in[0,b]}\sup_{x\in\mathbb{R}^n}\left|x^\alpha\left( \mathcal{W}(\cdot;0,t) *\partial_x^\beta h  (x)\right)\right|\notag\\
			&\lesssim_{\alpha,\beta,b} \sum_{m\le \alpha} \sum_{\zeta\le \alpha-m} \sum_{\mu'=1}^{|m|} 
			\left(\sup_{\tau\in[0,b]}\| a(\tau)\|_{op}\right)^{\mu'}\sum_{|k'|\le N'}\sum_{|b|\le N}\|h\|_{k',b}
	\end{align}
	for integers $N,N'$ satisfying
	\[
	N > \mu'+\mu+|\beta|-|\zeta| + n, \quad
	N' >| \alpha-m-\zeta|+N + n.
	\]
	It is worth noting that the final estimate follows from \eqref{lemma22} in Lemma \ref{avoidingstot}, together with the continuity of the matrix function $a$ at zero. Moreover, note that $\|\cdot\|_{k',b}$ is a Schwartz seminorm. Thus, the proof of part $ii$ is complete.
	
	To prove the continuity of \eqref{iii}, let $h\in C^\infty(\mathbb{R}_+,\mathcal{S}(\mathbb{R}^n))\cap\,C([0,+\infty),\mathcal{S}(\mathbb{R}^n))$ and let $K\subset \mathbb{R}_{+}$ be compact. Since $\mathcal{W}(\cdot;s,t) \in \mathcal{S}(\mathbb{R}^n)$, the convolution $\mathcal{W}(\cdot;s,t) \ast h(s)$ gives a Schwartz function and converges locally uniformly in $x$, together with all derivatives with respect to $x$. The equicontinuity of the family 
	$\{\operatorname{W}_{s,t}\}_{(s,t)\in C}$ on compact sets 
	$C \subset \overline{\Delta}$ as stated in Lemma \ref{avoidingstot} ensures that
	\[
	[0,t] \ni s \longmapsto \operatorname{W}_{s,t}h(s) \in \mathcal{S}(\mathbb{R}^n),
	\]
	is continuous.
	By the same equicontinuity, the mapping
	$$
	\overline{\Delta} \cap [0,\max K]^2 \ni (s,t) 
	\longmapsto \operatorname{W}_{s,t}\partial^{\beta}_{x}h(s)
	\in \mathcal{S}(\mathbb{R}^n),
	$$
	is jointly continuous. Hence, by \eqref{lemma21} in Lemma \ref{avoidingstot}, for every fixed $\alpha,\beta\in\mathbb{N}_0^n$ and $\gamma\in\mathbb{N}_0$, we have
	\begin{align}\label{normsigma}
		&\left\|\int\limits_0^\natural\mathrm{W}_{\sharp,\natural}h\right\|_{\alpha,\beta,\gamma,K}=\sup_{t\in K}\sup_{x\in\mathbb{R}^n}\left|x^{\alpha}\partial_{x}^{\beta}\partial_t^\gamma\int\limits_0^t\mathrm{W}_{s,t}h(s,x)ds\right|\notag\\
		&=\sup_{t\in K}\sup_{x\in\mathbb{R}^n}\left|x^{\alpha}\partial_{x}^{\beta}\left(\gamma\partial_t^{\gamma-1}h(t,x)+\int\limits_0^t\partial_{t}^\gamma\mathrm{W}_{s,t}h(s,x)ds\right)\right|\notag\\
		&\leq \sup_{t\in K}\sup_{x\in\mathbb{R}^n}\left|x^{\alpha}\partial_{x}^{\beta}\gamma\partial_t^{\gamma-1}h(t,x)\right|+\sup_{t\in K}\sup_{x\in\mathbb{R}^n}\left|x^{\alpha}\partial_x^{\beta}\int\limits_0^t\partial_{t}^\gamma\mathrm{W}_{s,t}h(s,x)ds\right|\notag\\
		&\leq \gamma\|h\|_{\alpha,\beta,\gamma-1,K}+\sup_{t\in K}\sup_{x\in\mathbb{R}^n}\int\limits_0^t\left|x^{\alpha}\left(\partial_{t}^\gamma\mathcal{W}(\cdot;s,t)\ast \partial_x^{\beta}h(s,\cdot)(x)\right)\right|ds\notag\\
		&\lesssim_{\alpha,\beta,\gamma}\|h\|_{\alpha,\beta,\gamma-1,K}+\sup_{t \in K} \sum_{m \le \alpha} \sum_{\zeta \le \alpha-m} 
		\sum_{\mu'=1}^{|m|} \sum_{\mu=1}^\gamma 
		\left( \int\limits_0^t \|a(\tau)\|_{\mathrm{op}} \, d\tau \right)^{\mu'} 
		\Theta_{\mu,\gamma}(t) \int\limits_{0}^t
		I_{m,\zeta,\mu',\mu,\beta}(h(s)) ds \notag\displaybreak\\
		&\lesssim_{\alpha,\beta, \gamma, n} \|h\|_{\alpha,\beta,\gamma-1,K}\notag\\
		&+\max K  \sum_{m \le \alpha} \sum_{\zeta \le \alpha-m} 
		\sum_{\mu'=1}^{|m|} \sum_{\mu=1}^\gamma 
		\left( \int\limits_0^{\max K} \|a(\tau)\|_{\mathrm{op}} \, d\tau \right)^{\mu'} \sup_{t \in K}
		\Theta_{\mu,\gamma}(t) \, \sum_{|k'|\le N'}\sum_{|b|\le N}\|h(s)\|_{k',b},
	\end{align}
	where integers $N,N'$ satisfy
	\[
	N > \mu'+\mu+|\beta|-|\zeta| + n, \quad
	N' > |\alpha-m-\zeta|+N + n.
	\]
	Thus, the proof of part $iii$ is complete.
	
To prove \eqref{iv}, let $ h\in C^\infty(\mathbb{R}_+,\mathcal{S}(\mathbb{R}^n))\cap\,C([0,+\infty),\mathcal{S}(\mathbb{R}^n))$, and let $b\in \mathbb{N}$. Since $\mathcal{W}(\cdot;s,t) \in \mathcal{S}(\mathbb{R}^n)$, the convolution $\mathcal{W}(\cdot;s,t) \ast h(s)$ gives a Schwartz function and converges locally uniformly in $x$, together with all derivatives with respect to $x$. Therefore, for every $\alpha, \beta\in\mathbb{N}_0^n$, using Lemma~\ref{avoidingstot} and the facts that $a\in C^{\infty}([0,\infty),\mathrm{GL}(n))$ and that $f\in C([0,\infty),\mathcal{S}(\mathbb{R}^{n}))$, we have
\begin{align}
	&\left\|\int\limits_0^\natural\mathrm{W}_{\sharp,\natural}h\right\|_{\alpha,\beta,b}=\sup_{t\in [0,b]}\sup_{x\in\mathbb{R}^n}\left|x^{\alpha}\partial_{x}^{\beta}\int\limits_0^t\mathrm{W}_{s,t}h(s,x)ds\right|\notag\\
			&=\sup_{t\in [0,b]}\sup_{x\in\mathbb{R}^n}  \left|\int\limits_0^t x^{\alpha}\partial^{\beta}_{x}h(s)\ast\mathcal{W}(\cdot;s,t)(x) ds\right|\leq \sup_{t\in [0,b]}\sup_{x\in\mathbb{R}^n}  \int\limits_0^t \left|x^{\alpha}\partial^{\beta}_{x}h(s)\ast\mathcal{W}(\cdot;s,t)(x)\right| ds\notag\\
			&\lesssim_{\alpha,\beta}
			\sum_{m\le \alpha}
			\sum_{\zeta\le \alpha-m}
			\sum_{\mu'=1}^{|m|}\sup_{t\in[0,b]}
			\left(\int\limits_{0}^{t}\| a(\tau)\|_{op}\,d\tau\right)^{\mu'}\, 
			\sum_{|b'|\leq N}\sum_{|k'|\leq N'} 
			\int^{t}_{0}\|h(s)\|_{k', b'}ds\notag\\
			&\lesssim_{b,\alpha}	\sum_{m\le \alpha}
			\sum_{\zeta\le \alpha-m}
			\sum_{\mu'=1}^{|m|}
			\left(\sup_{\tau\in[0,b]}\| a(\tau)\|_{op}\right)^{\mu'}\, 
			\sum_{|b'|\leq N}\sum_{|k'|\leq N'} 
			\sup_{s\in[0,b]}\|h(s)\|_{k', b'}\notag\\
			&\forall \alpha, \beta\in\mathbb{N}_0, N > \mu'+\mu+|\beta|-|\zeta| + n, ~
			N' >| \alpha-m-\zeta|+ N + n,\notag
		\end{align}
		where $\|\cdot\|_{k', b'}$ is our Schwartz seminorm here.
	Thus, the proof of part iv is complete.
\end{proof}
Now, we present the following proposition regarding the stability of the solution maps in $W^{q,\infty}(\mathbb{R}^{n})$.
\begin{proposition} For any $q\in[1,+\infty)$, the solution maps, homogeneous
	\begin{align}\label{homogeneoussolution'1}
		\mathrm{W}_{0,\natural}&:L^q(\mathbb{R}^n)\longrightarrow C^\infty(\mathbb{R}_+,W^{q,\infty}(\mathbb{R}^n))\,\cap\,C([0,+\infty),L^q(\mathbb{R}^n)),\notag\\
		&\mathrm{W}_{0,\natural}h(t)=\mathrm{W}_{0,t}h,\quad\forall t\in[0,+\infty),
	\end{align}
	and inhomogeneous
	\begin{align}\label{inhomogeneoussolution'2}
		&\int\limits_0^\natural\mathrm{W}_{\sharp,\natural}:
		C^\infty(\mathbb{R}_+,W^{q,\infty}(\mathbb{R}^n))\,\cap\,C([0,+\infty),W^{q,\infty}(\mathbb{R}^n))\longrightarrow\notag\\ &C^\infty(\mathbb{R}_+,W^{q,\infty}(\mathbb{R}^n))\,\cap\,C([0,+\infty),L^q(\mathbb{R}^n)),\notag\\
		&\int\limits_0^\natural\mathrm{W}_{\sharp,\natural}h(t)=\int\limits_0^t\mathrm{W}_{s,t}h(s)ds,\quad\forall t\in[0,+\infty),
	\end{align}
	are continuous and linear.
\end{proposition}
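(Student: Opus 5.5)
Following the proof of Proposition~\ref{pr7}, I will split each solution map into its two components and prove continuity and linearity of each separately. For the homogeneous map these are (i) $\mathrm{W}_{0,\natural}:L^q(\mathbb{R}^n)\to C^\infty(\mathbb{R}_+,W^{q,\infty}(\mathbb{R}^n))$ and (ii) $\mathrm{W}_{0,\natural}:L^q(\mathbb{R}^n)\to C([0,+\infty),L^q(\mathbb{R}^n))$. For the inhomogeneous map the analogous pair is (iii) and (iv). Linearity is immediate in all four cases. For continuity, since all spaces involved are Fréchet (or Banach) with the seminorms fixed in Remark~\ref{notation2}, it suffices to bound each target seminorm by finitely many source seminorms.

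For (ii), Proposition~\ref{pr4} part 1 with $p=1$, $r=q$ gives $\|\mathrm{W}_{0,\natural}h\|_{q,b}=\sup_{t\in[0,b]}\|\mathcal{W}(\cdot;0,t)\ast h\|_q\le\|h\|_q$. Membership in the target space is Proposition~\ref{pr4} part 2.

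For (i), fix a compact $K\subset\mathbb{R}_+$, $j\in\mathbb{N}_0$ and $\ell\in\mathbb{N}_0^n$. Since $\partial_t^j\partial_x^\ell\mathrm{W}_{0,t}h=(\partial_t^j\partial_x^\ell\mathcal{W}(\cdot;0,t))\ast h$ by Proposition~\ref{pr4} part 2 and Remark~\ref{partialtimesx}, Young's inequality gives
$$\|\mathrm{W}_{0,\natural}h\|_{q,j,\ell,K}\le\sup_{t\in K}\|\partial_t^j\partial_x^\ell\mathcal{W}(\cdot;0,t)\|_1\,\|h\|_q.$$
The supremum is finite because $t\mapsto\partial_t^j\partial_x^\ell\mathcal{W}(\cdot;0,t)$ is continuous into $\mathcal{S}(\mathbb{R}^n)\hookrightarrow L^1(\mathbb{R}^n)$ on $\Delta$, hence bounded on the compact set $\{0\}\times K\subset\Delta$; here it matters that $\min K>0$.

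For (iv), the integral representation together with Young's inequality and $\|\mathcal{W}(\cdot;s,t)\|_1=1$ gives $\|\int_0^\natural \mathrm{W}_{\sharp,\natural}h\|_{q,b}\le b\,\sup_{s\in[0,b]}\|h(s)\|_q$, which is a seminorm of $C([0,+\infty),W^{q,\infty}(\mathbb{R}^n))$. For (iii), I will use Remark~\ref{Leibniz} (valid in $W^{q,\infty}(\mathbb{R}^n)$ by the argument in the proof of Proposition~\ref{pr5}) to write
$$\partial_t^j\partial_x^\ell\int_0^t\mathrm{W}_{s,t}h(s)\,ds=j\,\partial_t^{j-1}\partial_x^\ell h(t)+\int_0^t\partial_t^j\mathcal{W}(\cdot;s,t)\ast\partial_x^\ell h(s)\,ds.$$
The first term is controlled by $\|h\|_{q,j-1,\ell,K}$.

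The second term is the main obstacle. The obvious bound $\|\partial_t^j\mathcal{W}(\cdot;s,t)\|_1$ blows up like $(t-s)^{-j}$ as $s\to t$, so it is not integrable for $j\ge1$. To avoid this, I will instead pass to the Fourier side. Using Proposition~\ref{pr1} part 2, I will write $\partial_t\mathcal{W}=\mathcal{L}_t\mathcal{W}$, and iterating (via Faà di Bruno, as in Lemma~\ref{avoidingstot}) express $\partial_t^j\mathcal{W}(\cdot;s,t)$ as a finite sum of terms $c(t)\,\partial_x^{\kappa}\mathcal{W}(\cdot;s,t)$ with $|\kappa|\le2j$. The coefficients $c(t)$ are polynomials in $\partial_t^{k}A(t)$, so they are bounded on $K$, and they do not depend on $s$ because $\partial_t$ acts only through $A(t)$. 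Moving the $x$-derivatives onto $h(s)$ then gives
$$\Bigl\|\int_0^t\partial_t^j\mathcal{W}(\cdot;s,t)\ast\partial_x^\ell h(s)\,ds\Bigr\|_q\lesssim_{j,K}\max K\sum_{|\kappa|\le 2j}\sup_{s\in[0,\max K]}\|\partial_x^{\kappa+\ell}h(s)\|_q.$$
This is a finite combination of seminorms of $C([0,+\infty),W^{q,\infty}(\mathbb{R}^n))$, which yields continuity of (iii). Combining (i)–(iv) proves the proposition.
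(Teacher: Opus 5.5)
Your argument is essentially the paper's proof. Steps (i), (ii) and (iv) are the same Young's-inequality bounds using $\|\mathcal{W}(\cdot;s,t)\|_1=1$. In (iii) you, like the paper, avoid the non-integrable $(t-s)^{-j}$ singularity by trading $\partial_t^j\mathcal{W}$ for $t$-dependent spatial derivatives and moving them onto $h$. The paper iterates $\partial_t\mathcal{W}=\mathcal{L}_t\mathcal{W}$ with self-adjointness to get $\sum_k C^a_{\ell,j,k}(t)\partial_x^{\nu_{\ell,j,k}}h$; your Fourier-side operator $P_j(t,\partial_x)$, with $\partial_t^j\mathcal{W}(\cdot;s,t)=P_j(t,\partial_x)\mathcal{W}(\cdot;s,t)$, is the same object.

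One displayed formula is wrong, and you inherited it from Remark~\ref{Leibniz}: for $j\ge2$ the boundary term is not $j\,\partial_t^{j-1}\partial_x^\ell h(t)$. With $F(s,t)=\mathrm{W}_{s,t}\partial_x^\ell h(s)$ we have $(\partial_t^kF)(s,t)=\mathrm{W}_{s,t}P_k(t,\partial_x)\partial_x^\ell h(s)\to P_k(t,\partial_x)\partial_x^\ell h(t)$ as $s\to t$. Iterating the first-order Leibniz rule then gives
\[
\partial_t^j\int_0^tF(s,t)\,ds=\sum_{k=0}^{j-1}\frac{d^{\,j-1-k}}{dt^{\,j-1-k}}\Bigl[P_k(t,\partial_x)\partial_x^\ell h(t)\Bigr]+\int_0^t\partial_t^jF(s,t)\,ds .
\]
For $j=2$ the boundary term is $\partial_t\partial_x^\ell h(t)+\mathcal{L}_t\partial_x^\ell h(t)$. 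As a check, take $a(t)$ the identity matrix and $h(s)=\phi$ independent of $s$. Then $\partial_t^2\int_0^t\mathrm{W}_{s,t}\phi\,ds=\mathcal{L}_t\mathrm{W}_{0,t}\phi$, while your formula gives $\mathcal{L}_t\mathrm{W}_{0,t}\phi-\mathcal{L}_t\phi$.

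This does not affect continuity. Expanding the boundary terms by the Leibniz rule in $t$ gives finite sums of coefficients bounded on $K$ times $\partial_t^i\partial_x^{\kappa+\ell}h(t)$, with $i\le j-1$ and $|\kappa|\le 2(j-1)$. So they are controlled by finitely many seminorms $\|h\|_{q,i,\kappa+\ell,K}$, not by $\|h\|_{q,j-1,\ell,K}$ alone. The paper's estimate \eqref{wqinfty} silently drops this term altogether, so keeping it, as you do, is the right instinct; just state it correctly.
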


\begin{proof}
To prove continuity and linearity of \eqref{homogeneoussolution'1} and \eqref{inhomogeneoussolution'2}, it suffices to first establish these properties for the following maps:
	\begin{itemize}
		\item [i.]
		\begin{align}\label{i1}
			\mathrm{W}_{0,\natural}&:L^{q}(\mathbb{R}^n)\longrightarrow C^\infty(\mathbb{R}_+,W^{q,\infty}(\mathbb{R}^n)),\notag\\
			&\mathrm{W}_{0,\natural}h(t)=\mathrm{W}_{0,t}h,\quad\forall t\in[0,+\infty),
		\end{align}
		\item[ii.]
		\begin{align}\label{ii1}			\mathrm{W}_{0,\natural}&: 	L^{q}(\mathbb{R}^n)\longrightarrow	C([0,+\infty),L^{q}(\mathbb{R}^n)),\notag\\	&\mathrm{W}_{0,\natural}h(t)=\mathrm{W}_{0,t}h,\quad\forall t\in[0,+\infty),
		\end{align}
		and
\item [iii.]
	\begin{align}\label{iii1}
	&\int\limits_0^\natural \mathrm{W}_{\sharp,\natural} :
	C^\infty(\mathbb{R}_+,W^{q,\infty}(\mathbb{R}^n)) \cap\, C([0,+\infty),W^{q,\infty}(\mathbb{R}^n))
				\notag\\
				&\hspace{4.3em}\longrightarrow C^\infty(\mathbb{R}_+,W^{q,\infty}(\mathbb{R}^n)), \notag\\
				&\int\limits_0^\natural \mathrm{W}_{\sharp,\natural}h(t)
				=\int\limits_0^t \mathrm{W}_{s,t}h(s)\,ds,\quad
				\forall t\in[0,+\infty),
		\end{align}
		as well as
		\item[iv.]
		\begin{align}\label{iiii1}	&\int\limits_0^\natural\mathrm{W}_{\sharp,\natural}:C^{\infty}(\mathbb{R}_{+}, W^{q,\infty}(\mathbb{R}^{n}))\cap C([0,+\infty),W^{q,\infty}(\mathbb{R}^n))\notag\\
			&\hspace{4.3em}\longrightarrow C([0,+\infty),L^{q}(\mathbb{R}^n)),\notag\\	&\int\limits_0^\natural\mathrm{W}_{\sharp,\natural}h(t)=\int\limits_0^t\mathrm{W}_{s,t}h(s)ds,\quad\forall t\in[0,+\infty).
		\end{align}
	\end{itemize}
	Linearity of the operators \eqref{i1}, \eqref{ii1}, \eqref{iii1}, and \eqref{iiii1} is immediate. To prove the continuity of the mapping \eqref{i1},
	let $K\subset\mathbb{R}_+$ be a compact set and $q\in[1,\infty)$, $j\in\mathbb{N}_0$ as well as $\ell\in\mathbb{N}_0^n$. Then, since $h\in L^q(\mathbb{R}^n)$,
	considering Remark \ref{notation2} and Young's inequality, we have
	\begin{align}
		&\|\mathrm{W}_{0,\natural}h\|_{q,j,\ell,K} =\notag\\
		&\sup_{t\in K}\|\partial^{j}_{t}\partial_x^{\ell}(\mathrm{W}_{0,\natural}h(t))\|_{q}=\sup_{t\in K}\|\partial^{j}_{t}\partial^{\ell}_{x}\mathrm{W}_{0,t}h\|_{q}
		=\sup_{t\in K}\|\partial^{j}_{t}\partial^{\ell}_{x}(	\mathcal{W}(\cdot;0,t)*h)\|_{q}\notag\\&=	\sup_{t\in K}\|(\partial^{j}_{t}\partial^{\ell}_{x}\mathcal{W}(\cdot;0,t))*h)\|_{q}\leq \sup_{t\in K}\|\partial^{j}_{t}\partial^{\ell}_{x}\mathcal{W}(\cdot;0,t)\|_{1}\|h\|_{q}\lesssim_{j,\ell,K}\|h\|_q.
	\end{align}
	To prove \eqref{ii1}, let $b\in\mathbb{N}$ and $h\in L^q(\mathbb{R}^n)$. Then, by Remark \ref{notation2}, one has
	\begin{align}\label{tnotzero}
		\|\mathrm{W}_{0,\natural}h\|_{q,b}&=\sup_{t\in[0,b]}\left\| \left( \mathrm{W}_{0,\natural}h\right) (t)\right\|_{q}=	\sup_{t\in[0,b]}\left\| \mathrm{W}_{0,t}h\right\| _{q}=\sup_{t\in[0,b]}\left\| \mathcal{W}(\cdot;0,t)*h\right\| _{q}\\
		&\leq 	\sup_{t\in[0,b]}\left\|\mathcal{W}(\cdot;0,t) \right\| _{1}\left\| h\right\| _{q}=\left\| h\right\| _{q}\notag.
	\end{align}
To prove \eqref{iii1},  let $K\subset\mathbb{R}_+$ be a compact set and $q\in[1,\infty)$, $j\in\mathbb{N}_0$ as well as $\ell\in\mathbb{N}_0^n$. Suppose that $h\in C^\infty(\mathbb{R}_+,W^{q,\infty}(\mathbb{R}^n))\cap C([0,+\infty),W^{q,\infty}(\mathbb{R}^n))$, it follows that for every $\beta \in \mathbb{N}_0^n$ 
by Lemma 9.1 from \cite{Brezis} and the argument above the relation \eqref{maximalregularity1}, we have 
$$
\partial_x^\beta (h(s)\ast\mathcal{W}(\cdot;s,t))= (\partial_x^\beta h(s)\ast\mathcal{W}(\cdot;s,t))\quad \forall(s,t)\in\Delta.
$$
Moreover, the map
$
s \mapsto \partial_x^\beta h(s)
$
is continuous from $[0,t]$ into $L^q(\mathbb{R}^n)$. In particular,
$$
\sup_{s\in[0,t]} \|\partial_{x}^\beta h(s)\|_{q} < \infty.
$$ Moreover, the equicontinuity of $\{\operatorname{W}_{s,t}\}_{(s,t)\in C}$ on compact sets $C \subset \overline{\Delta}$, as stated in Lemma \ref{avoidingstot},  implies that
	\[
	[0,t]\ni s \mapsto \operatorname{W}_{s,t} h(s),
	\]
	is continuous. By the same equicontinuity of the family $\{\operatorname{W}_{s,t}\}_{(s,t)\in C}$ it follows that the function $\overline\Delta\cap[0,\max K]^2\ni(s,t)\mapsto\operatorname{W}_{s,t}\partial^{\ell}_{x}h(s)$ is (jointly) continuous, so that we can compute for $j>0$ the quantity
\begin{align}
	&\partial^{j}_{t}\partial_{x}^{\ell}	\int\limits_0^t\mathrm{W}_{s,t}h(s,x)ds=\partial^{j}_{t}\partial_{x}^{\ell}	\int\limits_0^t\mathcal{W}(\cdot;s,t)\ast h(s,\cdot)(x)ds=\partial_t^j\partial^{\ell}_{x}\int\limits_0^t\int\limits_{\mathbb{R}^n}\mathcal{W}(y;s,t)h(s,x-y)dyds\notag\\
	&=\partial_t^j\int\limits_0^t\int\limits_{\mathbb{R}^n}\mathcal{W}(y;s,t)\partial^{\ell}_{x}h(s,x-y)dyds=j\partial_t^{j-1}\partial^{\ell}_{x}h(t,x)+\int\limits_0^t\partial_t^j\int\limits_{\mathbb{R}^n}\mathcal{W}(y;s,t)\partial^{\ell}_{x}h(s,x-y)dyds\notag\\
	&=j\partial_t^{j-1}\partial^{\ell}_{x}h(t,x)+\int\limits_0^t\partial_t^{j-1}\int\limits_{\mathbb{R}^n}\partial_t\mathcal{W}(y;s,t)\partial^{\ell}_{x}h(s,x-y)dyds,\quad\forall t\in(0,\max K),\quad\forall x\in\mathbb{R}^n.
	\end{align}
By using part 2 in Proposition \ref{pr1} and  the fact that $\mathcal{L}_t$ is formally self-adjoint, we obtain
\begin{equation}
	\int\limits_{\mathbb{R}^n}\partial_t\mathcal{W}(y;s,t)\partial^{\ell} _{x}h(s,x-y)dy=\int\limits_{\mathbb{R}^n}\mathcal{W}(y;s,t)\mathcal{L}_t\partial^{\ell}_{x}h(s,x-y)dy.
\end{equation}
Repeating the process $j$ times, we observe 
$$
\partial_t^j\int\limits_0^t\int\limits_{\mathbb{R}^n}\mathcal{W}(y;s,t)\partial^{\ell}_{x}h(s,x-y)dyds=j\partial_t^{j-1}\partial^{\ell}_{x}h(t,x)+\int\limits_0^t\int\limits_{\mathbb{R}^n}\mathcal{W}(y;s,t)(\partial_t+\mathcal{L}_t)^j \partial^{\ell}_{x}h(s,x-y)dyds,
$$
$$
\forall t\in(0,\max K),\quad\forall x\in\mathbb{R}^n.
$$
Hence, based on the above discussion, it follows that
\begin{align}\label{wqinfty}
	\left\|\int\limits_0^\natural\mathrm{W}_{\sharp,\natural}h\right\|_{q,j,\ell,K}  &= \sup_{t\in K} \left\|\partial^{j}_{t}\partial_{x}^{\ell}	\int\limits_0^t\mathrm{W}_{s,t}h(s)ds\right\|_q\notag\\
	&= \sup_{t\in K} \left\|\partial^{\ell}_{x}\partial^{j}_{t}	\int\limits_0^t\int\limits_{\mathbb{R}^{n}}\mathcal{W}(y;s,t) h(s,\cdot-y)dyds\right\|_{q}\notag\\
	&= \sup_{t\in K} \left\|	\int\limits_0^t\int\limits_{\mathbb{R}^{n}}\mathcal{W}(y;s,t) (\partial_{t}+\mathcal{L}_{t})^{j}\partial^{\ell}_{x}h(s,\cdot-y)ds\right\|_{q}\notag\\
	&\leq \sup_{t\in K}	\int\limits_0^t\left\|\mathcal{W}(\cdot;s,t)\ast  (\partial_{t}+\mathcal{L}_{t})^{j}\partial^{\ell}_{x}h(s,\cdot)\right\|_{q}ds\notag\\
	&\leq \sup_{t\in K}	\int\limits_0^t\left\|\mathcal{W}(\cdot;s,t)\right\|_{1}\left\|  (\partial_{t}+\mathcal{L}_{t})^{j}\partial^{\ell}_{x}h(s,\cdot)\right\|_{q}ds\notag\\
	&= \sup_{t\in K}	\int\limits_0^t\left\|  (\partial_{t}+\mathcal{L}_{t})^{j}\partial^{\ell}_{x}h(s,\cdot)\right\|_{q}ds.
\end{align}
Furthermore, there exists a natural number $N_{\ell,j}\in\mathbb{N}$, multi-indices $,\{\nu_{\ell,j,k}\}_{k=1}^{N_{\ell,j}}$ and smooth functions $\{C_{\ell,j,k}^a\}_{k=1}^{N_{\ell,j}}$ depending on $a$ and its derivatives, such that
	\begin{equation}\label{expression}
		(\partial_t+\mathcal{L}_t)^j \partial^\ell_{x} h(s,x)=\sum_{k=1}^{N_{\ell,j}}C_{\ell,j,k}^a(t)\partial_x^{\nu_{\ell,j,k}}h(s,x),\quad\forall s,t\in\mathbb{R}_+,\quad\forall x\in\mathbb{R}^n.
	\end{equation}
	Plugging \eqref{expression} in \eqref{wqinfty}, one can find that
	\begin{align}
		&\left\|\int\limits_0^\natural\mathrm{W}_{\sharp,\natural}h\right\|_{q,j,\ell,K}\leq \sup_{t\in K}	\int\limits_0^t\left\|  (\partial_{t}+\mathcal{L}_{t})^{j}\partial^{\ell}_{x}h(s,\cdot)\right\|_{q}ds\leq  \sup_{t\in K}	\int\limits_0^t\left\| \sum_{k=1}^{N_{\ell,j}}C_{\ell,j,k}^a(t)\partial_x^{\nu_{\ell,j,k}}h(s,\cdot) \right\|_{q}ds\notag\\
		&\leq  \sup_{t\in K}\sum_{k=1}^{N_{\ell,j}}\int\limits_0^t\left|C_{\ell,j,k}^a(t)\right|\left\| \partial_x^{\nu_{\ell,j,k}}h(s,\cdot) \right\|_{q}ds\leq \sum_{k=1}^{N_{\ell,j}} \sup_{t\in K}\left|C_{\ell,j,k}^a(t)\right|\sup_{s\in [0,t]}\left\| \partial_x^{\nu_{\ell,j,k}}h(s,\cdot) \right\|_{q}t\notag\\
		&\leq \max K \sum_{k=1}^{N_{\ell,j}} \sup_{t\in K}\left|C_{\ell,j,k}^a(t)\right|\sup_{s\in [0,t]}\left\| \partial_x^{\nu_{\ell,j,k}}h(s,\cdot) \right\|_{q}\leq  \max K \sum_{k=1}^{N_{\ell,j}} \sup_{t\in K}\left|C_{\ell,j,k}^a(t)\right|\sup_{s\in [0,\max K]}\left\| \partial_x^{\nu_{\ell,j,k}}h(s,\cdot) \right\|_{q}\notag\\
		&\leq\max K \sum_{k=1}^{N_{\ell,j}} \sup_{t\in K}\left|C_{\ell,j,k}^a(t)\right|\|h\|_{q,\nu_{\ell,j,k},[\max K]+1}.	
	\end{align}
	
	Now, to prove \eqref{iiii1}, let $b\in\mathbb{N}$ and  $h\in C^\infty(\mathbb{R}_+,W^{q,\infty}(\mathbb{R}^n))\cap C([0,\infty),W^{q,\infty}(\mathbb{R}^n))$. Then, by Remark \ref{notation2}, one can see that
\begin{align}\label{finallq}
	\left\|\int\limits_0^\natural\mathrm{W}_{\sharp,\natural}h\right\|_{q,b}&= \sup_{t\in[0,b]}\left\| \left( 	\int\limits_0^\natural\mathrm{W}_{\sharp,\natural}h\right) (t)\right\| _{q}
	=\sup_{t\in[0,b]}\left\| \int\limits_0^t\mathrm{W}_{s,t}h(s)ds\right\| _{q}\notag\\
	&\leq \sup_{t\in[0,b]}\int\limits_0^t\left\| \mathrm{W}_{s,t}h(s)\right\| _{q}ds=\sup_{t\in[0,b]}\int\limits_0^t\left\| \mathcal{W}{(\cdot;s,t)}*h(s)\right\| _{q}ds\notag\\
	&\leq \sup_{t\in[0,b]}\int\limits_0^t\left\| \mathcal{W}{(\cdot;s,t)}\right\| _{1}\left\| h(s)\right\|_{q}ds= \sup_{t\in[0,b]}\int\limits_0^t\left\| h(s)\right\|_{q}ds\notag\\
	&\leq b\sup_{s\in[0,b]}\left\| h(s)\right\| _{q}=b\left\| h\right\|_{q,b}.
\end{align}
\end{proof}

\section*{Acknowledgements}
Z. Avetisyan and M. Ruzhansky acknowledge the support of  the Methusalem programme of the Ghent University Special Research Fund (BOF) (Grant number 01M01021). Z. Avetisyan's work is funded by the FWO Senior Research Grant G022821N.
Z. Keyshams and M. Mikaeili Nia acknowledge the support by the Higher Education and Science Committee, in the frames of the
project grant 25FAST-1A006.

\end{document}